\tikzset{join/.code=\tikzset{after node path={%
\ifx\tikzchainprevious\pgfutil@empty\else(\tikzchainprevious)%
edge[every join]#1(\tikzchaincurrent)\fi}}}
\tikzset{>=stealth',every on chain/.append style={join},
         every join/.style={->}}
\tikzset{
    >=stealth',
    punkt/.style={
           rectangle,
           rounded corners,
           draw=black, very thick,
           text width=6.5em,
           minimum height=2em,
           text centered},
    pil/.style={
           ->,
           thick,
           shorten <=2pt,
           shorten >=2pt,}
}
\newcommand{\BB}{\mathbb}
\def\tf{t_{\infty}}
\newcommand{\bea}{\begin{eqnarray}}
\newcommand{\eea}{\end{eqnarray}}
\newcommand{\nn}{\nonumber}
\newcommand{\opn}{\operatorname}
\newcommand{\bra}{\langle}
\newcommand{\ket}{\rangle}
\newcommand{\im}{\opn{Im}}
\newcommand{\re}{\opn{Re}\,}
\newcommand{\To}{\Rightarrow}
\def\Gd{\Delta}
\def\ep{\epsilon}
\def\gt{\theta}
\def\vgt{\vartheta}
\def\gs{\sigma}
\def\Gs{\Sigma}
\def\vgs{\varsigma}
\def\ep{\epsilon}
\def\gl{\lambda}
\def\Go{\Omega}
\def\go{\omega}
\DeclareMathAlphabet{\mathpzc}{OT1}{pzc}{m}{it}
\newtheorem{theorem}{Theorem}[section]
\newtheorem{lemma}[theorem]{Lemma}
\newtheorem{proposition}[theorem]{Proposition}
\theoremstyle{definition}
\newtheorem{example}[theorem]{Example}
\newtheorem{remark}[theorem]{Remark}
\newtheorem{corollary}[theorem]{Corollary}
\newtheorem{definition}[theorem]{Definition}
\numberwithin{equation}{section}
\begin{document}
\begin{flushright} \small
UUITP-27/19
\end{flushright}
\smallskip

\begin{center}
\Large Generalised K\"ahler Structure on $\BB{C}P^2$ and Elliptic Functions
\end{center}

\begin{center}
F. Bonechi\footnote{\small INFN Sezione di Firenze, email: francesco.bonechi at fi.infn.it},
J. Qiu\footnote{\small Matematiska institutionen, Institutionen f\"or fysik och astronomi, email: jian.qiu at math.uu.se},
and
M. Tarlini\footnote{\small INFN Sezione di Firenze,  email: marco.tarlini at fi.infn.it}
\end{center}



\begin{abstract}
We construct a toric generalised K\"ahler structure on $\BB{C}P^2$ and show that the various structures such as the complex structure, metric etc are expressed in terms of certain elliptic functions. We also compute the generalised K\"ahler potential in terms of certain integral of elliptic functions.
\end{abstract}

\tableofcontents

\section{Introduction}
We give a biased introduction to generalised K\"ahler structure based on physics applications.
Generalised K\"ahler or Calabi-Yau geometry \cite{GCY} \cite{Gualtieri2014} arise from the study of supersymmetric sigma models or super-string compactifications.
For example, by placing a super-string theory on $\BB{R}^{1,3}\times X$ where $X$ is a Calabi-Yau 3-fold, one can obtain a gauge theory on $\BB{R}^{1,3}$ that preserves $N=2$ supersymmetry. For a sigma model based on mappings from a Riemann surface $\Gs$ to a manifold $M$, Zumino showed that one can obtain $N=2$ supersymmetry if $M$ is K\"ahler \cite{ZUMINO1979203}. But string theory has an important duality called the $T$-duality (torus duality) that relates the IIA to the IIB super-string and is the local model for mirror symmetry \cite{Strominger:1996it}. However the two integral ingredients, the K\"ahler geometry and T-duality are not compatible. The problem is that K\"ahler geometry is characterised by the covariant constancy of the complex structure under the Levi-Civita connection. But the T-duality will generate a Neveu–Schwarz 3-form $H$. This 3-form modifies the Levi-Civita connection, making it torsionful \cite{STROMINGER1986253,HULL1986357}.
So a more appropriate notion of generalised K\"ahler structure is formulated as the constancy of complex structure under a torisonful connection $\nabla^H$ (which we will recall in the next section). As for the sigma models, it turns out that Zumino's result can be generalised \cite{GHR} to accommodate a bi-hermitian geometry with torsion connection.

The local geometry of generalised K\"ahler structure (GKS) has been well understood and used to construct supersymmetric sigma models mentioned above \cite{Lindstrom:2005zr}. The generalised K\"ahler potential (GKP) was used to write the action of such models, but the drawback was that the formulation only works at regular points and a more global understanding of the GKP was lacking.
A more recent work \cite{Bischoff:2018kzk} partially answered this question for GKS of the symplectic type, where the GKP was formulated in terms of Mortia equivalence of certain holomorphic Poisson structures.

In \cite{Hitchin_biherm} Hitchin gave a construction of GKS on Del Pezzo surfaces using a flow that deforms the original K\"ahler structure on these surfaces.
In general, it can be hard to solve a flow equation due to its non-linear nature. In this work we restrict ourselves to the toric Del Pezzo surfaces, where thanks to the toric symmetry, the flow equations greatly simplify. We show that on $\BB{C}P^2$ the flow is solved by the Weierstrass elliptic functions. To present the solution we use the coordinate system $c_3,s$ to parametrise the triangle which is the moment map polygon for $\BB{C}P^2$, see fig.\ref{fig_contour_intro}. If $y^{1,2}$ are the Euclidean coordinates of $\BB{R}^2$, the triangle is the region $1\geq y^{1,2}\geq 0$, $y^1+y^2\leq 1$, we have
  \bea y^k(c_3,s)=\frac{c_3}{\frac{1}{12}-\wp(\go_2+\frac{\go_1}{\pi}(s+\frac{2k\pi}{3}))},\nn\eea
where $\wp$ is the Weierstrass elliptic function satisfying
\bea&\displaystyle \dot{\wp}^2=4\wp^3-g_2\wp-g_3,\nn\\
&\displaystyle g_2=\frac{1}{12}-2c_3,~~~g_3=\frac{c_3}{6}-\frac{1}{216}-c_3^2\nn\eea
and with half-periods $\go_{1,2}$ depending on $c_3$.
\begin{figure}[h]
\begin{center}
\includegraphics[width=3.8cm]{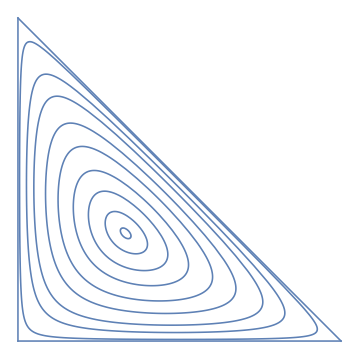}
\caption{The contours are controlled by $c_3$ while $s$ runs along each contour clockwise with period $2\pi$.}\label{fig_contour_intro}
\end{center}
\end{figure}
With the $c_3,s$ coordinates, the flow is simply a shift
\bea \phi_{\Gd t}:~s\to s+\frac{\pi\Gd t}{\go_1}\nn\eea
with $\Gd t$ the flow time. Our main result is that (the details are in props.\ref{prop_main}, \ref{prop_GKP} and \ref{prop_F})
\begin{theorem}
  On $\BB{C}P^2$, Hitchin's construction gives a Generalised K\"ahler structure with two toric invariant complex structures. The first one $I_-$ is the standard one while $I_+$ is deformed from $I_-$ by a flow $\phi_{\Gd t}$: $I_+=(\phi_{\Gd t*})^{-1}I_-\phi_{\Gd t*}$, concretely
  \bea
  && (I_+)^{\gt_i}_{~\,s}=(I_-)^{\gt_i}_{~s}\Big|_{s+\frac{\pi\Gd t}{\go_1}},~~(I_+)^{\gt_i}_{~\,c_3}=(I_-)^{\gt_i}_{~c_3}\Big|_{s+\frac{\pi\Gd t}{\go_1}}+\frac{3\tilde\eta_1\pi\Gd t}{\go_1^2c_3(1-27c_3)}(I_-)^{\gt_i}_{~s}\Big|_{s+\frac{\pi\Gd t}{\go_1}}\nn\\
  && (I_+)^s_{~\gt_i}=(I_-)^s_{~\gt_i}\Big|_{s+\frac{\pi\Gd t}{\go_1}}-\frac{3\tilde\eta_1\pi\Gd t}{\go_1^2c_3(1-27c_3)}(I_-)^{c_3}_{~\gt_i}\Big|_{s+\frac{\pi\Gd t}{\go_1}},~~(I_+)^{c_3}_{~\gt_i}=(I_-)^{c_3}_{~\gt_i}\Big|_{s+\frac{\pi\Gd t}{\go_1}}\nn\eea
  where the coordinate we use for $\BB{C}P^2$ are the two angles $\gt_{1,2}$ in addition to the $c_3,s$.
  The quantity $\tilde\eta_1$ is introduced in \eqref{eta_tilde} and depends only on $c_3$.

  The complex structure $I_+$ is valid for arbitrary flow time $\Gd t$, but the bi-hermitian metric is positive definite for small but non-zero $\Gd t$.
  The generalised K\"ahler potential is given by
  \bea
  & \displaystyle{K=\frac{\Gd t}{4}\log c_3+\frac{3\go_1}{8\pi}\int_0^{\Gd t\pi/\go_1}du\sum_{k=1}^3y^k(c_3,s+u)\log\frac{y^k(c_3,s+u)}{y^k(c_3,s)}}
.\nn\eea
The first term is proportional to the Fubini-Study K\"ahler potential.
\end{theorem}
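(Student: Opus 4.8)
The plan is to carry out the whole construction in Guillemin--Abreu action-angle coordinates adapted to the torus action, so that every tensor descends to the moment triangle and is encoded by a symplectic potential on it. First I would set up Hitchin's flow in this language: fix the Fubini--Study symplectic form $\omega$ and the standard complex structure $I_-$ on $\BB{C}P^2$, and take $\phi_{\Gd t}$ to be the flow of diffeomorphisms supplied by Hitchin's construction, generated by the toric-invariant Hamiltonian associated with the anticanonical holomorphic Poisson structure and hence a symplectomorphism of $(\BB{C}P^2,\omega)$. Being a symplectomorphism, the conjugate $I_+=(\phi_{\Gd t*})^{-1}I_-\phi_{\Gd t*}$ is automatically $\omega$-compatible; toric invariance of the generating function forces $\phi_{\Gd t}$ to fix the angles $\gt_{1,2}$ and to move only the base, reducing the flow to an ODE on the triangle.

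The second step is to integrate that ODE. Since an anticanonical divisor of $\BB{C}P^2$ is a cubic, hence elliptic, curve, I expect the reduced Hamiltonian to be cubic in the moment coordinates, with level sets the contours of fig.\ref{fig_contour_intro}: its conserved value is $c_3$ and the flow parameter along a level set is $s$. I would then verify directly that the substitution $y^k=c_3/(\tfrac{1}{12}-\wp(\go_2+\tfrac{\go_1}{\pi}(s+\tfrac{2k\pi}{3})))$ linearises the flow to the shift $s\mapsto s+\pi\Gd t/\go_1$, by checking that $\wp$ solves the Weierstrass equation with the stated $g_2,g_3$ (matching the invariants of the cubic to $c_3$). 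Once the flow is a pure shift, the component formulas for $I_+$ follow from conjugating the matrix of $I_-$ by the Jacobian of $\phi_{\Gd t}$: the diagonal blocks are simply $I_-$ re-evaluated at $s+\pi\Gd t/\go_1$, while the off-diagonal corrections appear only in the entries sensitive to $\partial s/\partial c_3$, which equals $-\pi\Gd t\,\go_1'/\go_1^2$. Evaluating $d\go_1/dc_3$ by the chain rule through the $c_3$-dependence of $g_2,g_3$ together with the standard variation-of-periods identity (which expresses the period derivative through the quasi-period $\tilde\eta_1$) produces the coefficient $3\tilde\eta_1\pi\Gd t/(\go_1^2 c_3(1-27c_3))$.

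Next I would reconstruct the bi-hermitian metric $g$ from the generalised Kähler data determined by $\omega$ and the pair $I_\pm$, as the metric Hermitian with respect to both complex structures. At $\Gd t=0$ one has $I_+=I_-$ and $g$ reduces to the Fubini--Study metric, so positivity for small $\Gd t$ should follow by continuity on the compact triangle. The hard part will be controlling this uniformly up to the edges and vertices, where the torus orbits collapse and the action-angle frame degenerates; there I must use the $y^k\to 0$ expansions of the elliptic data to show that $g$ extends smoothly and stays positive, and this boundary analysis is precisely where positivity is eventually lost as $\Gd t$ grows and the contours approach the degeneration locus of the cubic (signalled by $c_3(1-27c_3)\to 0$). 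I regard this as the main obstacle of the proof.

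Finally, for the generalised Kähler potential I would use that $K$ is the generating function relating the symplectic potentials of $I_-$ and $I_+$, so that its derivative along the flow equals the infinitesimal generating function of $\phi_u$. The toric Kähler metric carries the Guillemin symplectic potential $\tfrac12\sum_k y^k\log y^k$, whose difference along the flow furnishes exactly the $y\log y$ integrand; integrating from $u=0$ to $u=\Gd t\pi/\go_1$ and inserting the explicit $y^k(c_3,s)$ yields $\tfrac{3\go_1}{8\pi}\int_0^{\Gd t\pi/\go_1}du\sum_{k=1}^3 y^k(c_3,s+u)\log\frac{y^k(c_3,s+u)}{y^k(c_3,s)}$, while the residual piece integrates out to the Fubini--Study term $\tfrac{\Gd t}{4}\log c_3$. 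To close the argument I would differentiate $K$ under the integral sign and use the Weierstrass equation once more to confirm that it reproduces $I_\pm$ and $g$ through the standard second-derivative relations of the local generalised Kähler potential theory.
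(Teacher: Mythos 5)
Your outline gets the elliptic-function part right (reducing the flow to $\dot y^1=y^1(y^2-y^3)$ and cyclic, verifying the Weierstrass substitution, and obtaining the off-diagonal corrections to $I_+$ from $\partial_{c_3}\go_1=-3\tilde\eta_1/(c_3(1-27c_3))$ is exactly what the paper does), but two of your structural claims are wrong. First, the flow $\phi_{\Gd t}$ is \emph{not} a symplectomorphism of the Fubini--Study form $\go=\sum dy^i\wedge d\gt_i$: the generating vector field is $V=Q^{\#}dh$ with $Q=4\im\gs$ the Hitchin Poisson tensor and $h=-\tfrac14\log(y^1y^2y^3)$, so $\phi_{\Gd t}$ preserves $Q$, not $\go$ (a one-line computation of $L_V\go=d\iota_V\go$ with $V=y^1(y^2-y^3)\partial_{y^1}+y^2(y^3-y^1)\partial_{y^2}$ shows it does not vanish). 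Consequently "$I_+$ is automatically $\go$-compatible because $\phi$ is a symplectomorphism" is not a valid step; the symplectic form entering the generalised K\"ahler data of symplectic type is the $t$-dependent $F=d\int_0^{\Gd t}\phi_t^*(I_-^*dh)$, determined by $I_+-I_-=-QF$, and compatibility has to be established through that relation.

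Second, and more seriously, your positivity argument fails at the start: at $\Gd t=0$ the bi-hermitian metric is \emph{identically zero}, not the Fubini--Study metric. In Hitchin's construction $g=Q^{-1}[I_-,I_+]=FI_--I_-^*F$, which vanishes when $I_+=I_-$, so continuity from $\Gd t=0$ gives you nothing; this is precisely why the theorem requires $\Gd t$ small \emph{but non-zero}. The actual argument is that $I_-^*\dot g\big|_{t=0}=4i\partial\bar\partial h$ is the curvature of the anticanonical bundle, positive because $-K$ is ample on $\BB{C}P^2$, and positivity is an open condition. Your proposed boundary analysis near the edges of the triangle is not where the difficulty lies. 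Finally, the generalised K\"ahler potential is not obtained by differencing Guillemin symplectic potentials: in the paper it arises as the generating function of the Lagrangian $L_{\Gd t}=\hat\phi_{\Gd t}(L_0)$ inside the symplectic groupoid $(X,\Go_0)$ integrating $\gs_-$, written as a graph $P_i=-2\partial_{Q^i}K$ in Darboux coordinates, and the $\sum_k y^k\log(y^k_t/y^k_0)$ integrand comes from explicitly integrating $\iota_V\xi$ and $\iota_Vd\xi$ for $\xi=\log|z^{[1}|^2\,d\log|z^{2]}|^2$ along the flow; your heuristic would need to be replaced by that computation (or an equivalent one) to pin down the normalisation and the cross terms.
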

We leave the further exploration of our explicit results, such as the relation to certain integrable models \cite{ruijsenaars1987} as well as mirror symmetry for a future work.

\bigskip
{\bf Acknowledgements:} We thank Maxim Zabzine for help over many issues in the draft. Our paper has some overlap with the work of Francis Bischoff and Marco Gualtieri, and we thank the former for sharing with us his thesis \cite{Bischoff} and both for many interesting discussions. The work of J.\,Q is supported in part by the grant  "Geometry and Physics"  from the Knut and Alice Wallenberg foundation.

\section{Generalised K\"ahler structure}\label{sec_GKS}
We will focus on 4-manifolds in this work. In this case the generalised K\"ahler structure (GKS) is equivalent to the bi-hermitian geometry $(M^4,g,I_{\pm})$, i.e. a Riemannian manifold with metric that is hermitian with respect to both (integrable) complex structures $I_{\pm}$.
Recall that for any (integrable) complex structure $J$ and a hermitian metric, there is a connection with torsion $\nabla^H$ with $\nabla^HJ=0$, where $\nabla^H$ is defined as
\bea \bra Z,\nabla^H_XY\ket=\bra Z,\nabla_XY\ket+ H(X,Y,Z).\nn\eea
The 3-form $H$ can be expressed in terms of $\go=J^*g$ as
\bea H=\frac12 d^c\go=\frac{i}{2}(\bar\partial\go-\partial\go).\nn\eea
We have now two complex structures and so two Hermitian 2-forms $\go_{\pm}=I_{\pm}^*g$, and two 3-forms $H_{\pm}$ constructed from $\go_{\pm}$ using the two complex structures $I_{\pm}$. It is proved in \cite{Biherm_cplx_surf} that if the first Betti number is even then $H_+=-H_-$, and so we will write $H$ for $H_+$ and $\nabla^{\pm}$ for the connection with torsion $\nabla^{\pm H}$. To summarise, a GKS consists of a bi-hermitian metric $(g,I_{\pm})$ with the covariant constancy condition
\bea \nabla^{\pm}I_{\pm}=0.\nn\eea

\subsection{Morita equivalence between holomorphic Poisson structures}
What played a crucial role in \cite{Bischoff:2018kzk} was the formulation of the GKS in terms of holomorphic Poisson structures.
One starts with the Hitchin's Poisson structure
\bea Q:=[I_+,I_-]g^{-1}.\label{Hitchin_Poisson}\eea
It can be shown that this is (0,2)+(2,0) with respect to both $I_{\pm}$. From $Q$ one can construct two Poisson structures
\bea \gs_{\pm}:=\frac{1}{4}(I_{\pm}Q+iQ),\label{two_poisson}\eea
which are (2,0) and holomorphic under $I_{\pm}$ respectively.
The authors of \cite{Bischoff:2018kzk} considered the GKS of symplectic type, we refer to \cite{Bischoff:2018kzk} for the details of this definition, while we only record what would be important for this work. One has a symplectic form $F$, with the following relations to $I_{\pm}$ 
\bea &I_+-I_-=-QF,\label{GKS_I}\\
&FI_++I^*_-F=0=I^*_-F+FI_+.\label{GKS_II}\eea
The solution to such a non-linear pair of equations is encoded in terms of the Morita equivalence of $\gs_{\pm}$ as holomorphic Poisson structures. We recall its definition following Ping Xu, adapted to the holomorphic setting
\begin{definition}\label{def_Ping_Xu}(Def 2.1 \cite{xu1991})
  The two holomorphic Poisson structures $(M,\gs_{\pm})$ are Morita equivalent if there
  is a holomorphic symplectic manifold $(X,J,\Go)$ (called the equivalence bi-module) and two maps ${\tt s}$, ${\tt t}$
\bea \begin{tikzpicture}
  \matrix (m) [matrix of math nodes, row sep=1.8em, column sep=1.5em]
    {   & (X,J,\Go) &  \\
       (M,I_+,\gs_+) & & (M,I_-,-\gs_-)  \\ };
  \path[->]
  (m-1-2) edge node[left] {\small${\tt t}$} (m-2-1)
  (m-1-2) edge node[right] {\small${\tt s}$} (m-2-3)  ;
  \end{tikzpicture}\nn\eea
 with the following properties. The source map ${\tt s}$ is holomorphic and anti-Poisson, i.e. ${\tt s}_*\Go^{-1}=-\gs_-$, while the target map ${\tt t}$ is holomorphic and Poisson i.e. ${\tt t}_*\Go^{-1}=\gs_+$. Besides, the fibre of $\tt{s},{\tt t}$ is connected and simply connected. One requires further that $\ker {\tt}_*$ be symplectic orthogonal complement to $\ker{\tt s}_*$.
\end{definition}
Here complete means that ${\tt s}^*f$ generates a complete Hamiltonian vector field (its flow time extendable to $(-\infty,\infty)$) if the function $f$ on $M$ generates under $\gs_-$ a complete vector field. The same goes for $\gs_+$ and ${\tt t}^*f$.
The symplectic orthogonality of $\ker {\tt s}_*$ and $\ker {\tt t}_*$ implies
\bea {\tt t}_*\Go^{-\#}{\tt s}^*={\tt s}_*\Go^{-\#}{\tt t}^*=0.\nn\eea
This ensures that the hamiltonian vector field generated by a function of type ${\tt t}^*h$ with $h\in C^{\infty}(M)$ has no effect on the image of the source map (and vice versa).
Finally that ${\tt s}$ and ${\tt t}$ are holomorphic means that the two complex structures $I_{\pm}$ are 'unified' as one single complex structure up on $X$.

The Morita equivalence is an equivalence relation between integrable Poisson manifolds. In particular, integrable Poisson manifolds are self Morita equivalent with self-equivalence bi-module given by the symplectic groupoid, which we will need next.

In the setting of GKS of symplectic type, this Morita equivalence between $(M,\gs_{\pm})$ can be constructed from deforming a self-Morita equivalence of $(M,\gs_-)$. Let $(X,J,\Go_0)$ be a holomorphic symplectic groupoid integrating the Poisson structure $\gs_-$, i.e. a holomorphic symplectic
manifold $(X,J,\Go_0)$ with maps
\bea \begin{tikzpicture}
  \matrix (m) [matrix of math nodes, row sep=2em, column sep=1.5em]
    {   & (X,J_0,\Go_0) &  \\
       (M,I_-,\gs_-) & & (M,I_-,-\gs_-)  \\ };
  \path[->]
  (m-1-2) edge node[left] {\small${\tt t}$} (m-2-1)
  (m-1-2) edge node[right] {\small${\tt s}$} (m-2-3)  ;
  \end{tikzpicture}\nn\eea
satisfying identical conditions as in def.\ref{def_Ping_Xu} by replacing $I_+,\,\gs_+$ with $I_-,\,\gs_-$.
One has again a similar orthogonality
\bea {\tt t}_*(\Go_0^{-1})^{\#}{\tt s}^*={\tt s}_*(\Go_0^{-1})^{\#}{\tt t}^*=0.\label{orthog_s_t}\eea
It is helpful to keep in mind that $J_0$ can be expressed as
\bea J_0=(\im\Go_0)^{-1}\re\Go_0\label{J_0_Omega}\eea
and that the inverse of $\Go_0$ (as a holomorphic 2-form) is
\bea \Go_0^{-1}=\frac14((\re\Go_0)^{-1}-i(\im\Go_0)^{-1}).\label{used_IX}\eea

It turns out that a simple deformation $\Go_0\to \Go=\Go_0+{\tt t}^*F$ gives us the Morita equivalence between $\gs_{\pm}$.
For simplicity of discussion we assume that ${\tt t}^*F$ is such that $\re\Go_0+t^*F$ remains invertible. First the kernel of $\Go$ defines an distribution $\bar L\subset T_{\BB{C}}X$ which is integrable since $\Go$ is closed. The invertibility of $\re\Go_0+{\tt t}^*F$ ensures that $L\cap \bar L=0$ and $L\oplus \bar L=T_{\BB{C}}X$. This means that $L$ is spanned by the (1,0) vector fields in $T_{\BB{C}}X$ but under a different complex structure $J\neq J_0$. The new complex structure can be expressed in the same way as \eqref{J_0_Omega}
\bea J=(\im\Go)^{-1}\re\Go=(\im\Go_0)^{-1}(\re\Go_0+{\tt t}^*F)=J_0+(\im\Go_0)^{-1}{\tt t}^*F.\label{J_Omega}\eea
The source map ${\tt s}$ remains holomorphic $(X,J)\to (M,I_-)$, indeed for any $V\in TX$
\bea {\tt s}_*JV={\tt s}_*J_0V+{\tt s}_*(\im\Go_0)^{-\#}{\tt t}^*F{\tt t}_*V=I_-{\tt s}_*V\nn\eea
where the second term drops thanks to \eqref{orthog_s_t}. In contrast, for the ${\tt t}$ map we have
\bea {\tt t}_*JV={\tt t}_*(J_0+(\im\Go_0)^{-\#}{\tt t}^*F)V=I_-{\tt t}_*V+{\tt t}_*(\im\Go_0)^{-\#}{\tt t}^*F{\tt t}_*V
=(I_--QF){\tt t}_*V,\nn\eea
where we have used the fact that ${\tt t}$ is a Poisson map sending $\Go_0^{-1}$ to $\gs_-=(I_-Q+iQ)/4$. The last equation says that ${\tt t}_*$ now intertwines $J$ with a certain automorphism $I_+$ of $TM$ with $I_+=I_--QF=I_-(1+I_-QF)$. This gives \eqref{GKS_I}.

Furthermore, that the lhs of \eqref{J_Omega} squares to $-1$ gives
\bea & J_0(\im\Go_0)^{-1}{\tt t}^*F+(\im\Go_0)^{-1}{\tt t}^*FJ_0+((\im\Go_0)^{-1}{\tt t}^*F)^2=0,\nn\\
& \To~~J_0^*{\tt t}^*F+{\tt t}^*FJ_0+{\tt t}^*F(\im\Go_0)^{-1}{\tt t}^*F=0,\nn\\
&\stackrel{\textrm{Apply }{\tt t}_*}{\To}~~I_-^*F+FI_--FQF=0\nn\eea
which is \eqref{GKS_II}. This also implies $(I_+)^2=-1$.

The Poisson property for ${\tt s}$ is unaffected, as for ${\tt t}$
\bea {\tt t}_*(\Go^{-1})=\frac14{\tt t}_*((\re\Go_0+{\tt t}^*F)^{-1}-i\im\Go_0^{-1})=\frac14({\tt t}_*(\re\Go_0+{\tt t}^*F)^{-1}+iQ).\nn\eea
To evaluate the first term we can use a geometric series expansion \cite{BursztynRadko} and get
\bea {\tt t}_*(\re\Go_0+{\tt t}^*F)^{-1}=(1+I_-QF)^{-1}I_-Q\nn\eea
provided $1+I_-QF$ is invertible. So we get
\bea {\tt t}_*(\Go^{-1})=\frac14((1+I_-QF)^{-1}I_-Q+iQ)=\frac14(I_+Q+iQ)=\gs_+\nn\eea
using \eqref{GKS_I}. This shows that ${\tt t}_*$ maps $\Go^{-1}$ to $\gs_+$.

\subsection{Encoding the generalised K\"ahler potential}
The Morita equivalence between $\gs_{\pm}$ encodes the generalised K\"ahler potential (GKP) as a kind of generating function (see  sec.4. of \cite{Bischoff:2018kzk}). Recall that just as the usual K\"ahler potential is not a globally defined function, the GKP is defined locally, and so the following discussion is local in nature.

From the previous section, one has a holomorphic symplectic form $\Go=\Go_0+{\tt t}^*F$, which can be expressed using local Darboux coordinate as $\Go=i\sum_kdP_k\wedge dQ^k$ where $Q_k,P^k$ are local holomorphic coordinates. As the Morita equivalence $(X,\Go,J)$ was constructed out of a symplectic groupoid $(X,\Go_0,J_0)$ integrating $\gs_-$, the space of units is a Lagrangian $L_0$ with respect to $\Go_0$. Though $L_0$ is no longer Lagrangian under $\Go$, one still has $\im\Go\big|_{L_0}=0$ since $F$ is real. Suppose that locally in the $P,Q$ coordinate system $L_0$ is given by $P_k=\eta_k(Q,\bar Q)$ where $\eta=\eta_k dQ^k$ is a 1-form, then one has
\bea d(i\eta_k dQ^k)=\Go|_{L_0}={\tt t}^*F.\nn\eea
This forces $\im i\eta$ to be a closed 1-form and hence $\re\eta=-d K$ locally for some real function $K(Q,\bar Q)$, as a consequence
\bea \eta=-2\partial K.\nn\eea
The local real function $K$ is the GKP.

As an example, for an actual K\"ahler manifold $(M,\go,I)$, one has $\gs_{\pm}=0$ and $I_+=I_-=I$. Then the Morita equivalence given simply by the holomorphic cotangent bundle $X=T^{(1,0)}M$ and $\Go_0$ is the standard holomorphic symplectic form. The maps ${\tt s}$, ${\tt t}$ are just the bundle projection $\pi$. The holomorphic symplectic form is then deformed to $\Go_0+\pi^*\go$, for which we can pick the Darboux coordinates as follows. Let $Q^k=x^k$ be the local holomorphic coordinates for $M$ and $\xi_k$ the local fibre coordinate, then $P_k=\xi_k-2\partial_kK$ with $K$ being the K\"ahler potential in the usual sense, i.e. $2i\partial\bar\partial K=\go$. We can easily check
\bea idP_k\wedge dQ^k=id(\xi_kdx^i-2\partial K)=id\xi_k\wedge dx^k-2i\bar\partial\partial K=\Go_0+\go.\nn\eea
Then the zero section $L_0=M$ is given by the condition $\xi_i=0$ or $P_i=-2\partial_iK$, which is how the GKP is formulated in the last paragraph.

\section{The Hitchin construction}
The pervading theme of the construction of the last section is that one starts from a K\"ahler manifold with $I_-$ as its complex structure, and one obtains $I_+$ as a deformation of $I_-$. In \cite{Hitchin_biherm} Hitchin used a flow to deform $I_-$ so that for small flow time, one gets a 4D bi-hermitian metric. Note that in this construction the flow time is small but cannot be zero.

\subsection{Overview}\label{sec_overview}
Let us give a quick overview of Hitchin's construction, while some proofs are collected in the next section.
One starts with a K\"ahler surface $(M,I_-,\gs_-)$, with a choice of a holomorphic Poisson tensor $\gs_-$ i.e. any element of $H^0(M,K^{-1})$ with $K$ the canonical bundle. The key observation is that the two holomorphic Poisson structures in \eqref{two_poisson} encoding the GKS share the same imaginary part $Q$, and so in deforming $I_-$ to reach $I_+$ one has to preserve $Q$.
It is thus natural to use a $Q$-Hamiltonian flow $\phi_t$ with $\phi_0=id$ generated by some Hamiltonian $h$ to deform $I_-$ so that
\bea I_+=(\phi_{t*})^{-1}I_-\phi_{t*}\label{I_t}\eea
is the new complex structure. The difficulty is to produce a positive definite bi-hermitian metric.

Hitchin picked the hamiltonian for the flow $h\sim\log ||\gs_-||^2$. Even though $h$ is ill-defined where $\gs_-$ vanishes, the hamiltonian vector field
\bea V=Q^{\#}dh\nn\eea
is globally defined, as demonstrated in \cite{Hitchin_biherm}. This is essentially because $Q$ is zero whenever $h$ diverges so that $V$, although not hamiltonian, is Poisson and its flow certainly preserves $Q$.

Take the inverse of $\gs_-$, considered as a meromorphic (2,0)-form, and denote with
$\go_-=\re(\gs_-^{-1})$. It is of course (2,0)+(0,2) with respect to $I_-$. Also its pull-back $\go_+=\phi_t^*\go_-$ is by construction $(2,0)+(0,2)$ with respect to $I_+$. But we consider its (1,1)-component with respect to $I_-$
\bea g=-I_-^*\go_++\go_+I_-.\label{Hitchin_metric}\eea
This 2-tensor turns out to be well defined (even though $\go_+$ is not), and symmetric (1,1) with respect to both $I_{\pm}$. But it may not be positive definite. To ensure its positivity, Hitchin studied the first derivative of $g$ with respect to $t$ and showed that
\bea I_-^*\dot g\big|_{t=0}=4i\partial \bar\partial h,\label{positivity}\eea
where $\partial$, $\bar\partial$ use the complex structure $I_-$.
But the rhs is the curvature of the line bundle $K^{-1}$, thus if the anti-canonical bundle is ample, then the curvature is positive definite. Since positivity is an open condition, for small but nonzero $t$ one has constructed a bi-hermitian metric.

\subsection{Collection of some formulae}
For clarity we temporarily denote with $I=I_-$ and $I_t=I_+$. The flow $\phi_t$ is generated by the vector field $V=Q^{\#}dh$, and so $I_t=I_+$ given in \eqref{I_t} can be written as
\bea I_t=I+\int_0^t dt \dot I_t=I+\int_0^tdt (\phi_{t*})^{-1}(L_VI)\phi_{t*},\nn\eea
while $L_VI$ can be written as $L_VI=-Qd(I^*dh)$ which we prove in lem.\ref{lem_used_II}.
Comparing with \eqref{GKS_I} one has
\bea -QF=I_t-I=-\int_0^tdt (\phi_{t*})^{-1}(Qd(I^*dh))\phi_{t*}=-Q\int_0^tdt (\phi_{t*})^{-1}d(I^*dh)\phi_{t*},\nn\eea
since $Q$ is preserved by $\phi_t$. So we get the formula for $F$
\bea F(t)=d\int_0^tdt \phi^*_t(I^*dh).\label{F}\eea
The first time derivative of $F(t)$ is
\bea \dot F(0)=dI^*dh=2i\partial\bar\partial h.\label{used_V}\eea

As for the metric, denote with $\gs_-=\gs$ and $\gs_+=\gs_t=(\phi_{-t})_*\gs$. Let $\go_+=\go_t=\re(\gs_t^{-1})$, and set as in \eqref{Hitchin_metric}
\bea g_t(X,Y)=-\go_t(IX,Y)+\go_t(X,IY).\nn\eea
At flow time $t=0$, $\go_0$ is (2,0)+(0,2) with respect to $I$ and so $g$ is zero. This is crucial for $g$ to be globally defined for $t>0$, since the divergence of $\go_t$ resides only in the (2,0)+(0,2) component.
We observe that
\bea g_t(X,Y)=-\go_t(IX,Y)+\go_t(X,IY)
=Q^{-1}(IX,I_tY)-Q^{-1}(I_tX,IY)\nn\eea
where we used $\go_t=\re(\gs_t^{-1})=-Q^{-1}I_t$ and that $Q$ is $(0,2)+(2,0)$ w.r.t. $I$.
We see from the last formula that $g_t$ is a symmetric tensor and
\bea g=Q^{-1}[I,I_t]\nn\eea
c.f. \eqref{Hitchin_Poisson}. We omit the subscript $t$ in $g$ next. We see a democracy between $I_t$ and $I$, thus if $g$ is (1,1) under $I$ so will it be under $I_t$.
Finally that $g$ is well-defined comes from the relation $I_t-I=-QF$
\bea g=Q^{-1}[I,-QF]=FI-I^*F.\nn\eea
Taking $t$-derivative at $t=0$ and using \eqref{used_V}, we get the positivity \eqref{positivity}.
\begin{lemma}\label{lem_used_II}
  \bea L_VI=-Qd(I^*dh).\label{used_II}\eea
\end{lemma}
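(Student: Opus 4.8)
\emph{Proof plan.} Both sides of \eqref{used_II} are genuine endomorphisms of $TM$: the left side is the Lie derivative of the $(1,1)$-tensor $I$, while on the right $Q$ is a bivector and $d(I^*dh)$ a $2$-form, so their contraction $\big(Q\,d(I^*dh)\big)^i{}_j=Q^{ik}\,(d(I^*dh))_{kj}$ is again a $(1,1)$-tensor. Since the claimed equality is tensorial, it suffices to verify it in a single chart, and since $I=I_-$ is integrable I will work in holomorphic coordinates $z^a,\bar z^{\bar a}$ in which $I$ has the constant components $I^a{}_b=i\delta^a_b$, $I^{\bar a}{}_{\bar b}=-i\delta^{\bar a}_{\bar b}$ and vanishing mixed entries.

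First I would compute $L_VI$ from the invariant formula $(L_VI)(X)=[V,IX]-I[V,X]$, or equivalently from $(L_VI)^i{}_j=V^k\partial_kI^i{}_j-(\partial_kV^i)I^k{}_j+(\partial_jV^k)I^i{}_k$. Because $I$ has constant components the first term drops. A short check then shows the ``pure-type'' blocks cancel identically, $(L_VI)^a{}_b=-i\partial_bV^a+i\partial_bV^a=0$ and likewise $(L_VI)^{\bar a}{}_{\bar b}=0$, so that the only surviving entries are the mixed ones
\[
(L_VI)^a{}_{\bar b}=2i\,\partial_{\bar b}V^a,\qquad (L_VI)^{\bar a}{}_{b}=\overline{(L_VI)^a{}_{\bar b}} .
\]
This is just the familiar statement that $L_VI$ measures the failure of the $(1,0)$-part of $V$ to be holomorphic.

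Next I would insert $V=Q^\#dh$. Writing $dh=\partial_c h\,dz^c+\partial_{\bar c}h\,d\bar z^{\bar c}$ and using that $Q$ is $(2,0)+(0,2)$ with respect to $I$ (so $Q^{a\bar c}=0$), the $(1,0)$-part of $V$ is $V^a=Q^{ac}\partial_c h$. Here the essential input enters: the $(2,0)$-part of $Q$ is the \emph{holomorphic} Poisson tensor $\gs_-$, so $\partial_{\bar b}Q^{ac}=0$, and therefore $(L_VI)^a{}_{\bar b}=2i\,\partial_{\bar b}V^a=2i\,Q^{ac}\,\partial_c\partial_{\bar b}h$. On the other side I would compute $d(I^*dh)$ directly from $I^*dh=i\,\partial h-i\,\bar\partial h$; the $(2,0)$ and $(0,2)$ pieces vanish by symmetry of second derivatives (e.g. $\partial\partial h=0$), leaving the $(1,1)$-form $d(I^*dh)=-2i\,\partial_c\partial_{\bar b}h\,dz^c\wedge d\bar z^{\bar b}$. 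Contracting via $\big(Q\,d(I^*dh)\big)^a{}_{\bar b}=Q^{ac}(d(I^*dh))_{c\bar b}$ and multiplying by $-1$ reproduces exactly $2i\,Q^{ac}\partial_c\partial_{\bar b}h$, matching the mixed block of $L_VI$; the pure-type blocks of $-Q\,d(I^*dh)$ vanish because the corresponding $(2,0)$/$(0,2)$ components of $d(I^*dh)$ do, matching $L_VI$. Conjugation handles the barred entries, completing the identification.

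The main obstacle is not conceptual but bookkeeping: keeping the factors of $i$, the antisymmetrisations in $d(I^*dh)$, and the overall sign consistent under whatever conventions are fixed for $I^*$ acting on forms and for the contraction $Q\,\cdot\,$; it is precisely the overall sign that selects $-Q\,d(I^*dh)$ rather than $+$. The one genuinely substantive point to isolate is the use of holomorphicity of $\gs_-$: without $\partial_{\bar b}Q^{ac}=0$ one would pick up an extra term $2i(\partial_{\bar b}Q^{ac})\partial_c h$, so the lemma really encodes that $V$ is built from a holomorphic Poisson tensor. The singularities of $h\sim\log\|\gs_-\|^2$ at the zeros of $\gs_-$ are irrelevant to this tensor identity and are dealt with separately when arguing that $V$ and $L_VI$ extend smoothly.
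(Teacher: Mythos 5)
Your proposal is correct and follows essentially the same route as the paper: the paper also reduces the identity to the mixed $(1,0)$--$(0,1)$ block by acting on a $(0,1)$ vector field and pairing with a local holomorphic function, and the two key inputs — holomorphy of $Q^{(2,0)}=-4i\gs_-$ so that the antiholomorphic derivative passes through it, and the fact that $d(I^*dh)$ is of type $(1,1)$ — are exactly the ones you isolate. The only difference is presentational (explicit index computation versus pairing with $df$), and your sign bookkeeping is internally consistent.
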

\begin{proof}
  Let $U$ be any (0,1) vector field we compute $(L_VI)U=[V,IU]-I[V,U]=(-i-I)[V,U]$. To evaluate the rhs we pick a \emph{local} holomorphic function $f$ so that $df=\partial f$
  \bea \bra (L_VI)U,df\ket&=&-2i\bra [V,U],df\ket=-2i(V\circ U\circ f-U\circ V\circ f)=2iU\circ V\circ f\nn\\
  &=&2iU\circ Q(df,dh)=2iU\circ Q(\partial f,\partial h).\nn\eea
  As $Q^{(2,0)}$ and $f$ are holomorphic the $U$ derivative passes them by and gives
  \bea \bra (L_VI)U,df\ket=Q(\partial f,\iota_UdI^*dh)=-\bra df,(QdI^*dh)U\ket.\nn\eea
  The case for $U$ being $(1,0)$ is similar.
\end{proof}

\section{K\"ahler potential under the flow}
In this section we combine the two constructions of sec.\ref{sec_GKS} and sec.\ref{sec_overview}.
Let $(X,\Go_0)$ be holomorphic symplectic groupoid integrating $(M,I_-,\gs_-)$, then the flow $\phi_t$ generated by the vector field $V$ can be lifted up to $\hat\phi_t$ on $X$ generated by
\bea \hat V=-(\im\Go_0)^{-\#}({\tt t}^*dh),\nn\eea
where the minus sign comes from \eqref{used_IX} so that $\hat V$ covers $V$.
The lifted flow $\hat \phi_t$ satisfies
\bea {\tt t}\circ\hat\phi_t= \phi_t\circ {\tt t},~~~{\tt s}\circ\hat\phi_t={\tt s},\label{used_III}\eea
recall that the symplectic orthogonality \eqref{orthog_s_t} ensures that the image of ${\tt s}$ remains fixed, hence the second relation.

We have an expected result
\begin{lemma}
  \bea \Go_t=\hat\phi_t^*\Go_0=\Go_0+{\tt t}^*F.\label{used_IV}\eea
\end{lemma}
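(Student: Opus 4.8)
The plan is to verify \eqref{used_IV} by checking it at $t=0$ and then matching the $t$-derivatives of the two sides. At $t=0$ the left side is $\Go_0$ and, since $F(0)=0$ by \eqref{F}, the right side is also $\Go_0$; so it suffices to show that $\frac{d}{dt}\hat\phi_t^*\Go_0$ agrees with $\frac{d}{dt}{\tt t}^*F$ for all $t$, and then integrate.

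For the left side I would use that $\hat\phi_t$ is the flow of $\hat V$, together with Cartan's formula and the fact that a holomorphic symplectic form is closed, $d\Go_0=0$:
\bea \frac{d}{dt}\hat\phi_t^*\Go_0=\hat\phi_t^*L_{\hat V}\Go_0=\hat\phi_t^*\big(d\iota_{\hat V}\Go_0+\iota_{\hat V}d\Go_0\big)=\hat\phi_t^*d\iota_{\hat V}\Go_0.\nn\eea
Everything then hinges on evaluating the one-form $\iota_{\hat V}\Go_0$. Writing $\beta={\tt t}^*dh$ and $\hat V=-(\im\Go_0)^{-\#}\beta$, a direct contraction gives $\iota_{\hat V}\im\Go_0=\beta$; feeding this into the identity $\re\Go_0(X,Y)=\im\Go_0(X,J_0Y)$ (which is equivalent to \eqref{J_0_Omega} and is a restatement of $\Go_0$ being of type $(2,0)$ for $J_0$) yields $\iota_{\hat V}\re\Go_0=J_0^*\beta$. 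Since ${\tt t}$ is holomorphic, ${\tt t}^*I^*=J_0^*{\tt t}^*$, so $J_0^*\beta={\tt t}^*(I^*dh)$ and I obtain
\bea \iota_{\hat V}\Go_0={\tt t}^*(I^*dh)+i\,{\tt t}^*dh.\nn\eea

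The key observation is that the imaginary piece $i\,{\tt t}^*dh$ is exact, hence $d$-closed, so that $d\iota_{\hat V}\Go_0=d\,{\tt t}^*(I^*dh)$ and consequently $\frac{d}{dt}\hat\phi_t^*\Go_0=d\,\hat\phi_t^*{\tt t}^*(I^*dh)$. (Equivalently this shows $\im\Go_0$ is preserved by $\hat\phi_t$, which is the geometric reason the deformation ${\tt t}^*F$ appearing in \eqref{used_IV} is real.) For the right side I would differentiate \eqref{F}, push the exterior derivative through, and then use the intertwining relation ${\tt t}\circ\hat\phi_t=\phi_t\circ{\tt t}$ from \eqref{used_III} in the form ${\tt t}^*\phi_t^*=\hat\phi_t^*{\tt t}^*$:
\bea \frac{d}{dt}{\tt t}^*F={\tt t}^*d\,\phi_t^*(I^*dh)=d\,{\tt t}^*\phi_t^*(I^*dh)=d\,\hat\phi_t^*{\tt t}^*(I^*dh).\nn\eea
The two derivatives coincide, and integrating from $0$ to $t$ with the matching initial data established above gives \eqref{used_IV}.

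I expect the main obstacle to be the interior-product computation of $\iota_{\hat V}\Go_0$: one must split $\Go_0$ into $\re\Go_0+i\im\Go_0$, handle the musical isomorphism $(\im\Go_0)^{-\#}$ with its sign convention correctly (the minus sign in the definition of $\hat V$ is precisely what makes $\hat V$ cover $V$), and invoke the holomorphicity of both $\Go_0$ and of the target map ${\tt t}$. Isolating the real part as ${\tt t}^*(I^*dh)$ and recognising the leftover imaginary part as exact is the crux; the remaining ingredients — Cartan's formula, $d\Go_0=0$, and the intertwining relation \eqref{used_III} — are routine once this identity is in hand.
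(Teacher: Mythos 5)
Your proof is correct and reaches the paper's conclusion by the same overall strategy (differentiate in $t$, identify $\dot\Go_t$ with ${\tt t}^*\phi_t^*dI^*dh$ via the intertwining relation \eqref{used_III}, integrate), but you compute the Lie derivative $L_{\hat V}\Go_0$ by a genuinely different and more self-contained route. The paper writes $L_{\hat V}\Go_0=(\im\Go_0)L_{\hat V}J_0$ (implicitly using $L_{\hat V}\im\Go_0=0$ and $\re\Go_0=(\im\Go_0)J_0$) and then invokes an unproven groupoid analogue of lem.\ref{lem_used_II}, namely $L_{\hat V}J_0=(\im\Go_0)^{-1}dJ_0^*{\tt t}^*dh$, to conclude $\dot\Go_t=\hat\phi_t^*dJ_0^*{\tt t}^*dh$. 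You instead apply Cartan's formula with $d\Go_0=0$ and evaluate the contraction $\iota_{\hat V}\Go_0={\tt t}^*(I^*dh)+i\,{\tt t}^*dh$ directly from the algebraic relations \eqref{J_0_Omega}--\eqref{used_IX} and the holomorphicity of ${\tt t}$, so that the exactness of the imaginary piece does all the work. This buys you two things: you never need to re-derive lem.\ref{lem_used_II} upstairs on $X$, and you make explicit the geometric fact (only implicit in the paper) that $\hat\phi_t$ preserves $\im\Go_0$, which is why the deformation ${\tt t}^*F$ is real. The only point to police is the sign convention in $(\im\Go_0)^{-\#}$, which you correctly flag; with the paper's convention the real part of $\iota_{\hat V}\Go_0$ comes out as $+{\tt t}^*(I^*dh)$, matching \eqref{F}.
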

\begin{proof}
We let $\Go_t=\hat\phi_t^*\Go_0$ be the flow of $\Go_0$, then
\bea \dot\Go_t=\hat\phi_t^*L_{\hat V}\Go_0=\hat\phi_t^*L_{\hat V}((\im\Go_0)J_0))=(\im\Go_0)\hat\phi_t^*L_{\hat V}J_0\nn\eea
One has again the relation $L_{\hat V}J_0=(\im\Go_0)^{-1}dJ_0^*{\tt t}^*dh$ (here we understand the 2-form as a map from $T$ to $T^*$ and a bi-vector as a map from $T^*$ to $T$) similar to lem.\ref{lem_used_II}. Thus
\bea \dot\Go_t=\hat\phi_t^*dJ_0^*{\tt t}^*dh=\hat\phi_t^*{\tt t}^*dI^*dh={\tt t}^*\phi_t^*dI^*dh\nn\eea
where we used \eqref{used_III}. Continuing
\bea \Go_t-\Go_0=\int_0^t dt\dot\Go_t=\int_0^t {\tt t}^*\phi_t^*dI^*dh={\tt t}^* F\nn\eea
where $F$ was given in \eqref{F}.
\end{proof}

Next we look for the Darboux coordinates for $\Go_t$.
Pictorially the flow looks like fig.\ref{fig_cartoon}.
\begin{figure}[h]
\begin{center}
\begin{tikzpicture}[scale=1]
\draw[blue,line width=0.3mm] (-2,0) -- (2,0) node[right]{\scriptsize{$L_0\simeq M$}};
\draw [red,line width=0.3mm] plot [smooth, tension=0.5] coordinates {(-2,1) (-1.2,1.5) (1,0.8) (2,1.2)} node[right]{\scriptsize{$L_t$}};
\draw [->,densely dotted] plot [smooth, tension=0.5] coordinates {(-1.5,0) (-1.65,.4) (-1.55,.8) (-1.8,1.2)};
\node at (-1.4,0.6) {\scriptsize{$\hat\phi_t$}};
\node at (1.7,1.8) {\scriptsize{$X$}};

\draw[->] (1,.8) -- (1.2,0) node[below] {\scriptsize{${\tt s}$}};
\draw[->] (1,.8) -- (0.4,0) node[below] {\scriptsize{${\tt t}$}};

\draw[->] (0,1.1) -- (.6,1.1) node[right] {\scriptsize{$Q$}};
\draw[->] (0,1.1) -- (0,1.7) node[above] {\scriptsize{$P$}};

\draw[->] plot [smooth, tension=0.5] coordinates {(-0.4,0) (-0.5,0.3) (-0.6,0.7)};
\node at (-0.6,0.85) {\scriptsize{$Q_t$}};
\draw[->] plot [smooth, tension=0.5] coordinates {(-0.4,0) (-0.1,0.2) (0.3,0.4)};
\node at (0.45,0.4) {\scriptsize{$P_t$}};

\draw [->,densely dotted] plot [smooth, tension=0.5] coordinates {(-0.4,0) (-0.05,.5) (0,1.1)};
\node at (-0.15,0.5) {\scriptsize{$\hat\phi_t$}};
\end{tikzpicture}\caption{Cartoon of a flow $\hat \phi_t$ on $X$ lifting the flow $\phi_t$ on $M$.}\label{fig_cartoon}
\end{center}
\end{figure}
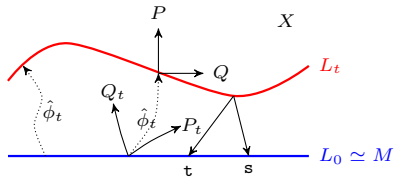
If we have found the Darboux coordinates for $\Go_0$
\bea i\sum_idP_i\wedge dQ^i=\Go_0\nn\eea
then these will pull back to
\bea Q_t=\hat\phi_t^*Q,~~P_t=\hat\phi_t^*P\nn\eea
and are subsequently the Darboux coordinates of $\Go_t$ since we have shown \eqref{used_IV}.
From this we also see that the new complex structure $J_t$ on $X$ induced from the deformed holomorphic symplectic structure $\Go_t$ is related to $J_0$ by the flow $\hat\phi_t$.

The space of units $L_0$ is flown to $L_t$. The Darboux coordinates $(P,Q)$ of $\Go_0$ pull back under $\hat\phi_t^*$.
Suppose that $L_t$ is described under $Q,P$ chart as
\bea L_t:=\{P_i=\eta_i(Q,\bar Q)\}\nn\eea
for some (1,0)-form $\eta$. Then as discussed earlier
\bea id\eta=\Go_0|_{L_t}=\hat\phi_t^*\Go_0|_{L_0}=(\Go_0+{\tt t}^*F)|_{L_0}={\tt t}^*F|_{L_0}\nn\eea
i.e. $\im(i\eta)$ is closed and one has a local real function $K(Q,\bar Q)$ with $i\eta|_{L_t}=-2i\partial K$, i.e. $L_t$ is the graph
\bea P_i=-2\partial_{Q^i}K.\nn\eea
There is in general no formula for finding the primitive for $\im(i\eta)$ apart from its $t$-derivative at $t=0$
\begin{proposition}\label{prop_used_III}
  Assuming that at $t=0$ the Lagrangian $L_0$ is described by $P=0$, i.e. $K\big|_{t=0}=0$, then
  the $t$-derivative of $K$ reads
  \bea \dot K\big|_{t=0}=-{\tt t}^*h.\nn\eea
\end{proposition}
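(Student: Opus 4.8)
The plan is to obtain $\dot K|_{t=0}$ by differentiating at $t=0$ the equation that describes the flowed Lagrangian $L_t=\hat\phi_t(L_0)$ in the fixed holomorphic Darboux chart $(P,Q)$ of $\Go_0$, and to use the hypothesis $K|_{t=0}=0$ — equivalently $\eta|_{t=0}\equiv 0$, i.e. $L_0=\{P=0\}$ — to discard exactly the terms that would otherwise block a closed formula. Everything is local, near a point where $\gs_-$ is non-degenerate so that $h$ and $\hat V$ are smooth.

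First I would set up the parametrisation. For $p\in L_0$ put $x(t)=\hat\phi_t(p)\in L_t$; since $L_t$ is the graph $\{P_i=\eta_i(Q,\bar Q;t)\}$, along this curve $P_i(x(t))=\eta_i\big(Q(x(t)),\bar Q(x(t));t\big)$. Differentiating in $t$ at $t=0$, the chain rule produces $\dot\eta_i$ together with the cross terms $\partial_{Q^j}\eta_i\,\dot Q^j+\partial_{\bar Q^j}\eta_i\,\dot{\bar Q}{}^j$; this is where the hypothesis enters, since $\eta_i(\,\cdot\,;0)\equiv 0$ kills $\partial_{Q}\eta|_{0}$ and $\partial_{\bar Q}\eta|_0$, leaving $\dot\eta_i|_{t=0}=\tfrac{d}{dt}P_i(x(t))|_{0}=dP_i(\hat V)\big|_{L_0}$. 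This simplification is the crux, and explains why the statement only concerns $t=0$: away from $t=0$ the cross terms survive and no such clean identity holds.

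Next I would read off $dP_i(\hat V)$ from $\hat V=-(\im\Go_0)^{-\#}({\tt t}^*dh)$, which says that $\hat V$ is, up to sign, the Hamiltonian vector field of ${\tt t}^*h$ for the real symplectic form $\im\Go_0$. Writing the Darboux form $\Go_0=i\sum_j dP_j\wedge dQ^j$ gives $\im\Go_0=\tfrac12\sum_j(dP_j\wedge dQ^j+d\bar P_j\wedge d\bar Q^j)$, and contracting with $\hat V$ and matching the coefficient of $dQ^i$ against $\iota_{\hat V}\im\Go_0=\pm{\tt t}^*dh$ yields $dP_i(\hat V)=2\,\partial_{Q^i}({\tt t}^*h)$ (this is just Hamilton's equation for the momentum $P_i$, the sign being fixed by the paper's conventions in \eqref{used_IX}). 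Combining with the previous step, $\dot\eta_i|_{t=0}=2\,\partial_{Q^i}({\tt t}^*h)$.

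Finally I would integrate. Since $\eta=-2\partial K$, i.e. $\re\eta=-dK$, differentiating in $t$ gives $\re\dot\eta=-d\dot K$; and as ${\tt t}^*h$ is real, $\re\big(2\partial({\tt t}^*h)\big)=d({\tt t}^*h)$, so $d\dot K|_{t=0}=-d({\tt t}^*h)$ and hence $\dot K|_{t=0}=-{\tt t}^*h$ up to an additive constant fixed by $K|_{t=0}=0$; on the units $L_0$, where ${\tt t}$ restricts to the identity, this is just $-h$. The subtlety I would watch for is that the obvious shortcut — differentiating $i\,d\eta=\Go_0|_{L_t}={\tt t}^*F|_{L_0}$ and inserting $\dot F(0)=2i\partial\bar\partial h$ from \eqref{used_V} — only controls $\partial\bar\partial\dot K$ and so determines $\dot K$ merely up to a pluriharmonic function (and with a convention-sensitive sign); it is precisely the first-order flow computation with the explicit $\hat V$ above, together with the horizontality $\eta|_{t=0}=0$, that upgrades this to the sharp identity $\dot K|_{t=0}=-{\tt t}^*h$. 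Beyond that the main obstacle is bookkeeping: keeping the musical-isomorphism and $\partial,\bar\partial$ conventions consistent, and checking that setting $P=0$ commutes with $\partial_{Q^i}$ so that the base function $\dot K(Q,\bar Q)$ really equals $-({\tt t}^*h)|_{L_0}$.
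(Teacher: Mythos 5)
Your proposal is correct. The paper gives no argument at all for this proposition (it simply declares ``the proof is immediate''), and your derivation --- differentiating the graph equation $P_i=\eta_i(Q,\bar Q;t)$ along the flow, using $\eta|_{t=0}=0$ to kill the cross terms so that $\dot\eta_i|_0=dP_i(\hat V)$, reading off Hamilton's equation for $P_i$ from $\hat V=-(\im\Go_0)^{-\#}({\tt t}^*dh)$, and then taking real parts via $\eta=-2\partial K$ --- is exactly the intended ``immediate'' computation, with the sign and the additive-constant ambiguity handled consistently with the paper's conventions (as one can cross-check against the leading term $\frac{t}{4}\log c_3$ of the GKP in Prop.~\ref{prop_GKP}).
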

The proof is immediate. In the toric case later, we will be able to get the primitive of $\re\eta$ somewhat more naturally.
%

%


\section{Application for toric Del Pezzo}
\subsection{Review of some elements of toric surfaces}
A good text book for this is Fulton \cite{Fulton:1436535}, and we shall also make use of the explicit parametrisation of toric manifolds due to Guillemin \cite{guillemin1994}.

Our focus of the toric geometry is more on its K\"ahler aspect, and so it is more appropriate to use the moment map polytope description (the fan description is more adapted for the algebraic aspect).
\begin{definition}
A polytope $\Delta\subset\BB{R}^d$ is the convex hull of a finite number of points in $\BB{R}^d$, called the vertices. A polytope $\Delta\subset\BB{R}^d$ is said to be \emph{Delzant} if for each vertex $p$ one has
\begin{enumerate}
 \item  there are exactly $d$ edges meeting at $p$;
 \item each edge meeting in $p$ is of the form $p+ t u$ with $u\in\BB{Z}^d$ and $t\geq 0$ ({\it rationality});
 \item the $d$ edges meeting in $p$ form a $\BB{Z}$-basis for $\BB{Z}^d$. (\emph{smoothness}) \label{smoothness}
\end{enumerate}
\end{definition}
\begin{theorem}\label{thm_Delzant}(Delzant \cite{Delzant_1988})
There is a 1-1 correspondence between symplectic toric manifolds (up to $\BB{T}^d$ equivariant symplectomorphisms)
and Delzant polytopes.
\end{theorem}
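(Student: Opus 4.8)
The plan is to exhibit the correspondence explicitly in both directions and then verify that the two constructions are mutually inverse. In one direction, from a Delzant polytope $\Delta\subset\BB{R}^d$ I would build a symplectic toric manifold by symplectic (Marsden--Weinstein) reduction; in the other, from a symplectic toric manifold I would read off a polytope as the image of its moment map. The content of ``1--1'' then splits into two checks: that the moment polytope of the reduced manifold is the one we started with, and (the genuinely hard part) that any symplectic toric manifold is determined up to $\BB{T}^d$-equivariant symplectomorphism by its moment polytope.

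For the construction, write $\Delta=\{x\in\BB{R}^d:\langle x,u_i\rangle\geq\lambda_i,\ i=1,\dots,n\}$ with inward facet normals $u_i\in\BB{Z}^d$; by the Delzant conditions the normals meeting at each vertex form a $\BB{Z}$-basis of $\BB{Z}^d$. Sending the standard basis vector $e_i\mapsto u_i$ defines a surjection $\pi\colon\BB{R}^n\to\BB{R}^d$, hence a short exact sequence $0\to\FR{k}\to\BB{R}^n\to\BB{R}^d\to0$ with $\FR{k}=\ker\pi$; exponentiating and using smoothness shows that $K=\ker(\BB{T}^n\to\BB{T}^d)$ is a connected subtorus. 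I would then take $\BB{C}^n$ with its standard symplectic form and Hamiltonian $\BB{T}^n$-action with moment map $\tfrac12(|z_1|^2,\dots,|z_n|^2)$ shifted by the $\lambda_i$, restrict the action to $K$ via the dual inclusion, and reduce at level $0$. The Delzant hypotheses are precisely what make $0$ a regular value on which $K$ acts freely, so $M_\Delta:=\phi_K^{-1}(0)/K$ is a smooth $2d$-dimensional symplectic manifold; the residual torus $\BB{T}^n/K\cong\BB{T}^d$ acts effectively and Hamiltonianly, and a direct computation identifies its moment image with $\Delta$.

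For the reverse map, given $(M,\go,\BB{T}^d)$ the Atiyah--Guillemin--Sternberg convexity theorem shows that the moment image is the convex hull of the images of the fixed points, i.e.\ a polytope. That this polytope is Delzant follows from the equivariant Darboux / local normal form theorem near a fixed point: the action there is linearizable and looks like the standard $\BB{T}^d$-action on $\BB{C}^d$, which forces exactly $d$ edges meeting at the corresponding vertex with primitive integral, unimodular directions.

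The crux is uniqueness: any $(M,\go,\BB{T}^d)$ with moment polytope $\Delta$ must be equivariantly symplectomorphic to the model $M_\Delta$. Over the open dense principal orbit the manifold is action-angle, namely $\mathring\Delta\times\BB{T}^d$ carrying the canonical form $\sum_i d\mu_i\wedge d\theta^i$, which depends only on $\Delta$, so the two manifolds are manifestly identified there. The real work is extending this identification across the lower-dimensional strata lying over the faces of $\Delta$, where the equivariant local normal form again supplies a standard neighborhood model. I would patch these local equivariant symplectomorphisms by a Moser argument: two $\BB{T}^d$-invariant symplectic forms with the same moment map are joined by $\go_s=(1-s)\go_0+s\go_1$, and one solves $\iota_{X_s}\go_s=-\sigma$ for an invariant time-dependent vector field whose flow fixes the moment map, $\sigma$ being an invariant primitive of $\go_1-\go_0$. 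The main obstacle is exactly the \emph{global} existence of this isotopy over all of $\Delta$ at once---choosing $\sigma$ invariantly and compatibly across strata of different dimension. This is controlled by the contractibility (convexity) of $\Delta$ together with the freeness and smoothness guaranteed by the Delzant condition, which together kill the potential gluing obstruction and let the isotopy descend.
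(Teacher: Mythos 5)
The paper does not prove this statement: it is quoted as Delzant's classical theorem with a citation, and the only hint of an argument in the text is the remark that the manifold $M_\Delta$ is obtained from the polytope by symplectic reduction or symplectic cut, with the Delzant condition guaranteeing smoothness. Your sketch is exactly the standard Delzant argument and its existence half coincides with the construction the paper alludes to, so there is nothing to contrast on that side. The only place your write-up is thinner than an actual proof is the uniqueness step: you should justify that the linear interpolation $\omega_s$ stays nondegenerate (this uses that both forms are compatible with the same moment map, not just invariance), and the ``gluing obstruction killed by contractibility'' is, in Delzant's paper, a genuine \v{C}ech cohomology computation for a sheaf of local equivariant symplectomorphisms over the polytope --- gesturing at convexity is the right idea but is where all the remaining work lives.
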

Given a toric symplectic manifold $M$, the image of the moment map for the torus action is a polytope and it is Delzant.
While from a Delzant polytope, one can construct a toric symplectic manifold $M_{\Gd}$ through symplectic reduction or symplectic cut \cite{Lerman1995}, the Delzant condition ensures the smoothness. It is useful to keep in mind the gist of the construction: one can visualise $M_{\Gd}$ as a $\BB{T}^d$ fibration over the interior of $\Gd$, however on a face $\Gd_a$ with normal $\vec v^a$ the torus action
\bea \vec v^a_i\frac{\partial}{\partial \phi_i}\nn\eea
degenerates. Here we use $\phi_i$ to denote the $d$ angle-coordinates of $\BB{T}^d$. Condition 3 above says that, each vertex has a neighbourhood of type $\BB{C}^d$, and up to an $SL(d,\BB{Z})$ the $\phi_i$ are the standard angle coordinates of $\BB{C}^d$.

For the remainder of the paper, we let $d=2$ and all polygons are assumed Delzant.
We denote the primitive normal to face $\Gd_a$ as $\vec v^a$. In fact, each face corresponds to an embedded $\BB{C}P^1$ in $M_{\Gd}$. It  is a toric invariant divisor, and all toric invariant divisors are generated (not freely) by the faces.
Suppose that the polygon is described by
\bea \Gd=\{\vec y\in \BB{R}^2\,|\,\bra \vec y,\vec v^a\ket+\gl^a\geq0\},\nn\eea
where $\vec y$ are the coordinates of $\BB{R}^2$.
Then from \cite{guillemin1994} the K\"ahler class is expressed as
\bea [\go]=\sum_a \gl^a[a].\nn\eea
Since the divisors $[a]$ are not independent and have $n$ relations among them, this expansion is not unique.
\begin{definition}
A smooth surface $M$ is called Del Pezzo if it is birational to $\BB{C}P^2$ and the anti-canonical class is ample.
The self intersection $d=K\cdotp K$ is called the degree of the Del Pezzo, here $K$ is the canonical divisor.
\end{definition}
Except for $\BB{C}P^1\times\BB{C}P^1$, the other Del Pezzo's are obtained from blowing up to 8 points in general position.
But we are interested in toric Del Pezzo's, so one can only blow up toric fixed points on $\BB{C}P^2$.
The polygon corresponding to $\BB{C}P^2$ is a triangle, see fig.\ref{fig_toric_del_pezzo}.
There are only three toric invariant points located at the three corners of the triangle.
Further blow-ups will product $-2$ curves and is not allowed (since any irreducible curve $C$ with $C\cdotp C<0$ must be rational and $C\cdotp C=-1$ if $-K$ is ample). We are left with the following polygons
\begin{center}
  \begin{tikzpicture}[scale=1]
\draw [-,blue] (0,0) -- (1,0) -- (0,1) -- (0,0);
\node at (0.5,-.3) {\scriptsize{$\BB{C}P^2$}};
\end{tikzpicture},~~
  \begin{tikzpicture}[scale=1]
\draw [-,blue] (0,0) -- (1,0) -- (1,1) -- (0,1) -- (0,0);
\node at (0.5,-.3) {\scriptsize{$\BB{C}P^1\times\BB{C}P^1$}};
\end{tikzpicture},~~
  \begin{tikzpicture}[scale=1]
\draw [-,blue] (0,0) -- (1.6,0) -- (0.6,1) -- (0,1) -- (0,0);
\node at (0.8,-.3) {\scriptsize{$\BB{F}_1$}};
\draw [-,red,thick] (0.6,1) -- (0,1);
\end{tikzpicture},~~
\begin{tikzpicture}[scale=1]
\draw [-,blue] (0,0) -- (1.2,0) -- (1.2,.5) -- (0.7,1) -- (0,1) -- (0,0);
\draw [-,red,thick] (1.2,0) -- (1.2,.5);
\draw [-,red,thick] (0.7,1) -- (0,1);
\node at (0.6,-.3) {\scriptsize{$d=7$}};
\end{tikzpicture},~~
\begin{tikzpicture}[scale=1]
\draw [-,red] (0,0.5) -- (0.5,0) -- (1.2,0) -- (1.2,0.5) -- (0.7,1) -- (0,1) -- (0,0.5);
\node at (0.6,-.3) {\scriptsize{$d=6$}};
\end{tikzpicture}\label{fig_toric_del_pezzo}
\end{center}
where we have marked the $-1$ curve in red.

\subsection{Action angle coordinates and complex coordinates}
Let $\vec y=(y^1,y^2)$ parametrise $\Gd\subset \BB{R}^2$, and let $\gt_1,\gt_2$ be the angle coordinates of $\BB{T}^2$. We have
\begin{proposition}\label{prop_symp_J}(Guillemin \cite{guillemin1994})
Let $M_{\Gd}$ be the toric symplectic manifold corresponding to $\Gd$, with normals $\vec v^a$.
The symplectic form of $M_{\Gd}$ reads
\bea \go=\sum_i dy^i\wedge d\gt_i.\nn\eea
Further the complex structure and metric read
\bea
& \displaystyle J=\sum_{ij}G_{ij}\partial_{\gt_i}\otimes dy^j - G^{ij}\partial_{y^i}\otimes d\gt_j,\label{comp_str}\\
& \displaystyle g=\sum_{ij} G_{ij}dy^i\otimes dy^j+G^{ij}d\gt_i\otimes d\gt_j\label{metric},
\eea
where
\bea\label{induced_metric}
&G=\displaystyle{\frac{1}{2}}\sum_a(\bra \vec v^a,\vec y\ket+\lambda^a)\log(\bra \vec v^a,\vec y\ket+\lambda^a),\nn\\
&\displaystyle G_i=\partial_{y^i}G,~~G_{ij}=\partial_{y^i}\partial_{y^j}G=\frac{1}{2}\sum_a\frac{\vec v^a_i\,\vec v^a_j}{\bra \vec v^a,\vec y\ket+\lambda^a}\nn\eea
and $G_{ij}\,G^{jk}=\delta_{ik}$.
The complex coordinates can be expressed as
\bea \xi_i=e^{i\gt_i+G_i}.\label{cplex_coord}\eea
\end{proposition}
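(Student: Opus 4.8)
The plan is to compute everything first on the open dense orbit $M^\circ\cong(\BB{C}^*)^2\cong\BB{R}^2\times\BB{T}^2$, where the whole statement reduces to a Legendre transform, and then to address the extension across the degenerate orbits. On $M^\circ$ there are natural holomorphic ``logarithmic'' coordinates $w_j=x_j+i\gt_j$, with the $\gt_j$ the angle coordinates of $\BB{T}^2$ and $x_j$ real; by toric invariance any invariant K\"ahler metric has a potential $F(x)$ depending only on $x=\re w$. First I would compute $\go=2i\partial\bar\partial F$ in these coordinates. Using $\partial_{w_j}F=\tfrac12 F_{x_j}$ a short calculation gives $\go=\sum_{jk}F_{x_jx_k}\,dx_k\wedge d\gt_j$, where the $dx\wedge dx$ and $d\gt\wedge d\gt$ pieces cancel against the symmetric Hessian. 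Introducing the moment map $y^j=F_{x_j}$, so that $dy^j=\sum_k F_{x_jx_k}\,dx_k$, this collapses to $\go=\sum_j dy^j\wedge d\gt_j$, the action--angle (Darboux) form asserted in the proposition; as $x$ ranges over $\BB{R}^2$ the image of $y=\nabla_xF$ sweeps out the interior of $\Gd$.

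The key device is then the Legendre transform. Defining $G(y)$ as the dual of $F(x)$ one has $x_i=\partial_{y^i}G=G_i$, and the two Hessians are mutually inverse, $\sum_j F_{x_ix_j}G_{jk}=\gd_{ik}$; in the notation of the proposition this says $F_{x_ix_j}=G^{ij}$ and encodes the stated identity $G_{ij}G^{jk}=\gd_i^{\,k}$. To transport the complex structure, I would start from the fact that the $w_j$ are holomorphic, i.e. $J\partial_{x_i}=\partial_{\gt_i}$ and $J\partial_{\gt_i}=-\partial_{x_i}$, and rewrite the $x$-basis in the $y$-basis via $\partial_{y^i}=\sum_j G_{ij}\partial_{x_j}$ and $\partial_{x_i}=\sum_j G^{ij}\partial_{y^j}$. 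This yields $J\partial_{y^i}=\sum_j G_{ij}\partial_{\gt_j}$ and $J\partial_{\gt_i}=-\sum_j G^{ij}\partial_{y^j}$, which is exactly \eqref{comp_str}. The metric \eqref{metric} then follows immediately from $g(\cdot,\cdot)=\go(\cdot,J\cdot)$, a direct pairing that produces the block-diagonal $\sum_{ij}G_{ij}dy^i\otimes dy^j+G^{ij}d\gt_i\otimes d\gt_j$ with no cross terms, and the holomorphic coordinate is $\xi_i=e^{w_i}=e^{G_i+i\gt_i}$ since $x_i=G_i$.

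It then remains to identify the explicit Guillemin potential $G=\tfrac12\sum_a\ell_a\log\ell_a$ with $\ell_a(\vec y)=\bra\vec v^a,\vec y\ket+\gl^a$. On the interior of $\Gd$ one checks directly that $G_{ij}=\tfrac12\sum_a\vec v^a_i\,\vec v^a_j/\ell_a$ is positive definite (strict convexity of $G$), so the formulas above define a genuine K\"ahler structure on $M^\circ$ with moment polytope $\Gd$.

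The main obstacle — and the only genuinely nontrivial step — is the boundary analysis: showing that this structure extends smoothly across each facet $\Gd_a$, where the circle generated by $\vec v^a$ degenerates and the naive coordinate $x_i=G_i$ blows up logarithmically. I would handle this as Guillemin does, either by verifying the regularity criterion for the symplectic potential (its singular part is precisely $\tfrac12\sum_a\ell_a\log\ell_a$, and the Delzant smoothness condition — that the normals at each vertex form a $\BB{Z}$-basis — guarantees the metric closes up to a smooth one in the standard $\BB{C}^2$ chart at that corner), or equivalently by realising $M_\Gd$ as the K\"ahler quotient of $\BB{C}^N$ in the Delzant construction and computing the reduced potential, which reproduces the stated $G$ after the Legendre transform. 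The open-orbit computation is routine linear algebra; this smooth compactification across the toric divisors is the part requiring care.
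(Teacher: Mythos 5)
The paper offers no proof of this proposition at all --- it is quoted verbatim from Guillemin \cite{guillemin1994} --- so there is nothing internal to compare against; your job was essentially to reconstruct Guillemin's argument, and your outline is the correct and standard one. The open-orbit computation is right in all details I checked: $\go=2i\partial\bar\partial F=\sum_{jk}F_{x_jx_k}dx_k\wedge d\gt_j=\sum_j dy^j\wedge d\gt_j$ after setting $y^j=F_{x_j}$; the Legendre duality $x_i=G_i$, $F_{x_ix_j}=G^{ij}$ with mutually inverse Hessians; $J\partial_{y^i}=\sum_j G_{ij}\partial_{\gt_j}$ and $J\partial_{\gt_i}=-\sum_j G^{ij}\partial_{y^j}$ from holomorphicity of $w_j=x_j+i\gt_j$; the block-diagonal metric from $g=\go(\cdot,J\cdot)$; and $\xi_i=e^{w_i}=e^{G_i+i\gt_i}$. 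You are also right to single out the extension across the toric divisors as the only genuinely nontrivial step --- the logarithmic blow-up of $x=\nabla_yG$ at a facet must be absorbed by the degenerating circle, and the Delzant smoothness condition is exactly what makes the metric close up in the $\BB{C}^2$ chart at each vertex. You defer that step to Guillemin's regularity criterion or to the K\"ahler-quotient computation rather than carrying it out, which leaves the sketch incomplete as a self-contained proof; but since the paper itself treats the entire proposition as a citation, this is a reasonable stopping point rather than a gap in reasoning.
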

Recall from the discussion below thm.\ref{thm_Delzant} that each corner of the polygon corresponds to a $\BB{C}^2$ patch, covering the whole $M$.
\begin{center}
\begin{tikzpicture}[scale=.8]
\draw [-,blue,thick]  (.8,2) -- (0,1) -- (0,0) -- (1.5,0) -- (3,1.5);
\draw [-,blue,dotted]  (0.8,2) -- (3,1.5);

\draw [->,blue] (0.3,1.4) node [left] {\scriptsize$\vec u^1$} -- (.7,1.05);
\draw [->,blue] (0,0.7) node [left] {\scriptsize$\vec u^2$} -- (.5,0.7);
\node at (0.25,1) {\scriptsize$z$};

\draw [->,blue] (1,0) node [below] {\scriptsize$\vec v^1$} -- (1,.5);
\draw [->,blue] (1.8,0.4) node [right] {\scriptsize$\vec v^2$} -- (1.5,0.7);
\node at (1.4,.2) {\scriptsize$\hat z$};

\end{tikzpicture}
\end{center}
At a corner with normals $\vec u^1,\vec u^2$, one can take combination of \eqref{cplex_coord} to give local complex coordinates for the $\BB{C}^2$ patch
\bea z_1=\xi_1^{u^2_2}\xi_2^{-u^2_1},~~~z_2=\xi_1^{-u^1_2}\xi_2^{u^1_1}.\label{local_cplx_coord}\eea
For later use we record how the complex coordinates change from one local patch to another.
\begin{lemma}\label{lem_coord_chg}
Suppose at the two corners of $\Gd$ with normals $\vec u^a$ and $\vec v^a$ respectively, one has complex coordinates $\{z_a,\,a=1,2\}$ and $\{\tilde z_a,\,a=1,2\}$ defined as in \eqref{local_cplx_coord} using $\{\vec u^a\}$ and $\{\vec v^a\}$ respectively, then over the open intersection where both coordinate systems are valid, one has the transformation
\bea \tilde z_a=\prod_b z_b^{A_{ab}},~~~A_{ab}=\left[
                                                 \begin{array}{cc}
                                                   \vec u^1\times\vec v^2 &\vec u^2\times\vec v^2 \\
                                                   \vec v^1\times \vec u^1 & \vec v^1\times\vec u^2 \\
                                                 \end{array}\right],~~~\det A=1.\label{chg_cplx_coord}\eea
\end{lemma}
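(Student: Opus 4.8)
The plan is to exploit the fact that, by \eqref{local_cplx_coord}, both coordinate systems $\{z_a\}$ and $\{\tilde z_a\}$ are written as Laurent monomials in the \emph{same} pair of functions $\xi_1,\xi_2$ of \eqref{cplex_coord}, the only difference being which set of normals enters the exponents. On the open overlap the two $\BB{C}^2$ patches meet inside the dense torus, where the $\xi_i$ are common, single-valued and nowhere zero (being built from the globally defined $G_i$ and the angles $\gt_i$), so it suffices to track exponents. Writing the defining relations logarithmically I would record
\bea \begin{pmatrix}\log z_1\\ \log z_2\end{pmatrix}=U\begin{pmatrix}\log\xi_1\\ \log\xi_2\end{pmatrix},\qquad U=\begin{pmatrix} u^2_2 & -u^2_1\\ -u^1_2 & u^1_1\end{pmatrix},\nn\eea
and likewise $\log\tilde z=V\log\xi$, with $V$ obtained from $U$ by replacing each $\vec u^a$ by $\vec v^a$. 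Eliminating the common $\log\xi$ gives $\log\tilde z=VU^{-1}\log z$, i.e.\ $\tilde z_a=\prod_b z_b^{(VU^{-1})_{ab}}$, so that $A=VU^{-1}$.

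Next I would compute the product explicitly. One has $\det U=u^1_1u^2_2-u^1_2u^2_1=\vec u^1\times\vec u^2$, and by the smoothness axiom of the Delzant polygon the primitive normals $\vec u^1,\vec u^2$ form a $\BB{Z}$-basis, so $\vec u^1\times\vec u^2=1$ once they are ordered compatibly with the orientation of $\Gd$ (a quick check on the $\BB{C}P^2$ triangle with inward normals confirms the sign is uniformly $+1$ at every vertex under a counterclockwise ordering of edges). Hence $U^{-1}=\begin{pmatrix} u^1_1 & u^2_1\\ u^1_2 & u^2_2\end{pmatrix}$ has integer entries, and a direct multiplication $VU^{-1}$, reading each resulting $2\times2$ minor as a cross product, yields precisely the four entries $\vec u^1\times\vec v^2$, $\vec u^2\times\vec v^2$, $\vec v^1\times\vec u^1$, $\vec v^1\times\vec u^2$ of the stated matrix $A$. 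The determinant is then immediate from multiplicativity, $\det A=\det V/\det U=(\vec v^1\times\vec v^2)/(\vec u^1\times\vec u^2)=1$, both factors being $+1$ by smoothness and the orientation convention.

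The step I would flag as the real crux is \emph{integrality}: the transition must be a genuine biholomorphism between the two charts — a monomial automorphism of the overlapping torus — which forces every $A_{ab}\in\BB{Z}$. This is exactly what $\det U=\pm1$, i.e.\ the smoothness condition, guarantees; without it $U^{-1}$ would carry fractional entries and the monomials $z_b^{A_{ab}}$ would be multivalued. Everything else is routine: the only remaining bookkeeping is to fix the sign conventions (inward normals, cyclic ordering of edges) consistently so that $\det U=+1$ rather than $-1$ at each vertex, after which the lemma reduces to the single $2\times2$ matrix identity above.
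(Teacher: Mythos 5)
Your proof is correct and is exactly the ``direct calculation'' the paper alludes to: it eliminates the common $\xi_i$ between the two monomial charts, uses $\vec u^1\times\vec u^2=1=\vec v^1\times\vec v^2$ from the Delzant smoothness condition (which the paper also flags as the key input), and the resulting product $VU^{-1}$ reproduces the stated matrix $A$ entry by entry. Your added remarks on integrality and sign/orientation conventions are sound refinements of the same argument, not a different route.
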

The proof is a direct calculation. Of course in the derivation, we used $\vec u^1\times \vec u^2=1=\vec v^1\times \vec v^2$ since $\Gd$ is Delzant.

\subsection{Toric invariant holomorphic Poisson tensor}
A consequence of \eqref{chg_cplx_coord} is
\begin{lemma}
  The unique (up to a constant multiple) toric invariant holomorphic Poisson tensor on a toric surface has local expression
  \bea \gs=iz_1z_2\frac{\partial}{\partial z_1}\wedge \frac{\partial}{\partial z_2}.\label{Hol_Poisson_z}\eea
  This expression is defined on the $\BB{C}^2$ patch corresponding to a corner of $\Gd$ with normals $\vec u^1,\vec u^2$, but is in fact globally defined.
\end{lemma}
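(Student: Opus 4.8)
The plan is to split the statement into a local part (the form of an invariant tensor in one corner chart) and a global part (that these local forms glue), using lem.\ref{lem_coord_chg} for the latter. I would first note that on a complex surface the Poisson (Jacobi) condition is automatic: for any holomorphic bivector $\gs$ the Schouten bracket $[\gs,\gs]$ is a section of $\Lambda^3 T^{1,0}M=0$, so a holomorphic Poisson tensor is nothing but a holomorphic section of the anticanonical bundle $\Lambda^2 T^{1,0}M$. It therefore suffices to find a toric invariant holomorphic bivector. In the corner chart with normals $\vec u^1,\vec u^2$ and coordinates $z_1,z_2$ from \eqref{local_cplx_coord}, I write $\gs=f(z_1,z_2)\,\partial_{z_1}\wedge\partial_{z_2}$ with $f$ holomorphic and determine $f$ from invariance.

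For the local uniqueness, recall that $\xi_i=e^{i\gt_i+G_i}$ and that the $\BB{T}^2$ action shifts $\gt_i\to\gt_i+t_i$ while leaving the $y$-dependence (hence $G_i$) fixed, so from \eqref{local_cplx_coord} the coordinates transform by unit characters, $z_a\to\chi_a z_a$. The exponent matrix relating $(\gt_1,\gt_2)$ to $(z_1,z_2)$ has determinant $\vec u^1\times\vec u^2=1$ by the Delzant condition, so $(\chi_1,\chi_2)$ sweeps out all of $U(1)^2$. Under this action $\partial_{z_1}\wedge\partial_{z_2}\to(\chi_1\chi_2)^{-1}\partial_{z_1}\wedge\partial_{z_2}$, hence invariance forces $f(\chi_1 z_1,\chi_2 z_2)=\chi_1\chi_2\,f(z_1,z_2)$. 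Expanding $f=\sum a_{mn}z_1^m z_2^n$, every surviving monomial must have multidegree $(1,1)$, so $f=a_{11}z_1 z_2$. This gives local uniqueness up to a constant, and normalising to $i$ yields \eqref{Hol_Poisson_z}.

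For the global part, the cleanest route is to pass to logarithmic frames: since $z_a\partial_{z_a}=\partial_{w_a}$ with $w_a=\log z_a$, the candidate reads $\gs=i\,\partial_{w_1}\wedge\partial_{w_2}$, a constant bivector in the $w$-coordinates. Lemma \ref{lem_coord_chg} gives $\tilde z_a=\prod_b z_b^{A_{ab}}$, i.e. $\tilde w_a=\sum_b A_{ab}w_b$, whence $\partial_{w_b}=\sum_a A_{ab}\partial_{\tilde w_a}$ and
\bea \partial_{w_1}\wedge\partial_{w_2}=\det A\;\partial_{\tilde w_1}\wedge\partial_{\tilde w_2}.\nn\eea
Because $\Gd$ is Delzant one has $\det A=1$, so $i\,\partial_{w_1}\wedge\partial_{w_2}=i\,\partial_{\tilde w_1}\wedge\partial_{\tilde w_2}$ and the two chart expressions agree on the overlap. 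As $f=z_1 z_2$ is polynomial it extends holomorphically across the divisors $z_a=0$ within each corner chart, and the corner charts cover $M_{\Gd}$, so \eqref{Hol_Poisson_z} defines a global holomorphic Poisson tensor; combined with the local uniqueness above it is the unique toric invariant one up to scale.

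The computation is essentially routine once lem.\ref{lem_coord_chg} is in hand, so I do not expect a serious obstacle; the only point deserving care is the bookkeeping that converts the multiplicative transition cocycle $A$ into the additive transformation of the logarithmic frame, together with the observation that the unimodularity $\det A=1$ — itself a restatement of the smoothness part of the Delzant condition — is \emph{exactly} what preserves the top bivector. I would also take care to verify that invariance under the \emph{real} torus already pins down $f$, which the surjectivity of $(\chi_1,\chi_2)$ onto $U(1)^2$ guarantees, so no appeal to the complexified action is needed.
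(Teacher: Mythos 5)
Your proof is correct and the key step — that the local expression glues because the transition matrix $A$ of lem.\ref{lem_coord_chg} is unimodular — is exactly the paper's argument, which you make transparent by passing to the logarithmic frame $\partial_{w_1}\wedge\partial_{w_2}$. You additionally supply the local uniqueness via the $U(1)^2$ character decomposition and the observation that the Jacobi identity is vacuous on a surface, both of which the paper asserts without proof; these additions are sound.
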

\begin{proof}
  We need only check that if one makes a coordinate transform as in \eqref{chg_cplx_coord}, the local expression of $\gs$ remains unchanged, but this follows from $\det A=1$.
\end{proof}
A straightforward calculation gives
\begin{lemma}
  When written in the action-angle coordinates, the holomorphic Poisson tensor $\gs$ reads
  \bea \gs=i(\frac12G^{1i}\partial_{y^i}-\frac{i}{2}\partial_{\gt_1})\wedge(\frac12G^{2j}\partial_{y^j}-\frac{i}{2}\partial_{\gt_2}).
  \label{Hol_Poisson_y}\eea
  Its norm computed with the metric \eqref{metric} is
  \bea ||\gs||^2=\frac12 {\det}^{-1}(G_{ij}).\label{norm_sigma}\eea
\end{lemma}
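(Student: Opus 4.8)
The plan is to recognise the bivector on the right-hand side of \eqref{Hol_Poisson_y} as $i\,D_1\wedge D_2$, where $D_k:=\partial/\partial\log\xi_k$ is the $(1,0)$ vector field dual to the holomorphic one-form $d\log\xi_k$, and then to match this against the known local expression \eqref{Hol_Poisson_z}. Both halves of the lemma reduce to the same computation, namely finding $D_k$ explicitly in the action-angle frame. From \eqref{cplex_coord} one has $\log\xi_k=i\gt_k+G_k$, hence $d\log\xi_k=i\,d\gt_k+G_{kj}\,dy^j$ using $dG_k=G_{kj}\,dy^j$ from \eqref{induced_metric}. First I would solve the linear system characterising $D_k$, i.e. $\bra d\log\xi_i,D_k\ket=\gd_{ik}$ and $\bra d\overline{\log\xi_i},D_k\ket=0$; writing $D_k=a^j\partial_{y^j}+b^j\partial_{\gt_j}$ and using $d\overline{\log\xi_i}=-i\,d\gt_i+G_{ij}\,dy^j$, the sum and difference of the two equations give $G_{ij}a^j=\tfrac12\gd_{ik}$ and $b^i=-\tfrac{i}{2}\gd_{ik}$, so that $D_k=\tfrac12 G^{jk}\partial_{y^j}-\tfrac{i}{2}\partial_{\gt_k}$, which is exactly the $k$-th factor appearing in \eqref{Hol_Poisson_y}.

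To finish the first claim I would connect the $\xi$-frame to the local complex coordinates of \eqref{local_cplx_coord}. Since $z_a\partial_{z_a}=\partial/\partial\log z_a$, the local form \eqref{Hol_Poisson_z} reads $\gs=i\,\partial_{\log z_1}\wedge\partial_{\log z_2}$. Taking logarithms in \eqref{local_cplx_coord} shows that $\log z_a$ is an integral-linear combination of $\log\xi_1,\log\xi_2$ whose coefficient matrix has determinant $\vec u^1\times\vec u^2=1$ by the Delzant condition; exactly as in the proof of the invariance of \eqref{Hol_Poisson_z} under \eqref{chg_cplx_coord}, such a unimodular monomial change of coordinates preserves the logarithmic bivector, so $\partial_{\log z_1}\wedge\partial_{\log z_2}=\partial_{\log\xi_1}\wedge\partial_{\log\xi_2}=D_1\wedge D_2$. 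Substituting the expression for $D_k$ from the previous step then yields \eqref{Hol_Poisson_y}.

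For the norm I would compute the Hermitian Gram matrix of this $(1,0)$ frame. Using the metric \eqref{metric}, the vanishing of the mixed blocks $g(\partial_{y^i},\partial_{\gt_j})=0$, together with $g(\partial_{y^i},\partial_{y^j})=G_{ij}$ and $g(\partial_{\gt_i},\partial_{\gt_j})=G^{ij}$, a short calculation gives $g(D_j,\bar D_k)=\tfrac14 G^{ij}G^{lk}G_{il}+\tfrac14 G^{jk}=\tfrac12 G^{jk}$, the two quarters combining because $G^{ij}G_{il}=\gd^j_l$. The pointwise norm of the decomposable holomorphic bivector $\gs=i\,D_1\wedge D_2$ is then the $2\times2$ determinant of this Gram matrix (the scalar $i$ contributing $|i|^2=1$), namely $\det\!\big(\tfrac12 G^{jk}\big)=\tfrac14\,{\det}^{-1}(G_{ij})$; up to the overall normalisation fixed by the convention for the norm of a holomorphic bivector this is the claimed $\tfrac12\,{\det}^{-1}(G_{ij})$ of \eqref{norm_sigma}.

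The only genuinely delicate point is bookkeeping: getting the dual $(1,0)$ frame right (it is easy to misplace the factors of $\tfrac12$ and $-\tfrac{i}{2}$) and tracking the overall constant in the norm, which is convention-dependent. I would emphasise that this last ambiguity is harmless for everything downstream, since the Hamiltonian of Hitchin's flow is $h\sim\log||\gs||^2$ and enters only through $dh$, so any multiplicative constant in $||\gs||^2$ drops out. The conceptual crux, by contrast, is the unimodularity $\det=1$ inherited from the Delzant condition, which is precisely what guarantees that the logarithmic bivector — and hence the expression \eqref{Hol_Poisson_y} — is coordinate-independent and globally well defined.
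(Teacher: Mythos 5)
Your proof is correct and is essentially the ``straightforward calculation'' the paper leaves implicit: you identify the two factors in \eqref{Hol_Poisson_y} as the $(1,0)$ frame dual to $d\log\xi_k=i\,d\gt_k+G_{kj}\,dy^j$, use the unimodularity of the Delzant change of basis to match the local expression \eqref{Hol_Poisson_z}, and compute the Gram matrix $g(D_j,\bar D_k)=\tfrac12 G^{jk}$. The only loose end is the overall constant in the norm (your Gram determinant gives $\tfrac14\,{\det}^{-1}(G_{ij})$ versus the stated $\tfrac12\,{\det}^{-1}(G_{ij})$), which, as you correctly observe, is fixed by the convention for the norm of a bivector and is immaterial downstream since only $d\log||\gs||^2$ enters Hitchin's flow.
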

The determinant $\det(G_{ij})$ can be evaluated as
\bea 4\det(G_{ij})=(\sum_a \frac{v^a_1v^a_1}{\vec y\cdotp\vec v^a+\gl^a})
(\sum_b \frac{v^b_2v^b_2}{\vec y\cdotp\vec v^b+\gl^b})-(\sum_a \frac{v^a_1v^a_2}{\vec y\cdotp\vec v^a+\gl^a})^2.\nn\eea
The concrete expression for $\BB{C}P^2$ is given in sec.\ref{sec_GoCP2}.

\subsection{The symplectic groupoid}
According to the recipe for constructing the Morita equivalence, we need to find $(X,\Go_0)$ integrating $\gs$ given in \eqref{Hol_Poisson_z}. This has been studied in \cite{li2018symplectic} but, since this case is simple, we try to give a self contained presentation. We start from an example on $\BB{C}^2$.
\begin{example}\label{toy_model}
On $\BB{C}^2$ with complex coordinates $x,y$, we have a holomorphic Poisson tensor
\bea \pi=xy\partial_x\wedge \partial_y\label{second_model}.\eea
The pervading trick in finding the integrating object is to notice that $\pi$ is non-degenerate except when $xy=0$. This means that $(X,\Go)$ would look like a pair groupoid $X\sim U\times U$ in an open dense $U=\{xy\neq 0\}$, with
\bea \Go^{-1}=-\pi|_{(x,y)}+\pi|_{(\hat x,\hat y)}\nn\eea
where $x,y$ are the source coordinates and $\hat x,\hat y$ those of the target.

To extend to the whole of $\BB{C}^2$, we describe the symplectic groupoid as $\BB{C}^4$ with coordinates $x,y,u,v$ which away from the divisor, are related to the old ones as
\bea &\hat x/x=e^{uy},~~\hat y/y=e^{vx}.\label{change_of_coord}\eea
This trade is meant to reflect the fact that if $xy=0$ then $\hat x=x,\hat y=y$ necessarily and $u,v,x,y$ are a more fundamental set of coordinates.
Rewriting $\Go^{-1}$ in the $(x,y,u,v)$ coordinates
\bea -\Go^{-1}=-\partial_u\wedge\partial_y-uv\partial_u\wedge\partial_v-vy\partial_v\wedge\partial_y+\partial_v\wedge\partial_x
+ux\partial_u\wedge\partial_x+xy\partial_x\wedge\partial_y.\nn\eea
This is invertible, in fact $\det\Go=1$ everywhere. Its inverse is
\bea \Go=uvdx\wedge dy+vy dx\wedge du-dx\wedge dv+dy\wedge du-ux dy\wedge dv-xydu\wedge dv.\label{second_model_int}\eea
\end{example}

The next step is to globalise this construction by regarding $u,v$ as the fibre coordinates of some vector bundle to take into account the non-trivial coordinate change.
\begin{theorem}\label{thm_int_Poisson_toric}
  Let $M$ be a smooth toric surface associated with a Delzant polygon $\Gd$. The holomorphic Poisson structure $\gs$ \eqref{Hol_Poisson_z} can be integrated into a symplectic groupoid $(X,\Go_0)$. The space $X$ is diffeomorphic to the holomorphic cotangent bundle of $M$. If $(z^a,\xi_a)$ are the local base and fibre coordinates, then $\Go_0$ reads
  \bea i\Go_0=-\xi_1\xi_2 dz^1\wedge dz^2-\xi_1z^2 dz^1\wedge d\xi_2+dz^1\wedge d\xi_1+dz^2\wedge d\xi_2+\xi_2 z^1 dz^2\wedge d\xi_1-z^1z^2 d\xi_1\wedge d\xi_2\label{hol_symp_P2}.\eea
  Its inverse $\Pi=\Go_0^{-1}$ reads
  \bea -i\Pi=\partial_{\xi_1}\wedge\partial_{z^1}+\partial_{\xi_2}\wedge\partial_{z^2}-\xi_1\xi_2\partial_{\xi_1}\wedge\partial_{\xi_2}+\xi_1 z^2\partial_{\xi_1}\wedge\partial_{z^2}
 -\xi_2 z^1\partial_{\xi_2}\wedge\partial_{z^1}-z^1z^2\partial_{z^1}\wedge\partial_{z^2}\nn\eea
  Let $g=(z^1,z^2;\xi_1,\xi_2)$ be a point in $X$, the source, target maps read
  \bea& g=(z^1,z^2;\xi_1,\xi_2),~~{\tt s}(g)=(z^1,z^2),~~{\tt t}(g)=(z^1e^{\xi_2z^2},z^2e^{-\xi_1 z^1}).\nn\eea
  Let $h=(z^1e^{\xi_2z^2},z^2e^{-\xi_1z^1};\eta_1,\eta_2)$ be another point with ${\tt s}(h)={\tt t}(g)$ then
  \bea h\circ g=(z^1,z^2;\xi_1+\eta_1e^{\xi_2z^2},\xi_2+\eta_2e^{-\xi_1z^1}).\nn\eea
\end{theorem}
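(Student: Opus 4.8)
The plan is to reduce the global statement to the local model of Example~\ref{toy_model} on each corner chart of $\Gd$, and then to glue the charts into $T^{*(1,0)}M$. Each corner of $\Gd$ carries a $\BB{C}^2$ chart on which $\gs$ takes the form \eqref{Hol_Poisson_z}, which is exactly the tensor $\pi$ of Example~\ref{toy_model} up to the overall factor $i$. Because $\gs=i\pi$, the form integrating $\gs$ differs from the toy-model form \eqref{second_model_int} by a factor of $i$, which is precisely why the statement is phrased in terms of $i\Go_0$ and $-i\Pi$.

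First I would establish the chart formulas by matching the two models. Identifying the auxiliary coordinates of the toy model with the cotangent fibres through $u=\xi_2,\ v=-\xi_1$ (and $x=z^1,\ y=z^2$) and substituting into \eqref{second_model_int} reproduces the claimed $i\Go_0$ of \eqref{hol_symp_P2} term by term; the same substitution in the toy-model bivector yields $-i\Pi$. Non-degeneracy and the explicit inverse $\Pi$ are then inherited from the example, where $\det\Go=1$ is verified. The source map is the cotangent projection, while the target is read off from $\hat z^1/z^1=e^{uy}=e^{\xi_2 z^2}$ and $\hat z^2/z^2=e^{vx}=e^{-\xi_1 z^1}$, matching the stated ${\tt s},{\tt t}$.

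Next I would check the (symplectic) groupoid axioms directly from the exponential formulas. The identities ${\tt s}(h\circ g)={\tt s}(g)$ and ${\tt t}(h\circ g)={\tt t}(h)$, associativity, the unit section $\xi=0$, and inversion are short computations with nested exponentials; closedness and multiplicativity of $\Go_0$ then make $(X,\Go_0)$ a symplectic groupoid. That it integrates $\gs$ is the Poisson property of the maps: since ${\tt s}$ is the projection, only the pure-base term $-z^1z^2\,\partial_{z^1}\wedge\partial_{z^2}$ of $-i\Pi$ survives the pushforward, giving ${\tt s}_*\Pi=-\gs$, while ${\tt t}_*\Pi=\gs$ follows from pushing $\Pi$ forward along the explicit target map.

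The hard part is the globalisation, since $u,v$ were introduced ad hoc in the toy model and one must show they transform as genuine cotangent fibres under the toric transitions of Lemma~\ref{lem_coord_chg}. The transparent route is to pass to logarithmic fibre coordinates $\zeta_a=z^a\xi_a$ (no sum), so that the canonical one-form becomes $\sum_a\zeta_a\,d\log z^a$; under $\log\tilde z=A\log z$ the momenta then transform linearly as $\zeta=A^T\tilde\zeta$. In these variables the target map is the affine shift $\log\hat z=\log z+J\zeta$ with $J=\left[\begin{smallmatrix}0&1\\-1&0\end{smallmatrix}\right]$, and it is coordinate-independent precisely because $AJA^T=J$ — which for a $2\times 2$ integer matrix is equivalent to $\det A=1$, the Delzant condition recorded after Lemma~\ref{lem_coord_chg}. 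The same identity renders $\Go_0$ and the multiplication invariant under the transitions, so the $\BB{C}^4$ charts assemble into the holomorphic cotangent bundle and every structure in the statement is globally well defined.
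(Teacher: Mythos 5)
Your proposal is correct and follows essentially the same route as the paper: reduce to the local model of Example~\ref{toy_model} on each corner chart, then globalise by observing that $\zeta_a=z^a\xi_a$ transforms linearly by $A^T$ while $\log z^a$ transforms by $A$, so that $\det A=1$ (equivalently $AJA^T=J$) makes the target map and $\Go_0$ transition-invariant — this is precisely the paper's $\ep^{-1}A^T\ep=A^{-1}$ argument and the subsequent remark using $\log q^a$ and $p_aq^a$. You additionally spell out the groupoid axioms and the Poisson/anti-Poisson property of ${\tt t},{\tt s}$, which the paper leaves implicit, but this is elaboration rather than a different method.
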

\begin{proof}
  We only need to prove that the local model ex.\ref{toy_model} globalises under the coordinate change rule lem.\ref{lem_coord_chg}.
  If $z^a\to\tilde z^a$ as in \eqref{chg_cplx_coord}, then as fibre coordinates of $T^*M$
  \bea \xi_a=\sum_b \frac{\tilde z^b}{z^a}A_{ba}\tilde \xi_b.\nn\eea
  This means the combination $z^a\xi_a$ (no sum) transforms linearly
  \bea z^a\xi_a=\sum_b \tilde z^b\tilde \xi_bA_{ba}.\nn\eea
  Note that $A_{ab}$ is an $SL(2,\BB{Z})$ matrix, but recall that for all $SL(2,\BB{C})$ matrices $A$ one has
  \bea \ep^{-1} A^T \ep =A^{-1},~~\ep=\left[
                                        \begin{array}{cc}
                                          0 & 1 \\
                                          -1 & 0 \\
                                        \end{array}\right]\nn\eea
  This shows that
  \bea \ep\left[
            \begin{array}{c}
              z^1\xi_1 \\
              z^2\xi_2 \\
            \end{array}\right]=\left[
            \begin{array}{c}
              z^2\xi_2 \\
              -z^1\xi_1 \\
            \end{array}\right]\nn\eea
  transform the same way as $\log z^a$, thus the target $(z^1e^{\xi_2z^2},z^2e^{-\xi_1 z^1})$ transform the same way as $(z^1,z^2)$.
  This shows that the ${\tt t}$ map is globally defined.

  Using the coordinate transform for $T^*\BB{C}^2$ it is straightforward, though tedious, to check that $\Go_0$ is globally well-defined.
\end{proof}

\begin{lemma}\label{lem_Darboux}
The Darboux coordinates are
\bea& q^1=z^1e^{\xi_2z^2/2},~~q^2=z^2e^{-\xi_1z^1/2},~~p_1=\xi_1 e^{-\xi_2 z^2/2},~~p_2=\xi_2 e^{\xi_1 z^1/2},\nn\\
&i\Go_0=dq^1 \wedge dp_1+dq^2 \wedge dp_2.\nn\eea
Under this coordinates, the source, target maps are
\bea && {\tt s}(q^1,q^2,p_1,p_2)=(q^1e^{-p_2q^2/2},q^2e^{p_1q^1/2}),~~~{\tt t}(q^1,q^2,p_1,p_2)=(q^1e^{p_2q^2/2},q^2e^{-p_1q^1/2}).\nn\eea
\end{lemma}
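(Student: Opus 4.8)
The statement has two independent parts: first that the change of variables $(z^a,\xi_a)\mapsto(q^a,p_a)$ brings $i\Go_0$ from \eqref{hol_symp_P2} into the standard Darboux form $dq^1\wedge dp_1+dq^2\wedge dp_2$, and second that the source and target maps of thm.\ref{thm_int_Poisson_toric} acquire the stated expressions in the new chart. Both are established by direct computation, and the plan is to organise that computation so that the exponential prefactors drop out at the very first step, leaving only a polynomial identity in $z^a,\xi_a$ to check.

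For the symplectic form, I would first compute the four differentials. For instance
\bea
dq^1&=&e^{\xi_2z^2/2}\Big(dz^1+\tfrac{z^1\xi_2}{2}\,dz^2+\tfrac{z^1z^2}{2}\,d\xi_2\Big),\nn\\
dp_1&=&e^{-\xi_2z^2/2}\Big(d\xi_1-\tfrac{\xi_1\xi_2}{2}\,dz^2-\tfrac{\xi_1z^2}{2}\,d\xi_2\Big),\nn
\eea
and similarly for $dq^2,dp_2$ with the factors $e^{\mp\xi_1z^1/2}$. The decisive observation is that within each pair the scalar factors are reciprocal, $e^{\xi_2z^2/2}\cdot e^{-\xi_2z^2/2}=1$ and $e^{-\xi_1z^1/2}\cdot e^{\xi_1z^1/2}=1$, so in the products $dq^1\wedge dp_1$ and $dq^2\wedge dp_2$ the exponentials cancel and one is left with polynomial $2$-forms. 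Expanding and summing them, the extra $dz^2\wedge d\xi_2$ contributions inside the first product cancel among themselves, as do the extra $dz^1\wedge d\xi_1$ contributions inside the second, and collecting the coefficient of each basis $2$-form reproduces exactly the six terms of $i\Go_0$.

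For the source and target maps the key identities are
\bea q^1p_1=z^1\xi_1,\qquad q^2p_2=z^2\xi_2\nn\eea
(again the exponentials cancel in the products). These allow me to re-express the exponents $\xi_2z^2$ and $\xi_1z^1$ that appear in ${\tt s},{\tt t}$ of thm.\ref{thm_int_Poisson_toric} as $q^2p_2$ and $q^1p_1$. Then inverting the defining relations, $z^1=q^1e^{-\xi_2z^2/2}=q^1e^{-p_2q^2/2}$ and $z^2=q^2e^{\xi_1z^1/2}=q^2e^{p_1q^1/2}$, which is precisely the claimed ${\tt s}$; substituting the same two relations into ${\tt t}(g)=(z^1e^{\xi_2z^2},z^2e^{-\xi_1z^1})$ gives $(q^1e^{p_2q^2/2},q^2e^{-p_1q^1/2})$, the claimed ${\tt t}$.

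I expect no conceptual obstacle: the whole lemma is bookkeeping, and the only step where an error can creep in is tracking the signs and index placement in the wedge expansion of the first part. As a sanity check, the resulting Darboux form is manifestly closed and nondegenerate, consistent with the nondegeneracy ($\det\Go=1$) already seen in ex.\ref{toy_model}, and the cancellation of exponentials is the same phenomenon that made the local model there globalise.
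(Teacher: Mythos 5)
Your proposal is correct and is exactly the computation the paper has in mind: the paper's "proof" of this lemma is the single sentence "The proof is a direct computation," and you have simply carried that computation out, with the key cancellations (reciprocal exponential prefactors within each $dq^a\wedge dp_a$ pair, the identities $q^ap_a=z^a\xi_a$, and the mutual cancellation of the quartic $dz^a\wedge d\xi_a$ cross terms) all correctly identified. No gap.
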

The proof is a direct computation.
\begin{remark}
From the Darboux coordinates it is easy to check
\bea \{{\tt t}^*f_1,{\tt s}^*f_2\}_{\Pi}=0,~~~f_{1,2}\in C^{\infty}(M)\nn\eea
which leads to \eqref{orthog_s_t}.

Lem.\ref{lem_Darboux} also gives a simpler check that the symplectic form \eqref{hol_symp_P2} is well-defined globally. Using the same argument as above, the $q^a,p_b$ transform as
  \bea \tilde q^a=\prod_b (q^b)^{A_{ab}},~~~\tilde p_a=(\tilde q^a)^{-1}\sum_b(p_bq^b)(A^{-1})_{ba}.\nn\eea
  Therefore using $\log q^a$ and $p_aq^a$ as coordinates, it is clear
  \bea \sum_ad\tilde q^a\wedge d\tilde p_a=\sum_ad\log \tilde q^a\wedge d(\tilde q^a\tilde p_a)
  =\sum_ad\log q^a\wedge d(q^ap_a)=\sum_adq^a\wedge dp_a.\nn\eea
\end{remark}

\section{GKS on $\BB{CP}^2$}\label{sec_GoCP2}
We put together the ingredients of previous sections and construct a GKS in explicit terms expressed with action-angle coordinates.
The polygon is a triangle with normals $\vec v^1=[1;0]$, $\vec v^2=[0;1]$ and $\vec v^3=[-1;-1]$.
We fix the K\"ahler class by fixing $\gl^1=0=\gl^2$ and $\gl^3=1$ so that $[\go]=[3]$.
\begin{center}
\begin{tikzpicture}[scale=1]
\draw [->] (-.2,0) -- (1.5,0) node[below] {\scriptsize{$y^1$}};
\draw [->] (0,-0.2) -- (0,1.5) node[right] {\scriptsize{$y^2$}};
\draw [-,blue] (0,0) -- (1,0) -- (0,1) -- (0,0);
\node at (1,-.2) {\scriptsize{$1$}};
\node at (-0.2,1) {\scriptsize{$1$}};
\draw [->] (0,0.5) node[left] {\scriptsize{$\vec v^1$}} -- (0.25,0.5);
\draw [->] (0.3,0) node[below] {\scriptsize{$\vec v^2$}} -- (0.3,0.25);
\draw [->] (0.5,0.5) node[right] {\scriptsize{$\vec v^3$}} -- (0.35,0.35);
\end{tikzpicture}
\end{center}

From the general formula prop.\ref{prop_symp_J}, we have
\bea
G=\frac12\sum_{i=1}^3 y^i\log y^i,~~~
G_{ij}=\frac12\left[
                \begin{array}{cc}
                  \frac{1-y^2}{y^1y^3} & \frac{1}{y^3} \\
                  \frac{1}{y^3} & \frac{1-y^1}{y^2y^3} \\
                \end{array}\right],~~~
G^{ij}=2\left[
                \begin{array}{cc}
                  y^1(1-y^1) & -y^1y^2 \\
                  -y^1y^2 & y^2(1-y^2) \\
                \end{array}\right].\nn\eea
where we have let $y^3=1-y^1-y^2$ which turns out quite convenient later on. Now
\bea 4\det(G_{ij})=\frac{1}{y^1y^2y^3}~~~\To~~~||\gs||^2=2y^1y^2y^3\nn\eea
from \eqref{norm_sigma}. This is completely expected since $\gs$ vanishes on a cubic curve, which in the toric invariant case is the product of three lines, corresponding to each of the three faces $\Gd_a=\{y^a=0\}$, $a=1,2,3$.

The complex coordinates \eqref{cplex_coord} are
\bea z^1=e^{i\gt_1+(\log y^1-\log y^3)/2}=(y^1/y^3)^{1/2}e^{i\gt_1},\nn\\
z^2=e^{i\gt_2+(\log y^2-\log y^3)/2}=(y^2/y^3)^{1/2}e^{i\gt_2},\label{cplex_coord_CP2}\eea
they are in fact the standard inhomogeneous coordinates for $\BB{C}P^2$.

The Hitchin Poisson tensor is the imaginary part of $\gs$
\bea Q=4\im \gs=4y^1y^2y^3\partial_{y^1}\wedge\partial_{y^2}-\partial_{\gt_1}\wedge\partial_{\gt_2}.\label{Hitch_Poisson_CP2}\eea

\subsection{Solving the flow}
We pick the Hamiltonian $h$ as
\bea h=-\frac{1}{4}\log (y^1y^2y^3).\nn\eea
The pre-factor is for convenience and can always be absorbed by redefining $t$.
This gives a vector field
\bea V=Q^{\#}dh=y^1(y^2-y^3)\frac{\partial}{\partial y^1}+y^2(y^3-y^1)\frac{\partial}{\partial y^2}\nn\eea
and so the flow equation reads
\bea \dot y^1(t)=y^1(t)(y^2(t)-y^3(t)),~~~\textrm{and cyc perm }(1\to 2\to 3)\label{flow_eqn}.\eea
It pays to keep the symmetry between $y^{1,2,3}$ and so we define the symmetric polynomials
\bea c_1=y^1+y^2+y^3=1,~~~c_2=y^1y^2+y^2y^3+y^3y^1,~~~c_3=y^1y^2y^3.\nn\eea
It is obvious that $c_3$ is a conserved quantity under the flow since $h\sim\log c_3$.
\begin{proposition}
  The flow equation \eqref{flow_eqn} is solved by
  \bea y^k(t)=\frac{c_3}{1/12-\wp(t+k\tf)}\label{sol_y}\eea
  with $\wp$ the Weierstrass elliptic function
  with $\go_{1,2}$ as the \emph{half} periods of a square lattice (fig.\ref{fig_lattice}) while $\tf=(2/3)\go_1$.

  Finally $t$ has constant imaginary part $-i\go_2$ and arbitrary real part \footnote{We made this awkward choice to avoid clutter in later formulae. A more pedantic way would be to write $t$ as $t+\go_2$ for real $t$. But as far as the flow is concerned one only cares about shifts of $t$, which is still real.}.
\end{proposition}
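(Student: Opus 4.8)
The idea is to decouple the system \eqref{flow_eqn} into a single autonomous ODE, recognise it as the Weierstrass equation after a rational change of variable, and then fix the three relative phases by a symmetry together with a $3$-torsion identity.

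I would begin with the conserved quantities: summing the three equations \eqref{flow_eqn} gives $\dot c_1=0$, and a one-line computation (or the fact that the flow is Hamiltonian for $h\sim\log c_3$) gives $\dot c_3=0$. Setting $x:=y^1$ and using $c_1=1$ and the fixed value of $c_3$ to eliminate the other coordinates via $y^2+y^3=1-x$ and $y^2y^3=c_3/x$, I square $\dot x=x(y^2-y^3)$ and insert $(y^2-y^3)^2=(1-x)^2-4c_3/x$ to obtain the autonomous quartic
\bea (\dot x)^2 &=& x^2(1-x)^2-4c_3x = x^4-2x^3+x^2-4c_3x. \nn\eea
By the cyclic symmetry of \eqref{flow_eqn}, each of $y^1,y^2,y^3$ satisfies this same equation.

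Next I would reduce the quartic to Weierstrass form through $x=c_3/(\tfrac1{12}-w)$, i.e. $w=\tfrac1{12}-c_3/x$. Then $\dot w=(c_3/x^2)\dot x$, so $(\dot w)^2=(c_3^2/x^4)(\dot x)^2=c_3^2\big(1-\tfrac2x+\tfrac1{x^2}-\tfrac{4c_3}{x^3}\big)$, and substituting $1/x=(\tfrac1{12}-w)/c_3$ turns the right-hand side into a cubic in $w$. The shift $\tfrac1{12}$ is chosen precisely so the $w^2$ term drops out, leaving $\dot w^2=4w^3-g_2w-g_3$ with $g_2=\tfrac1{12}-2c_3$ and $g_3=\tfrac{c_3}{6}-\tfrac1{216}-c_3^2$, reproducing the stated invariants. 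Hence $w=\wp(t-t_{0,k})$ for the lattice fixed by $(g_2,g_3)$, giving $y^k=c_3/(\tfrac1{12}-\wp(t-t_{0,k}))$ with half-periods $\go_{1,2}$ depending on $c_3$.

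The delicate step, where I expect the real work, is to pin the integration constants $t_{0,k}$. Since $t\to t+\tf$ sends $\wp(t-t_{0,k})$ of one branch to that of a shifted one, I would require this shift to realise the cyclic relabelling $(y^1,y^2,y^3)\to(y^2,y^3,y^1)$ of \eqref{flow_eqn}; by uniqueness of solutions this forces equally spaced phases $t_{0,k}=-k\tf$ with $3\tf\equiv 0\pmod{2\go_1}$, i.e. $\tf=\tfrac23\go_1$, matching \eqref{sol_y}. For this ansatz to obey $\sum_k y^k=1$ and $\prod_k y^k=c_3$ identically in $t$, the elliptic function $\prod_k(\tfrac1{12}-\wp(t+k\tf))$ must be pole-free, hence constant and equal to $c_3^2$. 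The factor with the double pole at $t=-k\tf$ is multiplied there by the two remaining factors, so the pole cancels iff both of them vanish, i.e. iff $\wp(\tf)=\wp(2\tf)=\tfrac1{12}$; since $2\tf\equiv-\tf\pmod{2\go_1}$ this is the single condition $\wp(\tf)=\tfrac1{12}$, the statement that the $3$-torsion point $\tf=\tfrac23\go_1$ has $\wp$-value $\tfrac1{12}$. This torsion identity is the crux; I would verify it by substituting $W=\tfrac1{12}$ into the $3$-division polynomial $3W^4-\tfrac32 g_2 W^2-3g_3W-\tfrac1{16}g_2^2$ and checking that, with the above $g_2(c_3),g_3(c_3)$, the $c_3^2$, linear and constant parts each cancel. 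Finally, to land on the physical closed contour of fig.\ref{fig_contour_intro} I would restrict $t$ to the translate of the real axis by the imaginary half-period $\go_2$ (so $\im t$ is constant, as in the statement), where $\wp$ is real and stays below $\tfrac1{12}$; this keeps every $y^k$ real, finite and inside the moment triangle. The conservation laws and the reduction to Weierstrass form are routine; the phase-fixing and the $3$-torsion identity are the substantive points.
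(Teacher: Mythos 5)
Your reduction to the Weierstrass equation is exactly the paper's: derive $(\dot y^1)^2=(y^1)^2((1-y^1)^2-4c_3/y^1)$ from the conserved quantities, then the substitution $w=\tfrac1{12}-c_3/y^1$ (the paper does it in two steps, $y^1=1/u$ followed by $\wp=\tfrac1{12}-c_3u$) to land on $\dot w^2=4w^3-g_2w-g_3$ with the stated $g_2(c_3),g_3(c_3)$. Where you genuinely diverge is in fixing the relative phases. The paper analytically continues to the real axis, lets $t+t_3\to\tf$ where $y^3\to\infty$, and reads off from $\sum y^i=1$, $\prod y^i=c_3$ that the other two phases must be $t_3\pm\tf$; you instead invoke the cyclic symmetry of \eqref{flow_eqn} to force equal spacing $3\tf\equiv0\pmod{2\go_1}$ and then use the pole-cancellation/$3$-torsion identity $\wp(\tf)=\tfrac1{12}$ as the consistency check. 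Your route is arguably more conceptual, and your division-polynomial verification is precisely the paper's lem.\ref{lem_1/12} (the quartic \eqref{quartic} factors with $\tfrac1{12}$ as a root). Both approaches need the same supporting facts and end in the same place.

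Three loose ends you should tie up. First, equal spacing leaves a two-fold ambiguity: $\tf=+\tfrac23\go_1$ versus $\tf=-\tfrac23\go_1$, i.e.\ which of the two cyclic orderings of $(y^1,y^2,y^3)$ is realised by the \emph{forward} time shift. This is not decided by your symmetry argument or by the constraints (both orderings satisfy $\sum y^k=1$, $\prod y^k=c_3$); the paper settles it by checking the direction of the flow on the contour of fig.\ref{fig_contour_CP2}, and you would need the same (or an equivalent sign check of $\dot y^1=+y^1(y^2-y^3)$ at one point). Second, the division polynomial has four roots $\wp(\tfrac23\go_i)$, and showing $\tfrac1{12}$ is among them does not yet identify it as $\wp(\tfrac23\go_1)$ — the real $3$-torsion point on the segment $[0,2\go_1]$ — rather than $\wp(\tfrac23\go_2)$; the paper's lem.\ref{lem_1/12} needs an ordering argument on $e_1>e_3>e_2$ for this. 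Third, your final restriction to $\im t=\im\go_2$ presupposes that $\wp$ is real and $<\tfrac1{12}$ there, which requires knowing the lattice is rectangular (square); the paper establishes this from the modular invariant \eqref{j_invariant} lying in $[1728,\infty)$. None of these is a wrong turn, but as written the plan does not yet exclude the swapped solution $y^1\leftrightarrow y^2$, which solves every condition you impose.
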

\begin{proof}
We can solve from $y^2+y^3=1-y^1$, $y^2y^3=c_3/y^1$ that
\bea y^2-y^3=\pm((1-y^1)^2-4c_3/y^1)^{1/2}\nn\eea
and hence
\bea (\dot y^1)^2=(y^1)^2((1-y^1)^2-4c_3/y^1)=y^1(y^1(1-y^1)^2-4c_3).\nn\eea
The rhs is a quartic polynomial with a root at $y^1=0$, which hints at changing variables $y^1=1/u$ so one can turn the quartic into a cubic
\bea (\dot u)^2=u^2-2u+1-4c_3u^3.\nn\eea
Further letting $\wp=1/12-c_3u$ gives
\bea \dot{\wp}^2=4\wp^3-g_2\wp-g_3=4\wp^3-(\frac{1}{12}-2c_3)\wp-(\frac{c_3}{6}-\frac{1}{216}-c_3^2).\nn\eea

The solution is the celebrated Weierstrass $\wp$ function. It is a doubly periodic meromorphic function with period lattice $2\Go$ generated by $2\go_1,2\go_2\in\BB{C}$. It is a convention that the lattice is $2\Go$ while $\go_{1,2}$ are the half periods, see fig.\ref{fig_lattice}.
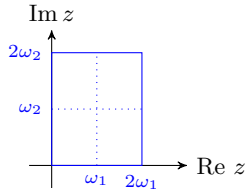
\begin{figure}[h]
\begin{center}
\begin{tikzpicture}
\draw [->] (-.3,0) -- (1.8,0) node [right] {\small$\re z$};
\draw [->] (0,-0.3) -- (0,1.8) node [above] {\small$\im z$};

\draw [-,blue] (0,0) -- (1.2,0) node [below] {\scriptsize$2\go_1$}  -- (1.2,1.5) -- (0,1.5) node [left] {\scriptsize$2\go_2$} -- (0,0);
\draw [dotted,blue] (0.6,0) node [below] {\scriptsize$\go_1$} -- (.6,1.5);
\draw [dotted,blue] (0,0.75) node [left] {\scriptsize$\go_2$} -- (1.2,0.75);

\end{tikzpicture}
\caption{A square lattice with periods $2\go_{1,2}$.}\label{fig_lattice}
\end{center}
\end{figure}
We compute the modular invariant \eqref{j_invariant} (see the appendix), which shows that the lattice is a square lattice.
For such a lattice, the real loci are the lattice lines of $\Go$.
We have the solution
\bea y^i(t)=\frac{c_3}{1/12-\wp(t+t_i)}\nn\eea
where $t_i$ is real and fixed using the initial data. In order that $y^i$ be real, the $t$ must have imaginary part equal to 0 or $\go_2$.
It cannot be the former option, since then $t+t_i$ can reach the point $\tf$ where $\wp(\tf)=1/12$ (see lem.\ref{lem_1/12}) and $y^i$ is unbounded. For the latter option, the solution is bounded and periodic with period $2\go_1$. Indeed starting from $0^+$ (where $\wp=+\infty$) going along straight lines via $\go_1$, $\go_1+\go_2$, $\go_2$ and reaching $i0^+$ (where $\wp=-\infty$), the function $\wp$ is monotonically decreasing. As $1/12$ is reached at $(2/3)\go_1$, one has $\wp<1/12$ along $z=\go_2$.
This is reviewed in sec.\ref{sec_TWwpf}.

To further relate $t_i$, we can analytically continue the solution down to the real axis and then approach $t+t_3=\tf$ from the right, where $y^3\to\infty$. But the conditions $\sum y^i=1$, $\prod y^i=c_3$ forces one of $y^{1,2}$ to go to $0^-$ and $-\infty$ respectively.
We pick the option $y^1\to-\infty$ and $y^2\to 0^-$ first. In order for this to happen one has to have
\bea t_1=t_3-2\tf\stackrel{\textrm{\scriptsize{mod~$2\Go$}}}{\sim} t_3+\tf,~~t_2=t_3-\tf.\nn\eea
Indeed, this way $t+t_1$ approaches $-\tf$ from the right giving $y^1\to-\infty$, and $t+t_2$ approaches $0$ from the right giving $y^2\to 0^-$. 

The other option would simply swap $t_1$ and $t_2$, to show that this is not the case, we look at fig.\ref{fig_contour_CP2}.
\begin{figure}[h]
\begin{center}
\includegraphics[width=3cm]{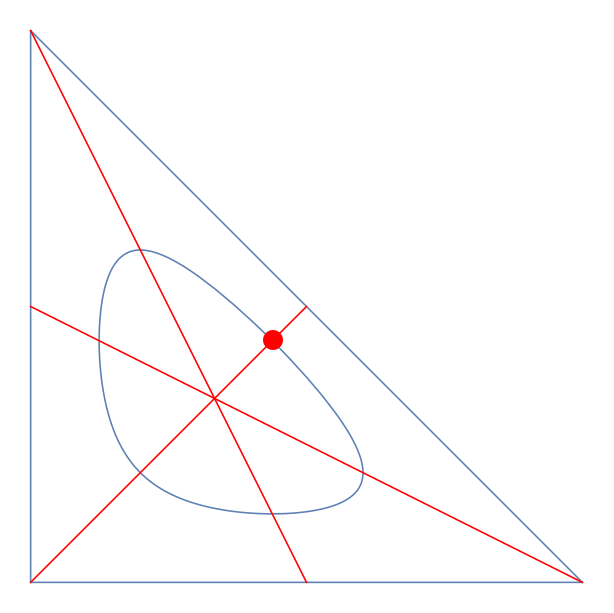}
\caption{The constant $c_3$ contour in the plane $y^{1,2}\in [0,1]$, $y^1+y^2\leq 1$.}\label{fig_contour_CP2}
\end{center}
\end{figure}
The figure shows the contour of fixed $c_3$, and the flow goes clockwise along this contour, as can be seen from \eqref{flow_eqn}.
The three red lines cut the triangle into six regions depending on the relative sizes of $y^i$.
With the first option, pick, say, $t_3=0$, i.e. one starts the flow at $y^1=y^2>y^3$ (the red dot). Increasing $t$, one has $y^3,y^1$ increase while $y^2$ decreases until one reaches $t=\go_2+\tf/2$, where $y^1>y^3=y^2$. This means one moves down into the east triangle and reaches the boundary with the southeast triangle at $t=\go_2+\tf/2$. However, had one picked the other option, one would swap $y^{1,2}$ and one would be moving up into the north east triangle. This does not agree with the direction of the flow.
\end{proof}
\begin{remark}
  It is instructive to see that the solution satisfies $\prod y^i=c_3$. Indeed, the product
  \bea (\frac1{12}-\wp(z))(\frac1{12}-\wp(z-\tf))(\frac1{12}-\wp(z+\tf))\nn\eea
  is doubly periodic without poles or zeros: for example, the first factor has a double pole at $z=0$, but the other two both have a single zero at $z=0$. Thus the product must be a constant, which can be obtained by setting $z$ at a convenient point. We choose $z=w_1$ and in lem.\ref{lem_special_point} we show that the constant is $c_3^2$.
\end{remark}
The solution involves the inversion of $\wp$ function, we would like to avoid that by expressing elliptic functions with Weierstrass zeta functions. These are recalled briefly in the appendix.
\begin{lemma}\label{lem_inv}
  One can express $y^i$ as Weierstrass zeta functions (where $\tf=2/3\, \go_1$)
  \bea && y^3(t)=\zeta(t-\tf)-\zeta(t+\tf)+2\zeta(\tf),\nn\\
  &&y^1(t)=\zeta(t)-\zeta(t-\tf)+\frac12-\zeta(\tf),\nn\\
  &&y^2(t)=\zeta(t+\tf)-\zeta(t)+\frac12-\zeta(\tf)\nn.\eea
\end{lemma}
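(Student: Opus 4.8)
The plan is to feed the closed form \eqref{sol_y}, $y^k(t)=c_3/(\tfrac1{12}-\wp(t+k\tf))$, into a single ``partial fractions'' identity for the Weierstrass $\zeta$-function and then bookkeep its quasi-periods. First I would reduce the three shifts modulo the lattice. Since $3\tf=2\go_1$ is a full period, $\wp(t+3\tf)=\wp(t)$ and $\wp(t+2\tf)=\wp(t-\tf)$, so the three denominators involve $\wp(t)$, $\wp(t+\tf)$ and $\wp(t-\tf)$. Using $\wp(\tf)=\tfrac1{12}$ (lem.\ref{lem_1/12}), each denominator becomes $\wp(\tf)-\wp(t+\sbullet)$, which is exactly the shape handled by the identity below.

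The engine is the classical addition theorem $\zeta(u+v)-\zeta(u)-\zeta(v)=\tfrac12\big(\wp'(u)-\wp'(v)\big)/\big(\wp(u)-\wp(v)\big)$. Setting $v=\pm a$ and subtracting gives the master identity
\bea \zeta(z-a)-\zeta(z+a)+2\zeta(a)=\frac{\wp'(a)}{\wp(z)-\wp(a)}.\nn\eea
One checks both sides are elliptic with simple poles of residues $\pm1$ at $z=\pm a$, and the constant $2\zeta(a)$ is fixed by the addition theorem. I also need $\wp'(\tf)$: evaluating $\dot{\wp}^2=4\wp^3-g_2\wp-g_3$ at $\wp=\tfrac1{12}$ with the stated $g_2,g_3$ makes every numerical and every $c_3$-linear term cancel, leaving $\wp'(\tf)^2=c_3^2$, and the monotonicity of $\wp$ along $z=\go_2$ used in the previous proposition fixes the sign $\wp'(\tf)=-c_3$. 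Putting $a=\tf$, $z=t$ and substituting $\wp'(\tf)=-c_3$, $\wp(\tf)=\tfrac1{12}$ reproduces the stated $y^3$ verbatim.

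For $y^1,y^2$ I would apply the master identity at $z=t+\tf$ and $z=t-\tf$ (still $a=\tf$), obtaining $\zeta(t)-\zeta(t+2\tf)+2\zeta(\tf)$ and $\zeta(t-2\tf)-\zeta(t)+2\zeta(\tf)$. Here the quasi-periodicity enters: with $\eta_1:=\zeta(\go_1)$ one has $\zeta(w+2\go_1)=\zeta(w)+2\eta_1$, and since $2\tf\equiv-\tf\pmod{2\go_1}$ this gives $\zeta(t+2\tf)=\zeta(t-\tf)+2\eta_1$ and $\zeta(t-2\tf)=\zeta(t+\tf)-2\eta_1$. Hence $y^1=\zeta(t)-\zeta(t-\tf)+2\zeta(\tf)-2\eta_1$ and $y^2=\zeta(t+\tf)-\zeta(t)+2\zeta(\tf)-2\eta_1$. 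To identify the constant $2\zeta(\tf)-2\eta_1$ with $\tfrac12-\zeta(\tf)$, rather than proving the third-division-point relation $3\zeta(\tf)-2\eta_1=\tfrac12$ in isolation, I would read it off from the conserved $c_1=y^1+y^2+y^3=1$: summing the three expressions, all $\zeta(t)$ and $\zeta(t\pm\tf)$ cancel and one is left with $6\zeta(\tf)-4\eta_1=1$, i.e. $2\zeta(\tf)-2\eta_1=\tfrac12-\zeta(\tf)$, which converts $y^1,y^2$ into their claimed forms.

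The main obstacle is not any one computation but the $\zeta$ bookkeeping. Because $\zeta$ is only quasi-periodic, the shifts by $2\tf$ unavoidably generate $\pm2\eta_1$ terms, and the correct additive constants are pinned down not by an elliptic-function identity but by the normalisation $\sum_k y^k=1$; this is the conceptual point to get right. The remaining care is purely with signs — in $\wp'(\tf)=-c_3$ and in the two reductions $2\tf\equiv-\tf$ and $-2\tf\equiv\tf\pmod{2\go_1}$ — since a single slip there flips $y^1\leftrightarrow y^2$ or the sign of $y^3$.
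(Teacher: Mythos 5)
Your proposal is correct and follows essentially the same route as the paper: the inversion/addition formula $\zeta(u-v)-\zeta(u+v)+2\zeta(v)=\wp'(v)/(\wp(u)-\wp(v))$ applied with $\wp(\tf)=\tfrac1{12}$, $\wp'(\tf)=-c_3$, followed by the quasi-periodicity $\zeta(z+2\go_1)=\zeta(z)+2\eta_1$ to handle the $\pm2\tf$ shifts. The only (harmless) deviation is that you pin down the additive constant from the normalisation $\sum_k y^k=1$, whereas the paper proves the equivalent identity $\zeta(\tf)=\tfrac16+\tfrac23\eta_1$ directly via the duplication formula in lem.\ref{lem_spec_value_zeta}.
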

\begin{proof}
We use the inversion formula (48.2 \cite{du_val_1973})
\bea \frac{\wp'(v)}{\wp(u)-\wp(v)}=\zeta(u-v)-\zeta(u+v)+2\zeta(v).\nn\eea
This is a special case of a broader formula that expresses any elliptic function using the Weierstrass $\zeta$ function, see thm.6.2. \cite{du_val_1973}. We show in lem.\ref{lem_1/12} that $\wp(\tf)=1/12$, and $\wp'(\tf)=-c_3$, the result for $y^3$ follows.
For $y^1$, one would get from the same formula that $y^1(t)=\zeta(t)-\zeta(t+2\tf)+2\zeta(\tf)$. Then one uses the periodicity $\zeta(z+2\go_i)=\zeta(z)+2\eta_i$ and also $\zeta(\tf)=1/6+2/3\,\eta_1$ from lem.\ref{lem_spec_value_zeta}.
\end{proof}

\subsection{The generalised K\"ahler potential}
In this section, we narrow down to $M=\BB{C}P^2$.
\begin{proposition}\label{prop_GKP}
  The GKP reads
  \bea K=\frac{t}{4}\log (c_3)+\frac{3}{8}\int_0^tdt\sum_iy^i_t\log\frac{y^i_t}{y^i_0}\label{GKP_CP2}\eea
  where $y^i_0=y^i(t_0)$, $y^i_t=y^i(t+t_0)$ and $y^i(z)$ is given in \eqref{sol_y}.
\end{proposition}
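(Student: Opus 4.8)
The plan is to read off the Lagrangian $L_t=\hat\phi_t(L_0)$ in the Darboux chart of Lemma~\ref{lem_Darboux}, where $L_0$ is the zero section (space of units), and then compute its generating function. Since the lifted flow preserves the source and pushes the target by $\phi_t$ (eq.~\eqref{used_III}), a unit over $z_0=(z^1,z^2)$ flows to the groupoid point with the same base $z^a$ and with fibre $\xi_a$ fixed by ${\tt t}=(z^1e^{\xi_2z^2},z^2e^{-\xi_1z^1})=\phi_t(z_0)$. Because $\phi_t$ fixes the angles $\gt_i$ and moves only the $y^i$ (the vector field $V$ has no $\partial_{\gt_i}$ component), using \eqref{cplex_coord_CP2} the combinations $A_a:=\xi_az^a=p_aq^a$ come out real,
\[ A_1=\tfrac12\log\frac{y^2_0y^3_t}{y^3_0y^2_t},\qquad A_2=\tfrac12\log\frac{y^1_ty^3_0}{y^3_ty^1_0}, \]
with $y^i_0=y^i(t_0)$ the source and $y^i_t=y^i(t_0+t)$ the target moment coordinates. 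The Liouville form $\lambda=\sum_ap_a\,dq^a$ (so $\Go_0=i\,d\lambda$) restricts on $L_t$ to $\eta=\sum_aA_a\,d\log q^a$; as $\arg q^a=\gt_a$ is constant along the flow, $\re\eta=\sum_aA_a\,dB_a$ with $B_a=\re\log q^a=\tfrac14\log\!\big(y^a_0y^a_t/(y^3_0y^3_t)\big)$, and the GKP obeys $\re\eta=-dK$.

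The obstacle is that the pure $M$-differential $\re\eta=-dK$ determines $K$ only up to a $t$-dependent constant, so I would instead integrate along the flow. Here $\im\Go_0=d(\re\lambda)$ is a genuine real symplectic form, $L_t$ is Lagrangian for it (since $F$ is real, $\im\Go_0|_{L_t}=0$ by \eqref{used_IV}), and $\hat\phi_t$ is the Hamiltonian flow generated up to sign by ${\tt t}^*h$. The Hamilton--Jacobi relation along a trajectory $x(\tau)=\hat\phi_\tau(x_0)$ over a fixed source then gives
\[ K=-\int_0^t\big[{\tt t}^*h+\re\big(\textstyle\sum_ap_a\dot q^a\big)\big]\,d\tau, \]
where the signs are pinned by Proposition~\ref{prop_used_III} ($\dot K|_0=-{\tt t}^*h$, consistent because $\re(\sum_ap_a\dot q^a)$ vanishes on the units where $p_a=0$) together with $K|_{t=0}=0$. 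Since $c_3$ is conserved, ${\tt t}^*h=h(y^i_t)=-\tfrac14\log c_3$ is constant along the flow and produces the first term $\tfrac{t}{4}\log c_3$.

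It remains to evaluate $\re(\sum_ap_a\dot q^a)=\sum_aA_a\dot B_a$, the dot being $d/d\tau$ on the target. Using the flow equations \eqref{flow_eqn} as $\dot y^1/y^1=y^2-y^3$ (and cyclic) I get $\dot B_1=\tfrac14(3y^2_t-1)$, $\dot B_2=\tfrac14(1-3y^1_t)$, while $A_1=\tfrac12(\ell^3-\ell^2)$, $A_2=\tfrac12(\ell^1-\ell^3)$ with $\ell^i:=\log(y^i_t/y^i_0)$. The conservation of $c_3$ gives $\sum_i\ell^i=0$ and $\sum_iy^i_t=1$, which collapse all cross terms:
\[ \sum_aA_a\dot B_a=\tfrac18\sum_i\ell^i\big(1-3y^i_t\big)=-\tfrac38\sum_iy^i_t\log\frac{y^i_t}{y^i_0}. \]
Substituting ${\tt t}^*h=-\tfrac14\log c_3$ and this identity into the action integral reproduces exactly \eqref{GKP_CP2}. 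The genuinely delicate points are the sign/convention bookkeeping in the Hamilton--Jacobi step (fixed cleanly by matching Proposition~\ref{prop_used_III}) and the harmless additive constant in $K$ (the GKP being defined up to a constant); the rest is the explicit algebraic cancellation above.
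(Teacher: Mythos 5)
Your proposal is correct and reproduces \eqref{GKP_CP2}, but the key step is organised differently from the paper. Both arguments begin identically: restrict the Liouville form $\sum_aP_a\,dQ^a$ to $L_t$ using the pair--groupoid (source/target) description, and use the fact that the flow fixes the angles so that the combinations $P_aQ^a=\xi_az^a$ are real on $L_t$; your $\sum_aA_a\,dB_a$ is exactly the paper's $\re\eta|_{L_t}$. The divergence is in how the primitive is produced. The paper splits $8\re\eta|_{L_t}$ into a difference of pullbacks $\phi_t^*\xi-\xi$ plus an exact cross term, and integrates each along the flow on $M$ via Cartan's formula $L_V=d\iota_V+\iota_Vd$, computing $\iota_V\xi$ and $\iota_Vd\xi$ explicitly. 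You instead invoke the classical action/generating-function formula for the flow of the Hamiltonian $\pm{\tt t}^*h$ with respect to the real symplectic form $\im\Go_0=d(\re\lambda)$ applied to the zero section, $K=-\int_0^t\big[{\tt t}^*h+\re(\sum_ap_a\dot q^a)\big]d\tau$. Your route buys two things: the $t$-dependent additive constant is fixed automatically (so that $K|_{t=0}=0$ and $\dot K|_0=-{\tt t}^*h$, i.e.\ Proposition \ref{prop_used_III} drops out as the linear term rather than being a separate check), and the origin of the $\frac{t}{4}\log c_3$ term is transparent as the $-\int H$ part of the action. Its one delicate point is the overall sign of the Hamiltonian, which depends on the convention for $(\im\Go_0)^{-\#}$; pinning it by matching Proposition \ref{prop_used_III} is legitimate since the ambiguity is a single global sign. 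The concluding algebra --- $\dot B_1=\frac14(3y^2_t-1)$, $\dot B_2=\frac14(1-3y^1_t)$ from the flow equation, and the collapse of cross terms using $\sum_i\ell^i=0$ and $\sum_iy^i_t=1$ --- is verified and yields the same integrand $-\frac38\sum_iy^i_t\log(y^i_t/y^i_0)$ as the paper.
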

\begin{proof}
  Referring to fig.\ref{fig_cartoon}, the Darboux coordinates $Q,P$ for $\Go_0$ at $L_t$ will pullback under $\hat\phi_t$ to $Q_t,P_t$, the Darboux coordinates for $\Go_t$. Since
  \bea \Go_0=id\eta=id(P_idQ^i),\nn\eea
  we have $\re\eta$ is closed when restricted to $L_t$.

  To write down its real part in an efficient way, we exploit the fact that in an open dense subset $X$ is the pair groupoid. We can therefore use the source and target coordinates $z^a$ and $\hat z^a$ to parametrise $X$, and write
  $\Go_0$ and $\eta$ as follows. From the source and target maps in thm.\ref{thm_int_Poisson_toric}
  \bea \frac{\hat z^1}{z^1}=e^{P_2Q^2},~~z^1\hat z^1=(Q^1)^2,~~~\frac{\hat z^2}{z^2}=e^{-P_1Q^1},~~z^2\hat z^2=(Q^2)^2\nn\eea
  therefore
  \bea \eta&=&\sum_aP_adQ^a=\sum_a(P_aQ^a)d\log Q^a=-\frac12\log\frac{\hat z^2}{z^2}d\log (z^1\hat z^1)
  +\frac12\log\frac{\hat z^1}{z^1}d\log (z^2\hat z^2)\nn\\
  &=&\frac12(\log\hat z^{[1} d\log \hat z^{2]}-\log z^{[1} d\log z^{2]})+\frac12d(\log \hat z^1 \log z^2-\log z^1\log \hat z^2)\nn\eea
  where $[a\ b]$ means anti-symmetrisation of indices $a,b$.

  The second term is already exact, we focus on the first term and its real part
  \bea \re(\log\hat z^{[1} d\log \hat z^{2]}-\log z^{[1} d\log z^{2]})=
  \log |\hat z^{[1}|d\log |\hat z^{2]}|-\log|z^{[1}|d\log |z^{2]}|-\arg\hat z^{[1} d\arg \hat z^{2]}+\arg z^{[1} d\arg z^{2]}.  \nn\eea
  Note that in the $z,\bar z$ coordinates, restricting to $L_t$ means setting $\phi^*_t\hat z^a=z^a$. But we see from the flow equation \eqref{flow_eqn} and \eqref{cplex_coord_CP2} that the phases of the complex coordinates are constants under the flow. Therefore
  \bea 8\re\eta|_{L_t}=\log |\hat z^{[1}|^2 d\log |\hat z^{2]}|^2-\log |z^{[1}|^2 d\log |z^{2]}|^2+d(\log |\hat z^1|^2\log |z^2|^2-\log|z^1|^2\log|\hat z^2|^2).\label{used_I}\eea
  We focus on finding the primitive of the first term.
  We denote by
  \bea \xi=\log |z^{[1}|^2 d\log |z^{2]}|^2\nn\eea
  and the first term on the rhs of \eqref{used_I} is just $\phi_t^*\xi-\xi$. We have
  \bea \phi_t^*\xi-\xi=\int_0^t dt \dot\phi_t^*\xi=\int_0^t dt \phi_t^*L_V\xi=\int_0^t dt \phi_t^*(\iota_Vd+d\iota_V)\xi.\nn\eea
  We can evaluate explicitly
  \bea
  \iota_V\xi&=&\log|z^1|^2 (1-3y^1)+\log|z^2|^2 (1-3y^2)=\log\frac{y^1}{y^3}(1-3y^1)+\log\frac{y^2}{y^3} (1-3y^2)\nn\\
  &=&\sum_{i=1}^3(1-3y^a)\log y^a,\nn\\
  \iota_V d\xi&=&2\iota_V(d\log|z^1|^2\wedge d\log |z^2|^2)=2(3y^2-1)d\log\frac{y^2}{y^3}+2(3y^1-1)d\log\frac{y^1}{y^3}\nn\\
  &=&\frac{2(3y^2-1)}{y^2y^3}(y^2dy^1+(1-y^1)dy^2)+\frac{2(3y^1-1)}{y^1y^3}(y^1dy^2+(1-y^2)dy^1),\nn\\
  &=&2dy^1\frac{y^1-y^3}{y^1y^3}+2dy^2\frac{y^2-y^3}{y^2y^3}=-2d\log (y^1y^2y^3).\nn\eea
  In the computation we have used
  \bea |z^1|^2=\frac{y^1}{y^3},~~V\circ |z^1|^2=\frac{y^1}{y^3}(3y^2-1),\nn\\
  |z^2|^2=\frac{y^2}{y^3},~~V\circ |z^2|^2=\frac{y^2}{y^3}(1-3y^1).\nn\eea
  Altogether
  \bea \phi_t^*\xi-\xi=d\int_0^t(-3\sum_iy_t^i\log y_t^i-\log (y^1_0y^2_0y^3_0))\nn\eea
  recall that $y^1y^2y^3$ is a conserved quantity under the flow.

  The second term of \eqref{used_I} is already exact, but we can write it also as an integral. Let
  \bea g(t)=\log |\hat z^1|^2\log |z^2|^2-\log|z^1|^2\log|\hat z^2|^2\nn\eea
  differentiating and integrating
  \bea g(t)-g(0)=\int_0^t (3y^2_t-1)\log \frac{y^2_0}{y^3_0}+(3y^1_t-1)\log \frac{y^1_0}{y^3_0}
  =\int_0^t \sum_{i=1}^3 (3y^i_t-1)\log y_0^i.\nn\eea
  Putting everything together
  \bea 8\re\eta=d\int_0^tdt\big(-3\sum_iy_t^i\log\frac{y_t^i}{y^i_0}-2\log (y^1_0y^2_0y^3_0)\big).\nn\eea
\end{proof}
\begin{remark}
  First note the leading term of \eqref{GKP_CP2} agrees with prop.\ref{prop_used_III}.

  The GKP is not a global function, indeed our expression is ill-defined whenever $y^i=0$. However the ill-defined term resides only in the leading term which is proportional to the K\"ahler potential of the Fubini-Study metric. The correction term can be extended to $y^i=0$ because the log has always the combination $y^i_t/y^i_0$.

  Unfortunately we do not manage to perform the integral. A possible strategy is to relate $\gs$ to the theta functions as in \eqref{sigma_theta} and use the infinite product formula for the latter to deal with the logarithm.
\end{remark}
Just knowing the generalised K\"ahler potential is not enough, one needs to know the local complex coordinates $Q,\bar Q$.
But this can be read off from the source and target maps. Again let us stay away from the anti-canonical divisor where $\gs$ vanishes. Then
\bea Q^1=(\frac{y_0^1y_t^1}{y^3_0y^3_t})^{1/2}e^{i\gt_1},~~~Q^2=(\frac{y_0^2y_t^2}{y^3_0y^3_t})^{1/2}e^{i\gt_2}.\nn\eea
When $t=0$, they revert back to the standard complex coordinates.

\subsection{Explicit complex structures}
Our flow $\phi_t$ is solved by elliptic functions, so to compute e.g. the pullback map $\phi_t^*$, one needs to differentiate $\phi_t$ with respect to the initial points $y^{1,2}_0$, where the notation is as in prop \ref{prop_GKP}. These initial values determine $c_3$ and so affect the flow in two ways
\begin{enumerate}
  \item through the shift $\tf=2/3\go_1$ given in \eqref{used_VII}
  \item through the dependence of $\wp,\zeta,\gs$ on $c_3$ (via $g_2$ and $g_3$ as in \eqref{g23_ours}) given in \eqref{used_VI}.
\end{enumerate}
To compute these derivatives, it is beneficial to preserve the symmetry between $y^{1,2,3}$. So we opt to use the coordinates $c_3$ and the $t$ to parametrise the triangle base of $\BB{C}P^2$, see fig.\ref{fig_contour_CP2}, where $c_3$ determines the contours and $t$ parametrises each contour. However, the $t$-variable has a $c_3$-dependent period $2\go_1$, so we re-scale $t$ in order that all the contours are of period $2\pi$
\bea t=\go_2+\frac{s\go_1}{\pi}.\label{new_time_coord}\eea
This has a small price that $s$ is not a good variable for $c_3=0$ or $1/27$ where the $\go_1$ or $\go_2$ goes to infinity.
To summarise, the $c_3$ and $s$ coordinates are the 'polar' coordinates of the triangle, with $c_3\in[0,1/27]$ functioning as radius while $s\in[0,2\pi]$ as the angle.

The initial values $y^1_0,y^2_0$ of the flow will be expressed in terms of the new coordinates $c_3,s_0$.
We compute the following partial derivatives in the appendix (see lem.\ref{lem_partial_c_3})
\bea
 \partial_{c_3}y^i(s)=\frac{1}{c_3(1-27c_3)}\bigg\{\frac12((y^i)^2-y^i)-9c_3y^i+6c_3
+3y^i(y^{i+1}-y^{i+2})\vgs(s+(i-3)\frac{2\pi}{3})\bigg\},\nn\eea
where the index $i$ is taken mod 3 and the function $\vgs(z)$ is defined as
\bea \varsigma(z)=\zeta(\go_2+\frac{z\go_1}{\pi})-\eta_2-\frac{z}{\pi}\eta_1.\nn\eea
It is real of period $2\pi$ for \emph{real} $z$. See def.\ref{def_vgs} for further properties of this function.
With these we have
\begin{proposition}\label{prop_main}
  In the $c_3,s$ coordinate system, the Hitchin Poisson structure \eqref{Hitch_Poisson_CP2} reads
  \bea Q=\frac{4c_3\pi}{\go_1}\partial_{c_3}\wedge\partial_s-\partial_{\gt_1}\wedge\partial_{\gt_2}.\nn\eea
  The standard complex structure reads
  \bea &&\bra d\gt_i,J\partial_s\ket=\left[
       \begin{array}{cc}
         J^{\gt_1}_{~\,s} \\
         J^{\gt_2}_{~\,s} \\
       \end{array}\right]=\frac{\go_1}{2\pi}\left[
       \begin{array}{cc}
         3y^2-1 \\
         1-3y^1 \\
       \end{array}\right],~~~\bra dc_3,J\partial_{\gt_i}\ket=\left[
                                                               \begin{array}{cc}
                                                                 J^{c_3}_{~\gt_1} & J^{c_3}_{~\gt_2} \\
                                                               \end{array}
                                                             \right]=\frac{4\pi c_3}{\go_1}\left[
                                                 \begin{array}{cc}
                                                   -J^{\gt_2}_{~\;s} & J^{\gt_1}_{~\;s} \\
                                                 \end{array}\right],\nn\\
       &&\bra d\gt_i,J\partial_{c_3}\ket
                           =\left[
       \begin{array}{cc}
         J^{\gt_1}_{~\,c_3} \\
         J^{\gt_2}_{~\,c_3} \\
       \end{array}\right]=\frac{1}{4c_3(1-27c_3)}\left[
                           \begin{array}{c}
                             1+3y^2- 18 y^1y^2 -6\vgs(s)(1-3y^2)  \\
                             1+3y^1- 18 y^1y^2 +6\vgs(s)(1-3y^1)  \\
                           \end{array}\right],\nn\\
&&\bra ds,J\partial_{\gt_i}\ket=\left[\begin{array}{cc}
                                                 J^s_{~\gt_1} & J^s_{~\gt_2} \end{array}\right]=\frac{4c_3\pi}{\go_1}\left[
                                                 \begin{array}{cc}
                                                   J^{\gt_2}_{~c_3} & -J^{\gt_1}_{~c_3} \end{array}\right]\nn\eea
where $y^k(s)$ read
\bea y^k(s)=\frac{c_3}{\frac{1}{12}-\wp(\go_2+\frac{\go_1}{\pi}(s+\frac{2\pi k}{3}))}.\nn\eea
\end{proposition}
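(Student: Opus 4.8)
The plan is to read the proposition as a change of variables on the base of the toric fibration, from the action coordinates $(y^1,y^2)$ to the ``polar'' coordinates $(c_3,s)$ of \eqref{new_time_coord}, with the angles $\gt_1,\gt_2$ left untouched, and then to push the tensors $Q$ of \eqref{Hitch_Poisson_CP2} and $J$ of \eqref{comp_str} through this change of variables. The single geometric fact that drives everything is that $s$ is a rescaling of the flow parameter: since $t=\go_2+s\go_1/\pi$ and $\phi_t$ is generated by $V=Q^{\#}dh$, one has $\partial_s=\tfrac{\go_1}{\pi}V=\tfrac{\go_1}{\pi}(V^1\partial_{y^1}+V^2\partial_{y^2})$, with $V^1=y^1(y^2-y^3)$, $V^2=y^2(y^3-y^1)$ read off from \eqref{flow_eqn}. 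Two elementary identities follow that I will use repeatedly: $\partial_{y^1}c_3=V^2$ and $\partial_{y^2}c_3=-V^1$ (direct differentiation of $c_3=y^1y^2(1-y^1-y^2)$), so that the base Jacobian $M=\partial(c_3,s)/\partial(y^1,y^2)$ has first row $(V^2,-V^1)$, and hence $\det M=V^1\partial_{y^1}s+V^2\partial_{y^2}s=V(s)=\tfrac{\pi}{\go_1}$ while $V(c_3)=0$ confirms $c_3$ is conserved.

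The Poisson structure is then immediate. As a bivector on the base, $\partial_{c_3}\wedge\partial_s=(\det M)^{-1}\partial_{y^1}\wedge\partial_{y^2}=\tfrac{\go_1}{\pi}\partial_{y^1}\wedge\partial_{y^2}$, so $Q|_{\mathrm{base}}=4c_3\,\partial_{y^1}\wedge\partial_{y^2}=\tfrac{4\pi c_3}{\go_1}\partial_{c_3}\wedge\partial_s$, and the $\partial_{\gt_1}\wedge\partial_{\gt_2}$ term is unchanged; this is the claimed formula.

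Next I compute the ``base $\to$ fibre'' block of $J$, that is $J^{\gt_i}_{~s}$ and $J^{\gt_i}_{~c_3}$, directly from \eqref{comp_str}, using $J\partial_{y^k}=\sum_i G_{ik}\partial_{\gt_i}$ with the explicit $G_{ij}$ of \eqref{induced_metric}. For $J\partial_s$ I substitute $\partial_s=\tfrac{\go_1}{\pi}(V^1\partial_{y^1}+V^2\partial_{y^2})$; a short simplification using $y^1+y^2+y^3=1$ reduces $V^1G_{i1}+V^2G_{i2}$ to $\tfrac12(3y^2-1)$ for $i=1$ and $\tfrac12(1-3y^1)$ for $i=2$, giving $J^{\gt_i}_{~s}=\tfrac{\go_1}{2\pi}(3y^2-1),\ \tfrac{\go_1}{2\pi}(1-3y^1)$. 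For $J\partial_{c_3}=\sum_i(\partial_{c_3}y^1\,G_{i1}+\partial_{c_3}y^2\,G_{i2})\partial_{\gt_i}$ I insert the derivatives $\partial_{c_3}y^j$ of lem.\ref{lem_partial_c_3}; the polynomial pieces combine, again through $\sum_iy^i=1$ and $\prod_iy^i=c_3$, into the rational terms $1+3y^2-18y^1y^2$ (resp.\ $1+3y^1-18y^1y^2$), while the $\vgs$ pieces, carrying coefficients $3y^j(y^{j+1}-y^{j+2})=3V^j$, have to be assembled into the single $\vgs(s)$ with coefficient $-6(1-3y^2)$ (resp.\ $+6(1-3y^1)$).

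Finally, the ``fibre $\to$ base'' components $J^{c_3}_{~\gt_i}$ and $J^{s}_{~\gt_i}$ need not be recomputed. Because $c_3,s$ are functions of $y$ only, $J$ sends base directions to $\partial_{\gt}$-directions and vice versa, so in the $(c_3,s,\gt_1,\gt_2)$ frame it is block-antidiagonal, $J=\left(\begin{smallmatrix}0&B\\ C&0\end{smallmatrix}\right)$, and $J^2=-1$ forces $B=-C^{-1}=-\tfrac{1}{\det C}\,\mathrm{adj}(C)$. For the $2\times2$ block $C$ (rows $\gt_1,\gt_2$, columns $c_3,s$) the adjugate is exactly the ``swap diagonal, flip off-diagonal sign'' pattern of the statement, and the universal prefactor is $1/\det C=\det B=\det(G_{ij})^{-1}\det M=4c_3\cdot\tfrac{\pi}{\go_1}=\tfrac{4\pi c_3}{\go_1}$, using $\det(G_{ij})=\tfrac{1}{4c_3}$ from \eqref{induced_metric} and $\det M=\tfrac{\pi}{\go_1}$ above. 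Matching signs reproduces all four relations $J^{c_3}_{~\gt_1}=\tfrac{4\pi c_3}{\go_1}(-J^{\gt_2}_{~s})$, $J^{c_3}_{~\gt_2}=\tfrac{4\pi c_3}{\go_1}J^{\gt_1}_{~s}$, $J^{s}_{~\gt_1}=\tfrac{4\pi c_3}{\go_1}J^{\gt_2}_{~c_3}$, $J^{s}_{~\gt_2}=\tfrac{4\pi c_3}{\go_1}(-J^{\gt_1}_{~c_3})$. I expect the main obstacle to be the $\vgs$-reduction in $J^{\gt_i}_{~c_3}$: the three contributions $\partial_{c_3}y^j$ involve $\vgs$ at the distinct third-period-shifted points $s,\,s\pm\tfrac{2\pi}{3}$, and collapsing them to $\vgs(s)$ requires the addition/third-period identity for $\zeta$ (equivalently for $\vgs$) from the appendix together with the constraint $\sum_i\partial_{c_3}y^i=0$; everything else is bookkeeping with $2\times2$ matrices and the symmetric functions $c_2,c_3$.
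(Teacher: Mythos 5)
Your proposal is correct and follows essentially the same route as the paper: transform the base bivector by the Jacobian $\partial(c_3,s)/\partial(y^1,y^2)=\pi/\go_1$, compute the $\bra d\gt_i,J\partial_s\ket$ and $\bra d\gt_i,J\partial_{c_3}\ket$ block directly from $G_{ij}$ together with lem.\ref{lem_partial_c_3} (with the $\vgs$ shift relations of def.\ref{def_vgs} collapsing the three shifted arguments to $\vgs(s)$), and recover the remaining block from $J^2=-1$. The only cosmetic difference is that you obtain the Jacobian conceptually from $V(c_3)=0$, $V(s)=\pi/\go_1$ rather than from the explicit $\partial_{c_3}y^i$ formulas, which is a slightly cleaner derivation of the same fact.
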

\begin{proof}
  The first statement is trivial, since $h=-1/4\log c_3$ is the hamiltonian that generates the flow $\partial_t$. Nonetheless, we can calculate it explicitly. The angular part of $Q$ is not affected, while the radial part is
  \bea 4c_3\partial_{y^1}\wedge\partial_{y^2}=4c_3\frac{\partial{(c_3,s)}}{\partial (y^1,y^2)}\partial_{c_3}\wedge \partial_s\nn\eea
  and the Jacobian can be computed using the explicit expressions $\partial_{c_3}y^i(s)$ given above. It turns out that the Jacobian equals
  \bea \frac{\partial{(c_3,s)}}{\partial (y^1,y^2)}=\frac{\pi}{\go_1}\nn\eea
  which reflects the fact that when $\go_1=0$ (which happens when $c_3=1/27$, the contour in fig.\ref{fig_contour_CP2} is the centre point) the $s$-coordinate is not valid.

  For the complex structure, since we have
  \bea J=G_{ij}\partial_{\gt_i}\otimes dy^j-G^{ij}\partial_{y^i}\otimes d\gt_j.\nn\eea
  We only need to focus on the first term, since the second is the inverse of the first.
  Evaluating the derivatives we get
  \bea \bra d\gt_i,J\partial_s\ket=\frac12\left[
       \begin{array}{cc}
         \frac{1-y^2}{y^1y^3} & \frac{1}{y^3} \\
         \frac{1}{y^3} & \frac{1-y^1}{y^2y^3} \\
       \end{array}\right]\left[
                           \begin{array}{c}
                             y^1(y^2-y^3) \\
                             y^2(y^3-y^1) \\
                           \end{array}\right]\frac{\go_1}{\pi}=\frac{\go_1}{2\pi}\left[
       \begin{array}{cc}
         3y^2-1 \\
         1-3y^1 \\
       \end{array}\right],\nn\eea
       while
       \bea \bra d\gt_i,J\partial_{c_3}\ket=\frac12\left[
       \begin{array}{cc}
         \frac{1-y^2}{y^1y^3} & \frac{1}{y^3} \\
         \frac{1}{y^3} & \frac{1-y^1}{y^2y^3} \\
       \end{array}\right]\left[
                           \begin{array}{c}
                             \partial_{c_3}y^1 \\
                             \partial_{c_3}y^2 \\
                           \end{array}\right]
                           =\frac{1}{4c_3(1-27c_3)}\left[
                           \begin{array}{c}
                             1+3y^2- 18 y^1y^2 -6\vgs(1-3y^2)  \\
                             1+3y^1- 18 y^1y^2 +6\vgs(1-3y^1)  \\
                           \end{array}\right].\nn\eea
The components $\bra ds,J\partial_{\gt_i}\ket$ and $\bra dc_3,J\partial_{\gt_i}\ket$ are fixed by demanding $J^2=-1$, giving
\bea
\bra dc_3,J\partial_{\gt_i}\ket=-2c_3\left[
                                                 \begin{array}{c}
                                                   1-3y^1 \\
                                                   1-3y^2 \\
                                                 \end{array}\right]^T,~~~
  \bra ds,J\partial_{\gt_i}\ket=-\frac{\pi}{(1-27c_3)\go_1}\left[
                                                 \begin{array}{c}
                                                   -1 - 3y^1 + 18y^1y^2 - 6\vgs(s)(1 - 3 y^1) \\
                                                   1+3y^2- 18y^1y^2 - 6\vgs(s)(1-3y^2) \\
                                                 \end{array}\right]^T.\nn\eea
\end{proof}
\begin{remark}\label{rmk_important}
With the explicit Poisson vector field $Q$ given above, the flow $\phi_t$ is simply
\bea \phi_t:\,(c_3,s)\to (c_3,s+\frac{t\pi}{\go_1}).\nn\eea
It is also crucial to remember that the flow time is measured in $t$, not in $s$. Even though $s$ and $t$ differ only by a factor of $\go_1$, the half period depends on $c_3$ in a quite complicated way \eqref{used_VII}. As a concrete instance where this can cause confusion we state the next corollary.
\end{remark}
\begin{corollary}\label{cor_two_I}
{\it  The two complex structures of the generalised K\"ahler structure of $\BB{C}P^2$ are explicitly given by $I_-(c_3,s)=J(c_3,s)$ and
  \bea
  && (I_+)^{\gt_i}_{~\,s}=J^{\gt_i}_{~s}(c_3,s+\frac{\pi\Gd t}{\go_1}),~~(I_+)^{\gt_i}_{~\,c_3}=J^{\gt_i}_{~c_3}(c_3,s+\frac{\pi\Gd t}{\go_1})+J^{\gt_i}_{~s}(c_3,s+\frac{\pi\Gd t}{\go_1})\frac{3\tilde\eta_1\pi\Gd t}{\go_1^2c_3(1-27c_3)}\nn\\
  && (I_+)^s_{~\gt_i}=J^s_{~\gt_i}(c_3,s+\frac{\pi\Gd t}{\go_1})-J^{c_3}_{~\gt_i}(c_3,s+\frac{\pi\Gd t}{\go_1})\frac{3\tilde\eta_1\pi\Gd t}{\go_1^2c_3(1-27c_3)},~~(I_+)^{c_3}_{~\gt_i}=J^{c_3}_{~\gt_i}(c_3,s+\frac{\pi\Gd t}{\go_1})\nn\eea
 where $\Gd t$ is the flow time and $J(c_3,s)$ is given in the proposition above.
  The terms proportional to $\Gd t$ above comes from the $c_3$-derivative of $s+\pi\Gd t/\go_1$.}
\end{corollary}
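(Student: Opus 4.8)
The strategy is to take the definition $I_+=(\phi_{\Delta t*})^{-1}I_-\phi_{\Delta t*}$ from \eqref{I_t} at face value and simply compute the conjugation in the $(c_3,s,\gt_1,\gt_2)$ chart, using the explicit form of the flow recorded in Remark \ref{rmk_important}. Since $I_-=J$ is already given in Proposition \ref{prop_main}, all that remains is to compute the pushforward $\phi_{\Delta t*}$ together with its inverse, and then carry out the matrix conjugation block by block.

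First I would write the flow as a map of the base coordinates,
\[
\phi_{\Delta t}:(c_3,s,\gt_1,\gt_2)\longmapsto\Big(c_3,\ s+\tfrac{\pi\Delta t}{\go_1(c_3)},\ \gt_1,\gt_2\Big),
\]
keeping in mind, as stressed in Remark \ref{rmk_important}, that the half period $\go_1$ is itself a nontrivial function of $c_3$. The Jacobian of $\phi_{\Delta t}$ is therefore the identity except for the single entry $\partial(s+\pi\Delta t/\go_1)/\partial c_3=-\pi\Delta t\,\go_1'/\go_1^2$ produced by differentiating $\go_1(c_3)$, where $\go_1'=d\go_1/dc_3$. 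This is the one and only place where the distinction between the flow time $t$ and the angular variable $s$ enters, and it is the source of every term proportional to $\Delta t$ in the statement.

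The main computational step is then the conjugation $(I_+)^a_{~b}\big|_p=[(\phi_{\Delta t*})^{-1}]^a_{~c}\,(I_-)^c_{~d}\big|_{\phi(p)}\,(\phi_{\Delta t*})^d_{~b}$. Because the Jacobian is lower triangular in the ordering $(c_3,s,\gt_i)$ --- acting trivially on the angular block and feeding only $c_3$ into $s$ --- the components separate cleanly: the $(\gt_i,s)$ and $(c_3,\gt_i)$ entries are unchanged apart from being evaluated at the shifted argument $s+\pi\Delta t/\go_1$, whereas the $(\gt_i,c_3)$ and $(s,\gt_i)$ entries each acquire one correction from the off-diagonal Jacobian element (with opposite signs coming from the $\phi_{\Delta t*}$ and $(\phi_{\Delta t*})^{-1}$ factors respectively). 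Reading off these four blocks reproduces the four lines of the corollary.

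The final ingredient is to identify the off-diagonal coefficient with the quantity in the statement; this is the only external input. Using the expression for $\go_1$ in \eqref{used_VII} together with the definition of $\tilde\eta_1$ in \eqref{eta_tilde}, one has $\go_1'=-3\tilde\eta_1/(c_3(1-27c_3))$, so that $-\pi\Delta t\,\go_1'/\go_1^2=3\tilde\eta_1\pi\Delta t/(\go_1^2 c_3(1-27c_3))$, which is exactly the factor multiplying $J^{\gt_i}_{~s}$ in $(I_+)^{\gt_i}_{~c_3}$ and (with the opposite sign) $J^{c_3}_{~\gt_i}$ in $(I_+)^s_{~\gt_i}$. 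The only genuine subtlety, and indeed the point the corollary is meant to highlight, is this chain-rule contribution through $\go_1(c_3)$; once it is accounted for, the rest is bookkeeping.
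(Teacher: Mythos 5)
Your proposal is correct and is precisely the argument the paper intends: conjugate $J$ by the Jacobian of $\phi_{\Gd t}:(c_3,s)\mapsto(c_3,s+\pi\Gd t/\go_1(c_3))$, whose only off-diagonal entry comes from the chain rule through $\go_1(c_3)$, and then substitute $\partial_{c_3}\go_1=-3\tilde\eta_1/(c_3(1-27c_3))$ from \eqref{ode_periods}. The paper leaves exactly this computation implicit (flagging only that ``the terms proportional to $\Gd t$ come from the $c_3$-derivative of $s+\pi\Gd t/\go_1$''), so you have simply filled in the intended details, with the correct signs and coefficient.
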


Next let us investigate the 2-form $F$ more closely.
\begin{proposition}\label{prop_F}
  For fixed flow time $\Gd t$, the 2-form $F$ in \eqref{GKS_I} reads
  \bea F=-(I^*_+d\gt_1-I^*_-d\gt_1)\wedge d\gt_2+[1\leftrightarrow2]\nn\eea
  with $I_{\pm}$ given in the last corollary.
\end{proposition}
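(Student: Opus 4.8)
The plan is to obtain $F$ from the constructive formula \eqref{F}, namely $F=d\int_0^{\Gd t}dt\,\phi_t^*(I_-^*dh)$, and then to recognise the result as the stated wedge combination of the one-forms $I_\pm^*d\gt_i$. First I would compute $I_-^*dh$ explicitly. Since $h=-\tfrac14\log c_3$ depends only on $c_3$, one has $dh=-\tfrac1{4c_3}dc_3$, and because $I_-=J$ interchanges the radial block $\{\partial_{c_3},\partial_s\}$ with the angular block $\{\partial_{\gt_1},\partial_{\gt_2}\}$, the one-form $I_-^*dc_3=dc_3\circ J$ has only angular components $J^{c_3}_{~\gt_i}$. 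Reading these off from prop.\ref{prop_main} gives $I_-^*dh=\tfrac12\sum_{i=1,2}(1-3y^i)\,d\gt_i$, a purely angular-valued one-form.

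Next I would feed this through the flow. By remark \ref{rmk_important} the flow acts as $\phi_t:(c_3,s)\mapsto(c_3,s+t\pi/\go_1)$ and fixes the angles, so $\phi_t^*d\gt_i=d\gt_i$ while the coefficient functions get evaluated at the shifted point. Hence $\phi_t^*(I_-^*dh)=\tfrac12\sum_i(1-3y^i_t)\,d\gt_i$ with $y^i_t=y^i(c_3,s+t\pi/\go_1)$, and integrating and applying $d$ yields
\[
F=\sum_{i=1,2}dA_i\wedge d\gt_i,\qquad A_i(c_3,s)=\tfrac12\int_0^{\Gd t}(1-3y^i_t)\,dt.
\]
In particular $F$ is \emph{off-diagonal}: it carries no $d\gt_1\wedge d\gt_2$ and no $dc_3\wedge ds$ part, which is exactly the structure asserted in the proposition. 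Because $F$ has this form it is recovered from its contractions $\iota_{\partial_{\gt_i}}F=-dA_i$; contracting the structural relation \eqref{GKS_I}, $I_+-I_-=-QF$, with the angular vector fields and using the explicit $Q$ of prop.\ref{prop_main} (whose angular block $\partial_{\gt_1}\wedge\partial_{\gt_2}$ gives $Q^{\#}d\gt_i=\pm\partial_{\gt_{3-i}}$) is what identifies each $dA_i$ with a difference $I_+^*d\gt_j-I_-^*d\gt_j$. Assembling the two angular directions then reproduces $F=-(I_+^*d\gt_1-I_-^*d\gt_1)\wedge d\gt_2+[1\leftrightarrow2]$, up to the bookkeeping of signs fixed by the orientation of that angular bivector.

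It remains to match components. Since both $I_\pm$ are off-diagonal, $I_\pm^*d\gt_i=(I_\pm)^{\gt_i}_{~s}\,ds+(I_\pm)^{\gt_i}_{~c_3}\,dc_3$, so I would match the $ds$- and $dc_3$-parts separately. The $ds$-part is immediate: differentiating $A_i$ under the integral and rewriting the $s$-derivative of $y^i(c_3,s+t\pi/\go_1)$ as $\tfrac{\go_1}{\pi}$ times its total $t$-derivative collapses the integral, giving $\partial_s A_i=-\tfrac{3\go_1}{2\pi}(y^i_{\Gd t}-y^i_0)$, which is precisely the $ds$-component of $(I_+)^{\gt_j}_{~s}-J^{\gt_j}_{~s}$ produced by the pure $s$-shift in cor.\ref{cor_two_I}.

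The hard part will be the $dc_3$-component. Differentiating $A_i$ in $c_3$ is subtle because the upper limit and, more importantly, the shifted argument $s+t\pi/\go_1$ both carry $c_3$-dependence through the half-period $\go_1(c_3)$. Tracking the latter produces a term proportional to $\partial_{c_3}\go_1$, and this is exactly the mechanism that generates the correction $\tfrac{3\tilde\eta_1\pi\Gd t}{\go_1^2c_3(1-27c_3)}$ multiplying the $s$-components in the $(I_+)^{\gt_i}_{~c_3}$ entries of cor.\ref{cor_two_I}. Showing that $\partial_{c_3}A_i$ reproduces precisely this $\tilde\eta_1$ term is the crux; it relies on the explicit $c_3$-derivatives $\partial_{c_3}y^i$ (lem.\ref{lem_partial_c_3}) and the definition of $\tilde\eta_1$ in \eqref{eta_tilde}. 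Finally, the identity is established on the open dense locus where $Q$ is nondegenerate (away from $c_3=0$, i.e. the anti-canonical divisor) so that $F=-Q^{-1}(I_+-I_-)$ is well defined and unique, and it extends to all of $\BB{C}P^2$ by continuity since every quantity involved is smooth there.
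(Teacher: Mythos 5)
Your overall strategy coincides with the paper's: start from \eqref{F}, use $I_-^*dh=\tfrac12\sum_i(1-3y^i)d\gt_i$ and the fact that the flow fixes the angles to reduce $F$ to $\sum_i dA_i\wedge d\gt_i$ with $A_i$ a $t$-integral of $y^i_t$, and then match the $ds$- and $dc_3$-components of $dA_i$ against cor.\ref{cor_two_I}. Your $ds$-component computation is correct and identical to the paper's. However, there is a genuine gap at exactly the point you label ``the crux'': the $dc_3$-component. Differentiating under the integral sign with lem.\ref{lem_partial_c_3} plus the chain-rule term from $\partial_{c_3}(t\pi/\go_1)$ does produce the boundary contribution $3y^i_{\Gd t}\,\tilde\eta_1\Gd t/(\go_1c_3(1-27c_3))$, but it also leaves behind non-elementary $t$-integrals --- $\int_0^{\Gd t}y^i_t\,dt$ (from $\partial_{c_3}\go_1$ and from integrating the boundary term by parts) and $\int_0^{\Gd t}\dot y^i_t\,\vgs\,dt$ (from the last term of lem.\ref{lem_partial_c_3}) --- and your proposal supplies no mechanism for evaluating these or showing that they cancel against one another. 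The paper's proof avoids this entirely by performing the $t$-integral \emph{first} in closed form: lem.\ref{lem_inv} writes $y^i$ as differences of $\zeta$-functions, so $Y^i=\int_0^{\Gd t}\phi_t^*y^i\,dt$ becomes a logarithm of ratios of Weierstrass $\gs$-functions, eq.\ \eqref{Y12}, and only then is $\partial_{c_3}$ applied using the explicit $\partial_{c_3}\log\gs$ formula. That closed-form integration is the essential technical ingredient missing from your plan.

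A secondary remark: your middle paragraph identifies $dA_i$ with $I_+^*d\gt_j-I_-^*d\gt_j$ by contracting the relation \eqref{GKS_I} with the angular vector fields. Since the stated purpose of this proposition is to \emph{verify} \eqref{GKS_I} explicitly on $\BB{C}P^2$, invoking \eqref{GKS_I} to establish the identification is circular as a proof step (it is fine as motivation). The burden of proof lies entirely in the direct component matching that follows, which is where the gap above sits.
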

This result checks \eqref{GKS_I} explicitly.
\begin{proof}
Fixing $\Gd t$ we have from \eqref{F}
\bea F=d\int_0^{\Gd t}dt \phi^*_t(I^*dh)=\frac{1}{2}d\int_0^{\Gd t}dt\phi_t^*((1-3y^1)d\gt_1+(1-3y^2)d\gt_2)=-\frac{3}{2}d\int_0^{\Gd t}dt\phi_t^*(y^i\wedge d\gt_i).\nn\eea
where the factor 3 is expected since $dd^ch$ is proportional to the curvature of the canonical class which is $3$ times the K\"ahler class. The integral
\bea Y^i=\int_0^{\Gd t}dt \phi_t^*y^i\nn\eea
can be worked out using lem.\ref{lem_inv} and the fact that the integral of $\zeta$ is $\log\gs$, see \eqref{sigma_zeta}.
\bea Y^1=\log\frac{\gs(\go_2+\frac{\go_1s}{\pi}+\Gd t)\gs(\go_2+\frac{\go_1s}{\pi}-\tf)}{\gs(\go_2+\frac{\go_1s}{\pi}+\Gd t-\tf)\gs(\go_2+\frac{\go_1s}{\pi})}+\Gd t(\frac{1}{2}-\zeta(\tf)),\nn\\
Y^2=\log\frac{\gs(\go_2+\frac{\go_1s}{\pi}+\Gd t+\tf)\gs(\go_2+\frac{\go_1s}{\pi})}{\gs(\go_2+\frac{\go_1s}{\pi}+\Gd t)\gs(\go_2+\frac{\go_1s}{\pi}+\tf)}+\Gd t(\frac{1}{2}-\zeta(\tf))\label{Y12}\eea
And so $F$ is simply
\bea F=-\frac{3}{2}\sum_{i=1}^2dY^i\wedge d\gt_i.\nn\eea
It is straightforward to compute $dY^i$ i.e. $\partial_{c_3}Y^i,\,\partial_sY^i$, though rather tedious. So we record only one intermediate step
\bea &&\partial_{c_3}\Big(\log\frac{\gs(\go_2+\frac{\go_1s}{\pi})}{\gs(\go_2+\frac{\go_1s}{\pi}-\tf)}+\frac{s\go_1}{\pi}(\frac{1}{2}-\zeta(\tf))\Big)\nn\\
&&~~~~=\frac{1}{c_3(1-27c_3)}\bigg\{\frac12(y^2-y^3)-\frac{s}{\pi}\tilde\eta_1+(3y^1-1)\vgs(s+2\pi/3)\bigg\}=-\frac{2}{3}J^{\gt_2}_{~\;c_3}(s)-\frac{s\tilde\eta_1}{\pi c_3(1-27c_3)},\nn\\
&&\partial_{c_3}\Big(\log\frac{\gs(\go_2+\frac{\go_1s}{\pi}+\tf)}{\gs(\go_2+\frac{\go_1s}{\pi})}+\frac{s\go_1}{\pi}(\frac{1}{2}-\zeta(\tf))\Big)\nn\\
&&~~~~=\frac{1}{c_3(1-27c_3)}\bigg\{\frac12(y^3-y^1)-\frac{s}{\pi}\tilde\eta_1+(3y^2-1)\vgs(s-2\pi/3)\bigg\}
=\frac{2}{3}J^{\gt_1}_{~\;c_3}(s)-\frac{s\tilde\eta_1}{\pi c_3(1-27c_3)},\nn\eea
where $\tilde \eta_i=\eta_i+\go_i(1/12-3c_3)$ and we have \emph{discarded} any $s$-\emph{independent} term.

The reason we can disregard such terms is that to get $\partial_{c_3}Y^i$ we need only take the difference of the above expression at $s+\pi\Gd t/\go_1$ and at $s$, so the $s$-independent terms drop. However we must not forget the $c_3$-dependence in the shift $\pi\Gd t/\go_1$, which produces $3y^i\Gd t\tilde\eta_1/(c_3(1-27c_3)\go_1)$. This is another instance where the remark \ref{rmk_important} is important.
\bea \partial_{c_3}Y^1&=&-\frac{2}{3}J^{\gt_2}_{~\;c_3}(s+\pi\Gd t/\go_1)+\frac{2}{3}J^{\gt_2}_{~\;c_3}(s)+(3y^1-1)\frac{\tilde\eta_1 \Gd t}{\go_1 c_3(1-27c_3)}\nn\\
&=&-\frac{2}{3}J^{\gt_2}_{~\;c_3}(s+\pi\Gd t/\go_1)+\frac{2}{3}J^{\gt_2}_{~\;c_3}(s)-J^{\gt_2}_{~\,s}\frac{2\pi\tilde\eta_1 \Gd t}{\go^2_1 c_3(1-27c_3)}
=-\frac23(I_+-I_-)^{\gt_2}_{~\;c_3}\nn\\
\partial_{c_3}Y^2&=&\frac{2}{3}J^{\gt_1}_{~\;c_3}(s+\pi\Gd t/\go_1)-\frac{2}{3}J^{\gt_1}_{~\;c_3}(s)+(3y^2-1)\frac{\tilde\eta_1 \Gd t}{\go_1 c_3(1-27c_3)}\nn\\
&=&\frac{2}{3}J^{\gt_1}_{~\;c_3}(s+\pi\Gd t/\go_1)-\frac{2}{3}J^{\gt_1}_{~\;c_3}(s)+J^{\gt_1}_{~\,s}\frac{2\pi\tilde\eta_1 \Gd t}{\go_1^2 c_3(1-27c_3)}=\frac23(I_+-I_-)^{\gt_1}_{~\;c_3},\nn\eea
where we have used cor.\ref{cor_two_I}.
The $s$-derivatives are much easier
\bea \partial_sY^1&=&\frac{\go_1}{\pi}(y^1(s+\pi\Gd t/\go_1)-y^1(s))=-\frac23(I_+-I_-)^{\gt_2}_{~\;s}\nn\\
\partial_sY^2&=&\frac{\go_1}{\pi}(y^2(s+\pi\Gd t/\go_1)-y^2(s))=\frac23(I_+-I_-)^{\gt_1}_{~\;s}\nn.\eea
Now the assembling is straightforward
\bea -F=(-I_++I_-)^{\gt_2}_{~\;s}ds\wedge d\gt_1-(I_+-I_-)^{\gt_2}_{~\;c_3}dc_3\wedge d\gt_1
+(I_+-I_-)^{\gt_1}_{~\;s}ds\wedge d\gt_2+(I_+-I_-)^{\gt_1}_{~\;c_3}dc_3\wedge d\gt_2\nn\eea
\end{proof}

\appendix
\section{Some facts of the elliptic functions}
Our primary source of reference is the book \cite{du_val_1973}.
\subsection{The Weierstrass $\wp$ function}\label{sec_TWwpf}
Given a lattice $\Go=\opn{span}_{\BB{Z}}\bra\go_1,\go_2\ket$, the function $\wp(z)$ is double periodic with period $2\Go$. It can be expressed as a sum
\bea \wp(u)=\wp(u|2\Go)=u^{-2}+\sum_{\go\in \Go}{'}(\frac{1}{(u-2\go)^2}-\frac{1}{(2\go)^2})\label{wp_sum}\eea
where $\Gs'$ means excluding the origin in the sum. The series is absolutely and uniformly convergent, once this is secured the double periodicity is immediate since one can shift the summation.

The function $\wp$ satisfies the well-known differential equation
\bea (\wp')^2=4\wp^3-g_2 \wp-g_3.\nn\eea
This function is even with a double pole with zero residue at $u=0+2\Go$. This fact plus Liouville's theorem show that the equation $\wp(u)=c$ has two roots (in one fundamental region). In particular, the three half periods $\go_1,\go_2,\go_3=\go_1+\go_3$ are the only points mod $2\Go$ where $\wp'=0$, if we denote
\bea &e_i=\wp(\go_i),\label{half_period_value}\\
&c_1(e_i)=e_1+e_2+e_3=0,~~c_2(e_i)=e_1e_2+e_2e_3+e_3e_1=-g_2/4,~~c_3(e_i)=e_1e_2e_3=g_3/4\nn\eea
then the equation $\wp(u)-e_i=0$ has a double root at $u=\go_i$.

In the following we assume
\bea g_2=\frac{1}{12}-2c_3,~~~g_3=\frac{c_3}{6}-\frac{1}{216}-c_3^2,~~c_3\in [0,1/27].\label{g23_ours}\eea
The discriminant of the cubic $4x^3-g_2x-g_3$ is
\bea \Gd=g_2^3-27g_3^2=c_3^3(1-27c_3).\nn\eea
One can compute the modular invariant
\bea j(\tau)=1728\frac{g_2^3}{\Gd}=\frac{(-1 + 24c_3)^3}{c_3^3(-1 + 27c_3)}\in[1728,\infty)\label{j_invariant}\eea
with the lowest value assumed at $c_3=(1/12 - \sqrt3/36)$.

If the $j$-invariant is between $(1,\infty)$, then the modular parameter $\tau$ corresponds to a square lattice (see fig.\ref{fig_fund_region}), which turns degenerate when $j(\tau)\to+\infty$.
For us this happens at $c_3=0$ or $c_3=1/27$, i.e. either $y^i=0$ (at the $i^{th}$ face of the polygon) or $y^{1,2,3}=1/3$ at the very centre.
\begin{figure}[h]
\begin{center}
\begin{tikzpicture}
  \draw[->] (-1.4,0) -- (1.4,0) node[right]{\scriptsize{$x$}};
  \draw[blue] (0.5,0.866) arc (60:120:1);
  \draw[black,dotted] (0.5,0.866) arc (60:0:1);
  \draw[black,dotted] (-0.5,0.866) arc (120:180:1);

  \draw[black,dotted] (.5,0.866) -- (.5,0) node[below]{\scriptsize{$\frac12$}};
  \draw[black,dotted] (-.5,0.866) -- (-.5,0) node[below]{\scriptsize{$-\frac12$}};

  \draw[-,blue] (0.5,0.866) -- (0.5,1.8);
  \draw[-,blue] (-0.5,0.866) -- (-0.5,1.8);

  \draw[-,red,line width=.4mm] (0,1) -- (0,2);
  \draw[->,black,dotted] (0,0) -- (0,2) node[right]{\scriptsize{$y$}};

  \node at (0.1,.8) {\scriptsize{$i$}};
\end{tikzpicture}
\caption{The fundamental region of $SL(2,\BB{Z})$ action on the upper plane. Our lattice is on the red line.}\label{fig_fund_region}
\end{center}
\end{figure}
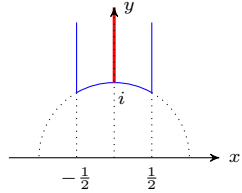

We assume from now on a square lattice
$\go_1\in\BB{R}_{>0}$ and $\go_2\in i\BB{R}_{>0}$. In such case the real locus of $\wp$ is along the lines $\re z=0,\go_1$ or $\im z=0,-i\go_2$ mod $2\Go$, which is easy to see from the summation \eqref{wp_sum}.
\begin{lemma}(sec.19 \cite{du_val_1973})
  Along the segment $[0,\go_1]$ along the real axis, the function $\wp$ goes from $+\infty$ to $e_1$ monotonically, while along the segment $[0,\go_2]$ along the imaginary axis, the function $\wp$ goes from $-\infty$ to $e_2$ monotonically. Further
  \bea e_1>e_3>e_2.\nn\eea
\end{lemma}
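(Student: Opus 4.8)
The plan is to exploit the symmetry of the square lattice under complex conjugation together with the precise location of the zeros of $\wp'$. First I would record that, because $\go_1\in\BB{R}_{>0}$ and $\go_2\in i\BB{R}_{>0}$, the period lattice $2\Go$ is invariant under $z\mapsto\bar z$. Applying this to the series \eqref{wp_sum} gives $\overline{\wp(u)}=\wp(\bar u)$, so $\wp$ is \emph{real} on both the real and the imaginary axes. Moreover the only pole of $\wp$ inside the closed rectangle with vertices $0,\go_1,\go_3,\go_2$ sits at the origin, where $\wp(u)=u^{-2}+O(u^2)$; hence $\wp\to+\infty$ as $u\to0$ along the real axis and $\wp\to-\infty$ as $u\to0$ along the imaginary axis.

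For the monotonicity I would use that $\wp'$ is an elliptic function of degree three (a triple pole at the origin) whose three zeros mod $2\Go$ are exactly the half-periods $\go_1,\go_2,\go_3$: since $\wp'$ is odd and $2\Go$-periodic, $\wp'(\go_i)=\wp'(-\go_i)=-\wp'(\go_i)$, forcing $\wp'(\go_i)=0$. Consequently $\wp'$ does not vanish on the open segment $(0,\go_1)$, and since $\wp$ is real and smooth there, it is strictly monotone; the boundary value $\wp(\go_1)=e_1$ together with $\wp(0^+)=+\infty$ forces it to decrease from $+\infty$ to $e_1$, so $\wp\in(e_1,\infty)$ on this segment. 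The same argument on the imaginary segment, where $\tfrac{d}{dv}\wp(iv)=i\wp'(iv)$ is real and nonvanishing for $0<v<|\go_2|$, shows $\wp$ runs monotonically from $-\infty$ to $e_2$, so $\wp\in(-\infty,e_2)$ there. This establishes the two monotonicity claims.

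Finally the ordering follows from the differential equation $(\wp')^2=4(\wp-e_1)(\wp-e_2)(\wp-e_3)$. On the real segment $\wp'$ is real, so $(\wp-e_1)(\wp-e_2)(\wp-e_3)\ge0$ for all $\wp\in(e_1,\infty)$; since $\wp-e_1>0$ this means $(\wp-e_2)(\wp-e_3)\ge0$ throughout $(e_1,\infty)$, which is possible only if both $e_2,e_3\le e_1$, i.e.\ $e_1$ is the largest root. On the imaginary segment $\wp'$ is purely imaginary, so $(\wp-e_1)(\wp-e_2)(\wp-e_3)\le0$ for all $\wp\in(-\infty,e_2)$; since $\wp-e_2<0$ this forces $(\wp-e_1)(\wp-e_3)\ge0$ on $(-\infty,e_2)$, which requires $e_2$ to be the smallest root. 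As the discriminant $\Gd=c_3^3(1-27c_3)$ is strictly positive for $c_3\in(0,1/27)$, the three roots are distinct, whence $e_1>e_3>e_2$. The one point demanding genuine care — and the main obstacle — is pinning down the sign of $(\wp')^2$ on each segment: one must verify that $\wp'$ is real along the real axis but purely imaginary along the imaginary axis, which I would obtain by differentiating the already-established reality of $\wp$ along each axis.
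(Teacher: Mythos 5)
Your proof is correct, and the monotonicity part coincides with the paper's: both arguments rest on the pole $u^{-2}$ at the origin fixing the limits $\pm\infty$ on the two axes, and on the fact that the only zeros of $\wp'$ mod $2\Go$ are the half-periods, so $\wp$ cannot turn around on the open segments. Where you genuinely diverge is in establishing $e_1>e_3>e_2$. The paper argues via the root-sum theorem for elliptic functions (thm 1.5 of du Val): if, say, $e_2>e_1$, some value $c$ is attained once on $(0,\go_1)$ and once on $(0,\go_2)$, these being the only two roots of the order-$2$ function $\wp-c$, and their sum cannot be $\equiv 0\bmod 2\Go$ since one is real and the other imaginary; a second application along the path $\go_1\to\go_3\to\go_2$ places $e_3$ in the middle. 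You instead read the ordering off the sign of $(\wp')^2=4(\wp-e_1)(\wp-e_2)(\wp-e_3)$, which is $\geq 0$ on the real locus where $\wp$ sweeps $(e_1,\infty)$ and $\leq 0$ on the imaginary locus where it sweeps $(-\infty,e_2)$, forcing $e_1$ to be the largest root and $e_2$ the smallest once the roots are known to be distinct. Your route is more elementary and self-contained (no appeal to the Abel-type relation, and you make explicit the reality of $\wp$ and the pure imaginarity of $\wp'$ on the imaginary axis, which the paper leaves implicit), at the mild cost of invoking the positivity of the discriminant $\Gd=c_3^3(1-27c_3)$ to rule out repeated roots — a hypothesis that is anyway needed for the strict inequalities and is automatic here for $c_3\in(0,1/27)$. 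Both proofs are sound.
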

\begin{proof}
  As $\wp'=0$ only at $\go_1,\go_2,\go_3=\go_1+\go_2$ mod $2\Go$, the monotonicity will be clear once we figure out the relative sizes of $e_{1,2,3}$.

  From the pole $u^{-2}$ in \eqref{wp_sum}, one knows that going from $0$ to $\go_1$ along the $x$-axis $\wp$ decreases from $+\infty$ to $e_1$. Similarly going from 0 to $\go_2$ along the $y$-axis $\wp$ increases from $-\infty$ to $e_2$.

  Suppose $e_2>e_1$, then there is a value $c\in[e_1,e_2]$ where $\wp(u)=c$ has one root $u_1\in[0,\go_1]$ and another $u_2\in[0,\go_2]$. These are the only solutions.
  But $u_1+u_2$ should be $0\mod(2\Go)$ by thm 1.5. \cite{du_val_1973}, but this is impossible since $u_1$ is real and $u_2$ is imaginary. Thus $e_1>e_2$.

  By considering a path from $\go_1$ to $\go_3$ then to $\go_2$, if, say, $e_3>e_1>e_2$, then again there is $c\in[e_1,e_3]$ and (only) two roots to $\wp=c$ one in $[\go_1,\go_3]$, and another in $[\go_2,\go_3]$. This is again impossible using the same argument.
\end{proof}
We will need the value of $\wp$ at tertiary periods.
\begin{lemma}\label{lem_1/12}
  \bea \wp(\frac{2}{3}\go_1)=\frac1{12},~~~\wp'(\frac{2}{3}\go_1)=-c_3.\nn\eea
\end{lemma}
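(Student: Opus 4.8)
The plan is to first verify algebraically that $x=\tfrac1{12}$ is an admissible value of $\wp$ carrying exactly the claimed derivative, and then to pin down that it is attained precisely at the real third‑period point $\tf=\tfrac23\go_1$ by using reality together with the monotonicity already established.

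First I would substitute $\wp=\tfrac1{12}$ into the defining cubic $(\wp')^2=4\wp^3-g_2\wp-g_3$ with $g_2,g_3$ as in \eqref{g23_ours}. A short computation shows that the $c_3$‑independent terms and the terms linear in $c_3$ both cancel, leaving $(\wp')^2=c_3^2$. Hence, wherever $\wp$ attains the value $\tfrac1{12}$, its derivative there is $\pm c_3$; in particular (for $c_3\neq0$) this is not a branch value, so $\wp(z)=\tfrac1{12}$ has two simple solutions $\pm z_0$ in a fundamental domain.

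Next I would identify $z_0$ with $\tf$. Since $3\tf=2\go_1\in2\Go$, the point $\tf$ is a nontrivial $3$‑torsion point, so $2\tf\equiv-\tf$ and $\wp(2\tf)=\wp(\tf)$. Feeding this into the duplication formula $\wp(2z)=-2\wp(z)+\tfrac14(\wp''/\wp')^2$ and using $\wp''=6\wp^2-g_2/2$ with the cubic yields the $3$‑division relation $12\,\wp(\tf)^4-6g_2\,\wp(\tf)^2-12g_3\,\wp(\tf)-\tfrac14g_2^2=0$. I would then check by direct substitution that $x=\tfrac1{12}$ solves this quartic for our $g_2,g_3$, so $\tfrac1{12}$ is the $\wp$‑value at \emph{some} $3$‑torsion point. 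To see it is the value at $\tf$, I would use that on the rectangular lattice only the two real third‑period values $\wp(\tfrac23\go_1)$ and $\wp(\tfrac23\go_2)$ are real, the remaining pair $\wp(\tfrac23(\go_1\pm\go_2))$ lying off the real locus of $\wp$ and hence being a complex‑conjugate pair. By the monotonicity lemma $\wp(\tfrac23\go_1)>e_1$ while $\wp(\tfrac23\go_2)<e_2<e_1$, so there is a \emph{unique} real root of the quartic exceeding $e_1$, namely $\wp(\tf)$. It then suffices to show $\tfrac1{12}>e_1$: writing $p(x)=4x^3-g_2x-g_3$ one has $p(\tfrac1{12})=c_3^2>0$ and $p'(\tfrac1{12})=\tfrac1{12}-g_2=2c_3>0$, and since the local maximum of $p$ sits at $-\sqrt{g_2/12}<0<\tfrac1{12}$, the value $\tfrac1{12}$ lies on the rightmost increasing branch of $p$, i.e. beyond the largest root $e_1$. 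Being a real root exceeding $e_1$, it can only be $\wp(\tf)$, so $\wp(\tf)=\tfrac1{12}$.

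Finally the sign follows from monotonicity: on $(0,\go_1)$ the function $\wp$ is strictly decreasing, so $\wp'(\tf)<0$, and with $(\wp')^2=c_3^2$ and $c_3>0$ this forces $\wp'(\tf)=-c_3$. The main obstacle is the identification step of the third paragraph — matching the algebraic root $\tfrac1{12}$ to the geometric point $\tf$ — where the reality of the lattice and the ordering $e_1>e_3>e_2$ do the actual work; everything else is routine substitution into the cubic and the duplication formula.
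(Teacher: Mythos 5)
Your proof is correct and follows essentially the same route as the paper: both identify $\tfrac1{12}$ as a root of the quartic satisfied by the third-period values $\wp(\tfrac23\go_i)$ and then single out $i=1$ using the reality of the square lattice together with the monotonicity/ordering $e_1>e_3>e_2$. The differences are minor --- you derive the quartic from the duplication formula rather than citing du Val 15.1, you place $\tfrac1{12}$ above $e_1$ by evaluating the cubic and its derivative there instead of invoking $e_1+e_2+e_3=0$, and you spell out the derivative claim $\wp'(\tf)=-c_3$ (including the sign), which the paper's proof leaves implicit.
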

\begin{proof}
The four values
\bea \wp(\frac{2}{3}\go_i),~~i=1,2,3,4,~~\go_4=\go_1-\go_2\nn\eea
are the four roots of
\bea x^4-\frac12 g_2 x^2-g_3 x-\frac{1}{48}g_2^2=0.\label{quartic}\eea
For the proof see 15.1 \cite{du_val_1973}. This is true for arbitrary $g_2, g_3$, but if we plug in the concrete expression \eqref{g23_ours}, one gets
\bea x^4-\frac12 g_2x^2-g_3x-\frac{1}{48}g_2^2=(x-\frac{1}{12})\big((x+\frac{1}{36})^3+(c_3-\frac{1}{27})(x+\frac{1}{36})+c_3^2-\frac{c_3}{9}+\frac{2}{3^6}\big).\nn\eea
Thus one of $\wp(2/3\go_i)$ must be the root $1/12$. But $\wp(2/3\go_i)$ is not real for $i=3,4$, so we have either $\wp(2/3\go_1)=1/12$ with $1/12>e_1>e_3>e_2$, or $\wp(2/3\go_2)=1/12$ with $e_1>e_3>e_2>1/12$. But the last case would force $e_1+e_2+e_3>1/4$, but the sum should be zero. So we have $\wp(2/3\go_1)=1/12$. Note the case $e_1>1/12>e_2$ is impossible, since this would mean that $\wp=1/12$ must have a two roots $[\go_2,\go_2+2\go_1]$ or $[\go_1,\go_1+2\go_2]$, but $2/3\go_{1,2}$ are not in these segments.
\end{proof}
\begin{lemma}\label{lem_sixth}
Letting $\tf=2\go_1/3$, we have
\bea &&\wp(\tf/2)=\frac{1}{36c_3}(27c_3 + 6e_1 - 72c_3e_1 + 72e_1^2 - 1),\nn\\
&&\wp(\go_2+\tf)=\frac{1}{36c_3}(27c_3 + 6e_2 - 72c_3e_2 + 72e_2^2 - 1),\nn\\
&&\wp'(\tf/2)=-(3c_3+\wp(\tf/2)-\frac{1}{12}),\nn\\
&&\wp'(\go_2+\tf)=3c_3+\wp(\go_2+\tf)-\frac{1}{12}.\nn\eea
\end{lemma}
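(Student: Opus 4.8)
The plan is to read off all four values from a single computation---the classical Weierstrass addition formula applied at a half-period, where $\wp'$ vanishes---fed by the special values $\wp(\tf)=\tfrac1{12}$ and $\wp'(\tf)=-c_3$ from lem.\ref{lem_1/12}. The unifying observation is that both arguments can be written as $\go_a+\epsilon_a\tf$: indeed $\tf/2=\go_1-\tf$ gives $a=1,\ \epsilon_1=-1$, while $\go_2+\tf$ gives $a=2,\ \epsilon_2=+1$, so the two cases run in parallel.

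For the $\wp$-values (the first two identities) I would apply the addition formula
\bea \wp(u+v)=-\wp(u)-\wp(v)+\tfrac14\Big(\tfrac{\wp'(u)-\wp'(v)}{\wp(u)-\wp(v)}\Big)^2\nn\eea
with $u=\go_a$ (so $\wp(\go_a)=e_a$, $\wp'(\go_a)=0$) and $v=\epsilon_a\tf$ (so $\wp=\tfrac1{12}$, $\wp'=-\epsilon_a c_3$). Since $\epsilon_a^2=1$ this yields, uniformly,
\bea \wp(\go_a+\epsilon_a\tf)=-e_a-\tfrac1{12}+\frac{c_3^2}{4(e_a-\tfrac1{12})^2}.\nn\eea
To put this in the stated rational form I would eliminate $c_3^2$ through the factorisation $(\wp')^2=4(\wp-e_1)(\wp-e_2)(\wp-e_3)$ evaluated at $\tf$, i.e.\ $c_3^2=4(\tfrac1{12}-e_1)(\tfrac1{12}-e_2)(\tfrac1{12}-e_3)$, which collapses the fraction, and then use the Vieta relations $e_1+e_2+e_3=0$, $\sum_{i<j}e_ie_j=-g_2/4$ together with the cubic $4e_a^3=g_2e_a+g_3$ (valid because $\wp'(\go_a)=0$) and $g_2=\tfrac1{12}-2c_3$. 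The case $a=1$ then produces the claimed expression for $\wp(\tf/2)$ in terms of $e_1$, and $a=2$ the one for $\wp(\go_2+\tf)$ in terms of $e_2$, both with denominator $36c_3$.

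For the $\wp'$-values (the last two identities) I would differentiate the same addition formula in $u$ and evaluate at $u=\go_a$. Because $\wp'(\go_a)=0$, only the derivative of the squared bracket survives, and with $\wp''(\go_a)=6e_a^2-\tfrac{g_2}{2}$ one gets
\bea \wp'(\go_a+\epsilon_a\tf)=\frac{\epsilon_a\,c_3\,(6e_a^2-\tfrac{g_2}{2})}{2(e_a-\tfrac1{12})^2}.\nn\eea
Inserting the $\wp$-value found above and reducing once more with $c_3^2=4\prod_i(\tfrac1{12}-e_i)$ and the cubic $4e_a^3=g_2e_a+g_3$, this collapses to $\wp'(\go_a+\epsilon_a\tf)=\epsilon_a\big(3c_3+\wp(\go_a+\epsilon_a\tf)-\tfrac1{12}\big)$. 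For $a=1$ ($\epsilon_1=-1$) this is exactly $-(3c_3+\wp(\tf/2)-\tfrac1{12})$, and for $a=2$ ($\epsilon_2=+1$) exactly $3c_3+\wp(\go_2+\tf)-\tfrac1{12}$, which are the two stated derivative identities. The crucial point is that the correct overall sign is delivered automatically by the factor $\epsilon_a$, so no separate sign discussion is needed; as a cross-check one may note that $\tf/2=\go_1/3$ lies on the real segment $(0,\go_1)$ where $\wp$ is monotonically decreasing, forcing $\wp'(\tf/2)<0$, consistent with the first derivative identity.

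The only real obstacle is the algebraic bookkeeping in the two reduction steps: each collapse to the stated form requires eliminating $e_a^3$ via the cubic and $c_3^2$ via the product formula and then verifying a polynomial identity in $e_a,c_3$. I have checked that for $a=1$ the derivative identity closes cleanly---after the substitutions both sides equal $-12c_3e_1^2+\tfrac{c_3}{12}-2c_3^2$---and the remaining three cases are identical under $e_1\to e_2$ and the sign $\epsilon_1\to\epsilon_2$. No elliptic-function input beyond lem.\ref{lem_1/12}, the half-period relations $\wp(\go_a)=e_a$, $\wp'(\go_a)=0$, and the addition formula is required.
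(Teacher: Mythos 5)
Your proposal is correct. For the two $\wp$-values it is essentially the paper's argument: the addition formula applied at a half-period (where $\wp'$ vanishes), fed by $\wp(\tf)=\tfrac1{12}$, $\wp'(\tf)=-c_3$, followed by an algebraic reduction of $c_3^2/(4(e_a-\tfrac1{12})^2)$ using the cubic $4e_a^3=g_2e_a+g_3$; the paper packages that reduction as the inversion identity $\tfrac{1}{1/12-e_i}=\tfrac{4}{c_3^2}(e_i^2+e_i/12+c_3/2-1/72)$, which is the same elimination you perform via $c_3^2=4\prod_i(\tfrac1{12}-e_i)$ and Vieta. Where you genuinely diverge is the two derivative identities. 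You differentiate the addition formula in $u$ at $u=\go_a$, which isolates $\wp'(\go_a+\epsilon_a\tf)=\epsilon_a c_3\,\wp''(\go_a)/(2(e_a-\tfrac1{12})^2)$ with $\wp''(\go_a)=6e_a^2-g_2/2$, and then reduce algebraically once more; this is self-contained within elliptic-function identities but costs a second round of polynomial bookkeeping. The paper instead reads $\wp'$ off the dynamics: since $\wp=\tfrac1{12}-c_3/y^1$ along the flow, $\wp'=c_3(y^2-y^3)/y^1$ by \eqref{flow_eqn}, and at $t=-\tf/2$ (resp.\ $t=\go_2$) two of the $y^i$ coincide, so $y^2-y^3=\pm(1-3y^1)$ and the identity $\wp'=\mp(3c_3+\wp-\tfrac1{12})$ drops out in one line with no further algebra. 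Both routes are valid (your sign factor $\epsilon_a$ is delivered correctly, and the monotonicity cross-check is sound); the paper's is shorter for the derivatives, yours has the virtue of not invoking the flow interpretation at all.
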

\begin{proof}
  Apply the addition formula to $\tf+(-\go_1)$
\bea \wp(\tf/2)=\wp(\tf-\go_1)=\frac14\frac{(\wp'(\tf))^2}{(\wp(\tf)-e_1)^2}-\wp(\tf)-e_1.\nn\eea
As everything on the rhs is known, we get
\bea
\wp(\tf/2)=\frac14\frac{c_3^2}{(1/12-e_1)^2}-1/12-e_1.\label{exp_sol}\eea
To simplify \eqref{exp_sol}, we note that $g_{2,3}$ are real, the $e_i$'s are algebraic over $\BB{R}$. By some elementary field theory, for any polynomial $f(e_i)$, one can express $1/f(e_i)$ as a polynomial of $e_i$. For example
\bea \frac{1}{1/12-e_i}=\frac{4}{c_3^2}(e_i^2+e_i/12+c_3/2-1/72).\nn\eea
Applying this to \eqref{exp_sol} we get
\bea \wp(\tf/2)=\frac{1}{36c_3}(27c_3 + 6e_1 - 72c_3e_1 + 72e_1^2 - 1).\nn\eea
As for the derivative, one can use directly \eqref{flow_eqn}, \eqref{sol_y}. Since $\wp(\tf/2)=1/12-c_3/y^1(-\tf/2)$ and so
$\wp'(\tf/2)=c_3(y^2-y^3)/y^1\big|_{t=-\tf/2}$. But at $-\tf/2$, one has $y^1=y^3$ and $y^2=1-2y^1$
\bea \wp'(\tf/2)=\frac{c_3}{y^1}(1-3y^1)=\frac{c_3}{y^1}-3c_3=\frac{1}{12}-\wp(\tf/2)-3c_3.\nn\eea

The calculation for $\wp(\go_2+\tf)$ unfolds by using the addition formula on $\go_2+\tf$, while for $\wp'(\go_2+\tf)$ we have
$\wp'(\go_2+\tf)=c_3(y^2-y^3)/y^1\big|_{t=\go_2}$. At $\go_2$, one has $y^1=y^2$ and $y^3=1-2y^1$
\bea \wp'(\go_2+\tf)=\frac{c_3}{y^1}(3y^1-1)=-\frac{c_3}{y^1}+3c_3=-\frac{1}{12}+\wp(\go_2+\tf)+3c_3.\nn\eea
\end{proof}

\begin{lemma}\label{lem_special_point}
  Assuming $\eqref{g23_ours}$, then
  \bea (1/12-\wp(\tf/2))^2(1/12-\wp(\go_1))=c_3^2.\nn\eea
\end{lemma}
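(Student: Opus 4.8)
The plan is to reduce the claim to a polynomial identity in the half-period value $e_1=\wp(\go_1)$ and to settle that identity using the cubic $4x^3-g_2x-g_3$ whose roots are the $e_i$, together with the closed form supplied by lem.\ref{lem_sixth}.

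First I would use lem.\ref{lem_sixth} to turn the first factor into a quadratic in $e_1$. Since $\wp(\go_1)=e_1$ by \eqref{half_period_value},
\bea \frac1{12}-\wp(\tf/2)=\frac{1}{36c_3}\big(1-24c_3-6e_1+72c_3e_1-72e_1^2\big)=:\frac{N(e_1)}{36c_3},\nn\eea
so the left-hand side of the lemma equals $N(e_1)^2(\frac1{12}-e_1)/(1296\,c_3^2)$, and the statement becomes the purely algebraic identity
\bea N(e_1)^2\Big(\frac1{12}-e_1\Big)=1296\,c_3^4.\nn\eea

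Next I would treat the left-hand side as a degree-five polynomial in $e_1$ and reduce it modulo the cubic. Because $e_1$ is a root of $4x^3-g_2x-g_3$ one has $4e_1^3=g_2e_1+g_3$ with $g_2,g_3$ given by \eqref{g23_ours}; substituting this relation (most efficiently by first reducing $N(e_1)^2$ to degree two, then multiplying by $\frac1{12}-e_1$ and reducing once more) collapses $N(e_1)^2(\frac1{12}-e_1)-1296\,c_3^4$ to a remainder of degree at most two in $e_1$, and the claim is that this remainder is identically zero. Carrying out this cancellation, which depends crucially on the special values \eqref{g23_ours}, is the only genuine computation and the place where arithmetic slips are the real danger.

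The reason the remainder must vanish --- and the structural guide I would keep in mind while computing --- is the elliptic-function argument of the remark following the solution of the flow. The product $(\frac1{12}-\wp(z))(\frac1{12}-\wp(z-\tf))(\frac1{12}-\wp(z+\tf))$ is doubly periodic with no poles, since the double pole of the first factor at the lattice is cancelled by the simple zeros of the other two (recall $\wp(\tf)=\frac1{12}$ by lem.\ref{lem_1/12}), and it has no zeros either, so by Liouville it is constant. Evaluating at $z=\go_1$ and using evenness together with $2\go_1$-periodicity, namely $\go_1-\tf=\tf/2$ and $\wp(\go_1+\tf)=\wp(\tf/2)$, collapses the three factors into $(\frac1{12}-\wp(\tf/2))^2(\frac1{12}-e_1)$. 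The same evaluation at $z=\go_2$ (covered by lem.\ref{lem_sixth}) and at $z=\go_3$ produces the analogous combinations in $e_2$ and $e_3$, so the constancy forces the degree-two remainder to vanish at the three distinct half-period values, hence identically; this both explains the cancellation and furnishes a cross-check. The main obstacle is therefore not conceptual but the careful bookkeeping of the degree-five reduction.
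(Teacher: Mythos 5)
Your proposal is correct and is essentially the paper's proof: the paper likewise obtains the identity by inserting the explicit value of $\wp(\tf/2)$ from lem.\ref{lem_sixth} and performing the direct calculation, which in your formulation is the reduction of $N(e_1)^2(\tfrac1{12}-e_1)$ modulo $4x^3-g_2x-g_3$ (and the reduction does close, with remainder exactly $1296\,c_3^4$). One small caveat on your cross-check: the Liouville constancy only forces the degree-$\le 2$ remainder to take the \emph{same} value $1296\,c_3^2(C-c_3^2)$ at $e_1,e_2,e_3$ (with $C$ the a priori unknown constant of the triple product), i.e.\ to be a constant polynomial rather than the zero polynomial, so it cannot by itself establish the vanishing --- the explicit cancellation you defer to is genuinely needed and is exactly the paper's ``direct calculation''.
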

\begin{proof}
This follows from the explicit values of $\wp(\tf/2)$ from lem.\ref{lem_sixth} and a direct calculation.
\end{proof}

\subsection{The Weierstrass $\zeta$ function}
The $\zeta$ function is defined as
\bea &\displaystyle{ \zeta(z)=\frac{1}{z}+\int_0^z(\frac{1}{z^2}-\wp(z))dz},\nn\\
&\displaystyle{ \wp'(z)=-\zeta(z)}.\label{wp_zeta}\eea
This definition is independent of the choice of the integral path, since $\wp$ has only double pole with zero residue at $z=0\mod2\Go$.

Unlike $\wp$, the function $\zeta$ is an odd function, and it is \emph{not} periodic.
One has instead (see thm.6.1 \cite{du_val_1973})
\bea &\zeta(z+2\go_1)=\zeta(z)+2\eta_1,~~\zeta(z+2\go_2)=\zeta(z)+2\eta_2,\nn\\
&\eta_1\go_2-\eta_2\go_1=i\pi/2.\label{pairing}\eea

\begin{lemma}\label{lem_spec_value_zeta}
  We have the value of $\zeta$ at some special points
  \bea & \displaystyle{\zeta(\tf)=\frac16+\frac23\eta_1,~~\zeta(2\tf)=-\frac16+\frac43\eta_1}\nn\\
   & \displaystyle{\zeta(\go_2+\tf)=-\frac{1}{c_3}(2e_2^2+e_2/6+c_3-1/36)+\eta_2+\frac16+\frac23\eta_1},\nn\eea
   where $\eta_{1,2}=\zeta(\go_{1,2})$.
\end{lemma}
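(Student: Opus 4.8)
The plan is to reduce all three values to the addition theorem for the Weierstrass $\zeta$ function, together with the quasi-periodicity \eqref{pairing}, oddness of $\zeta$, and the special data $\wp(\tf)=1/12$, $\wp'(\tf)=-c_3$ from lem.\ref{lem_1/12}. The addition theorem $\zeta(u+v)=\zeta(u)+\zeta(v)+\tfrac12\tfrac{\wp'(u)-\wp'(v)}{\wp(u)-\wp(v)}$ is not recorded separately above, but I would derive it from the inversion formula used in lem.\ref{lem_inv} by writing that identity once for the pair $(u,v)$ and once for $(v,u)$ and adding, using $\zeta(v-u)=-\zeta(u-v)$; its coincidence limit $v\to u$ gives the duplication formula $\zeta(2u)=2\zeta(u)+\tfrac12\,\wp''(u)/\wp'(u)$. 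I also need $\wp''(\tf)$, obtained by differentiating $(\wp')^2=4\wp^3-g_2\wp-g_3$ to get $\wp''=6\wp^2-g_2/2$, so that at $\tf$ one has $\wp''(\tf)=6(1/12)^2-\tfrac12(1/12-2c_3)=c_3$.

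For $\zeta(\tf)$ and $\zeta(2\tf)$ the key observation is the rational relation $3\tf=2\go_1$. First, oddness and quasi-periodicity give $\zeta(2\tf)=\zeta(2\go_1-\tf)=-\zeta(\tf)+2\eta_1$. Second, the duplication formula at $u=\tf$ gives $\zeta(2\tf)=2\zeta(\tf)+\tfrac12\,\wp''(\tf)/\wp'(\tf)=2\zeta(\tf)+\tfrac12(c_3)/(-c_3)=2\zeta(\tf)-\tfrac12$. Equating the two expressions for $\zeta(2\tf)$ yields a single linear equation $3\zeta(\tf)=2\eta_1+\tfrac12$, hence $\zeta(\tf)=\tfrac16+\tfrac23\eta_1$; substituting back gives $\zeta(2\tf)=-\tfrac16+\tfrac43\eta_1$.

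For $\zeta(\go_2+\tf)$ I apply the addition theorem with $u=\go_2$, $v=\tf$. Using $\zeta(\go_2)=\eta_2$, $\wp(\go_2)=e_2$, $\wp'(\go_2)=0$, together with $\wp(\tf)=1/12$, $\wp'(\tf)=-c_3$ and the value of $\zeta(\tf)$ just found, this gives $\zeta(\go_2+\tf)=\eta_2+\tfrac16+\tfrac23\eta_1+\tfrac12\,c_3/(e_2-1/12)$. It then remains to rewrite the rational term; invoking the field-theory reduction $1/(1/12-e_i)=\tfrac{4}{c_3^2}(e_i^2+e_i/12+c_3/2-1/72)$ established in the course of lem.\ref{lem_sixth}, I obtain $\tfrac12\,c_3/(e_2-1/12)=-\tfrac1{c_3}(2e_2^2+e_2/6+c_3-1/36)$, which is exactly the claimed expression.

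The computation is essentially mechanical once the right identities are in place; the only genuine subtlety is closing the system for the first two values. Note that feeding the inversion formula with the half-period $\go_2$ (where $\wp'(\go_2)=0$) merely reproduces quasi-periodicity and yields no new information, so one must pair against $\tf$, where $\wp'$ is nonzero, and exploit $3\tf=2\go_1$ to produce a second, independent relation. The final step is a short algebraic reduction modulo the cubic $4x^3-g_2x-g_3$ satisfied by $e_2$, which is already packaged in lem.\ref{lem_sixth}, so I expect no real obstacle there.
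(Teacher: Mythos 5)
Your proof is correct and follows essentially the same route as the paper: the duplication formula combined with $2\tf=2\go_1-\tf$ and oddness to pin down $\zeta(\tf)$ and $\zeta(2\tf)$, then the addition theorem at $\go_2+\tf$ with the algebraic reduction of $1/(e_2-1/12)$ modulo the cubic. The only difference is that you spell out the derivation of the addition/duplication formulas and the value $\wp''(\tf)=c_3$, which the paper takes as known.
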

\begin{proof}
To compute $\zeta(\tf)$, we use the duplication formula
\bea \zeta(2\tf)-2\zeta(\tf)=\frac12\frac{\wp''(\tf)}{\wp'(\tf)}=-\frac12\nn\eea
where we computed $\wp''$ by differentiating $(\wp')^2=4\wp^3-g_2\wp-g_3$:
\bea 2\wp'\wp''=12\wp^2\wp'-g_2\wp'~\To~\wp''=6\wp^2-\frac{1}{2}g_2.\nn\eea
But since $2\tf=2\go_1-\tf$, one gets $\zeta(2\tf)=2\eta_1+\zeta(-\tf)=2\eta_1-\zeta(\tf)$ as $\zeta$ is odd. Thus
$(2\eta_1-\zeta(\tf))-2\zeta(\tf)=-1/2$.

For the last statement, use again the addition formula
  \bea \zeta(\go_2+\tf)=\frac{1}{2}\frac{c_3}{e_2-1/12}+\eta_2+\zeta(\tf)=-\frac{2}{c_3}(e_2^2+e_2/12+c_3/2-1/72)+\eta_2+\frac16+\frac23\eta_1.\nn\eea
\end{proof}
\begin{definition}(54.1 \cite{du_val_1973})
  Define a new function
  \bea \zeta^*(z)=\zeta(z)-\frac{\eta_1}{\go_1}z.\nn\eea
  This is odd and periodic along the real axis.
\end{definition}
\begin{definition}\label{def_vgs}
  The function
  \bea \varsigma(z)=\zeta(\go_2+\frac{z\go_1}{\pi})-\eta_2-\frac{z}{\pi}\eta_1.\nn\eea
  is real of period $2\pi$ for real $z$. It satisfies the shift relations
  \bea \vgs(z+\frac{2\pi}{3})=\vgs(z)+y^2(t)-\frac13,~~~\vgs(z-\frac{2\pi}{3})=\vgs(z)-y^1(t)+\frac13,~~~t:=\go_2+\frac{z\go_1}{\pi}\nn.\eea
\end{definition}
\begin{proof}
  Using the addition formula
  \bea \zeta(\frac{z\go_1}{\pi})=\frac12\frac{\wp'(\go_2+\frac{z\go_1}{\pi})+\wp'(\go_2)}
  {\wp(\go_2+\frac{z\go_1}{\pi})-\wp(\go_2)}+\zeta(\go_2+\frac{z\go_1}{\pi})-\zeta(\go_2)\nn\eea
  where the lhs and the first term on the rhs are real for real $z$, besides $\eta_1$ is also real, so the reality of $\vgs$ follows.
  The periodicity follows from that of $\zeta^*$ defined earlier.

  For the shift we only show one of them,
  \bea &&\zeta(t+\tf)=\frac{1}{2}\frac{\wp'(t)+c_3}{\wp(t)-1/12}+\zeta(t)+\zeta(\tf)=-\frac{1}{2}(y^1-y^2+y^3)+\zeta(t)+\zeta(\tf)\nn\\
  \To&&\vgs(z+\frac{2\pi}{3})-\vgs(z)=y^2(t)-\frac13.\nn\eea
\end{proof}

\subsection{The Weierstrass $\gs$ function and theta function}
Since $\zeta$ has a simple pole at $0\mod 2\Go$, if one integrates $\zeta$, one gets a branching behaviour similar to that of a log.
Indeed $\int\zeta$ is the logarithm of an analytic function
\bea \gs(z)=z\exp\int_0^z(\zeta(z)-\frac{1}{z})dz.\label{sigma_zeta}\eea
It follows trivially that
\bea \frac{\gs'(z)}{\gs(z)}=\zeta(z).\label{sigma_zeta_I}\eea
Clearly $\gs$ is not periodic
\bea \frac{\gs(z+p\go_1+q\go_2)}{\gs(z)}=(-1)^{pq+p+q}e^{2(p\eta_1+q\eta_2)(z+p\go_1+q\go_2)}.\nn\eea
In fact, from this non-periodicity, one can relate $\gs$ function to the more familiar theta functions according to 18.10.8 \cite{abramowitz2012handbook}
\bea &\displaystyle { \gs(z)=\frac{2\go_1}{\pi}\exp(\frac{\eta_1z^2}{2\go_1})\frac{\vgt_1(\pi z/2\go_1)}{\vgt'_1(0)} },\label{sigma_theta}\\
&\displaystyle { \vgt_1(z)=\vgt_1(z;\tau)=2q^{1/4}\sum_{n\geq0}(-1)^nq^{n(n+1)}\sin(2n+1)z,~~q=e^{i\pi\tau},~~\tau=\go_2/\go_1},~~\Gd>0.\nn\eea
The $\vgt$ function is the solution to the heat equation with $\tau$ serving as the time, and so one has
the important property
\bea -i\pi\frac{\partial^2\vgt_1(z;\tau)}{\partial z^2}=4\frac{\partial\vgt_1(z;\tau)}{\partial\tau}\nn\eea
which helps us to compute the derivation of $\gs$ function with respect to the modular parameter $\tau$, and also $g_2,g_3$.

\subsection{Derivation with respect to $g_{2,3}$}
From the relation between $\gs$ and $\vgt_1$ plus the heat equation satisfied by $\vgt_1$, one can derive
\begin{lemma}(18.6.23-18.6.24 \cite{abramowitz2012handbook})
The derivative of $\gs$ with respect to $g_{2,3}$ reads
\bea
&&\frac{\partial\log\gs}{\partial g_2}=\frac{1}{16(g_2^3-27g_3^2)}(4g_2^2(z\zeta-1)+36g_3(\wp-\zeta^2)-3g_2g_3z^2),\nn\\
&&\frac{\partial\log\gs}{\partial g_3}=\frac{1}{8(g_2^3-27g_3^2)}(z^2g_2^2+12\zeta^2g_2-12\wp g_2-36g_3(z\zeta-1)).\nn\eea
The same expression for $\wp,\zeta$ can be obtained by taking derivatives and using \eqref{wp_zeta} and \eqref{sigma_zeta}.
\end{lemma}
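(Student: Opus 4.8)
The plan is to regard $\log\gs$ as a function of $z$ together with two independent lattice moduli, for which I take an overall scale $\go_1$ and the modular ratio $\tau=\go_2/\go_1$, and to convert the $g_{2,3}$-derivatives into derivatives along these two directions. The bridge to explicit elliptic functions is the representation \eqref{sigma_theta}, which I write as $\log\gs(z)=\log(2\go_1/\pi)+\tfrac{\eta_1 z^2}{2\go_1}+\log\vgt_1(v;\tau)-\log\vgt_1'(0;\tau)$ with $v=\pi z/2\go_1$. The only transcendental $\tau$-dependence sits in $\vgt_1$, and the heat equation $-i\pi\,\partial_v^2\vgt_1=4\,\partial_\tau\vgt_1$ lets me trade the modular derivative $\partial_\tau\log\vgt_1$ for $\partial_v^2\log\vgt_1$. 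Since $\partial_v^2\log\vgt_1$ reassembles, after accounting for the $\eta_1 z^2/2\go_1$ prefactor, into $-\wp$ together with the $\eta_1/\go_1$ shift, the modular-direction derivative of $\log\gs$ comes out as an explicit combination of $\wp,\zeta^2,z\zeta$ and $z^2$.

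The computation rests on two linear relations for the unknowns $A:=\partial_{g_2}\log\gs$ and $B:=\partial_{g_3}\log\gs$. The first is the Euler (homogeneity) relation: under $z\to\mu z$, $g_2\to\mu^{-4}g_2$, $g_3\to\mu^{-6}g_3$ one has $\gs\to\mu\gs$, so differentiating at $\mu=1$ and using $\partial_z\log\gs=\zeta$ gives $z\zeta-4g_2 A-6g_3 B=1$, i.e. $4g_2 A+6g_3 B=z\zeta-1$. As a consistency check, forming $4g_2 A+6g_3 B$ from the two claimed formulas, the $\wp$, $\zeta^2$ and $z^2$ terms cancel and the surviving numerator $4(g_2^3-27g_3^2)(z\zeta-1)$ cancels the discriminant in the denominator, returning $z\zeta-1$. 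The second relation is the modular-direction derivative described above; together with the scaling relation it forms a $2\times2$ system once I express the directional operators $\go_1\partial_{\go_1}|_\tau$ and $\partial_\tau|_{\go_1}$ in the $(g_2,g_3)$ basis.

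To invert I need the Jacobian $\partial(g_2,g_3)/\partial(\go_1,\tau)$. From $g_2=\go_1^{-4}G_2(\tau)$ and $g_3=\go_1^{-6}G_3(\tau)$ the scaling entries are $\go_1\partial_{\go_1}g_2=-4g_2$, $\go_1\partial_{\go_1}g_3=-6g_3$, while $\partial_\tau g_{2,3}$ are the classical Ramanujan derivatives, expressible through $g_2,g_3$ and the quasimodular combination $s_2:=\eta_1/\go_1$. The determinant of this Jacobian is proportional to the discriminant $\Gd=g_2^3-27g_3^2$, which is exactly why $\Gd$ appears in the denominators. Solving the $2\times2$ system for $A$ and $B$ yields the two stated formulas. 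The $\wp$ and $\zeta$ versions then follow by differentiating in $z$: since $\zeta=\partial_z\log\gs$ and $\wp=-\partial_z\zeta$, one has $\partial_{g_i}\zeta=\partial_z(\partial_{g_i}\log\gs)$ and $\partial_{g_i}\wp=-\partial_z^2(\partial_{g_i}\log\gs)$, differentiating the right-hand sides term by term with \eqref{wp_zeta}.

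The main obstacle is the bookkeeping of the weight-two quasimodular anomaly $s_2=\eta_1/\go_1$ (essentially $E_2$): it enters both the prefactor $\eta_1 z^2/2\go_1$ of $\log\gs$ and the derivatives $\partial_\tau g_{2,3}$, and its own variation is not modular. The delicate point is that all $s_2$-dependent and non-elliptic pieces must cancel between the heat-equation term and the Jacobian entries, leaving only the clean combinations of $\wp,\zeta,z$ with coefficients rational in $g_2,g_3$ over $\Gd$. Carrying out this cancellation, rather than any single derivative, is where the real work lies; an alternative that sidesteps the modular forms is to verify the formulas directly by checking that $\partial_{g_i}\wp$, obtained by differentiating $(\wp')^2=4\wp^3-g_2\wp-g_3$ in $g_i$ at fixed $z$, solves the resulting first-order linear ODE in $z$ with the correct double periodicity and pole structure.
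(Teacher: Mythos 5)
Your outline is correct and follows exactly the route the paper indicates for this lemma, namely the $\gs$--$\vgt_1$ relation \eqref{sigma_theta} combined with the heat equation; the paper itself does not carry out the computation but simply cites Abramowitz--Stegun 18.6.23--18.6.24, so your Euler-homogeneity relation $4g_2\,\partial_{g_2}\log\gs+6g_3\,\partial_{g_3}\log\gs=z\zeta-1$ together with the modular-direction derivative and the Jacobian inversion is a sound (and verifiable, as your consistency check on the discriminant shows) way to fill in that derivation. The only caveat is that the quasimodular bookkeeping you defer is genuinely the bulk of the work, but your fallback of checking the induced first-order ODE for $\partial_{g_i}\wp$ against $(\wp')^2=4\wp^3-g_2\wp-g_3$ is a legitimate independent verification.
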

In our situation $g_{2,3}$ depend only on $c_3$, and $\partial_{c_3}=(-2\partial_{g_2}+(1/6-2c_3)\partial_{g_3})$. We record for the readers convenience
\bea \partial_{c_3}\log\gs(z)&=&
-\frac{c_3^2}{96(g_2^3-27g_3^2)}(144\wp + (24c_3-1)z^2 - 144\zeta^2 + (864c_3-24)(\zeta z-1)),\nn\\
\partial_{c_3}\wp(z)
&=&\frac{c_3^2}{12(g_2^3-27g_3^2)}\big(6\wp(1-36c_3) + 3\wp'(12\zeta +z - 36c_3z)  + 72\wp^2 + 24c_3  - 1\big),\label{used_VI}\\
\partial_{c_3}\zeta(z)&=&-\frac{c_3^2}{48(g_2^3-27g_3^2)}(72\wp'+(\zeta-\wp z)(432c_3-12) + 144\wp\zeta + (24c_3-1)z ).\nn\eea
where $g_2^3-27g_3^2=c_3^3(1-27c_3)$.
\begin{lemma}
 We have
  \bea \partial_{c_3}\left[
                       \begin{array}{c}
                         \eta_i \\
                         \go_i \\
                       \end{array}\right]=\frac{1}{c_3(1-27c_3)}\left[
                       \begin{array}{cc}
                         \frac14(1-36c_3) & \frac{1}{48}(1-24c_3) \\
                         -3 & -\frac14(1-36c_3)\\
                       \end{array}\right]\left[
                       \begin{array}{c}
                         \eta_i \\
                         \go_i \\
                       \end{array}\right],~~i=1,2\label{used_VII}\eea
\end{lemma}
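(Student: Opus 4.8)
The plan is to extract $\partial_{c_3}\go_i$ and $\partial_{c_3}\eta_i$ from the quasi-periodicity relation \eqref{pairing}, $\zeta(z+2\go_i)=\zeta(z)+2\eta_i$, by differentiating it in $c_3$ and feeding in the already-established pointwise formula \eqref{used_VI} for $\partial_{c_3}\zeta(z)$. Since both the function $\zeta$ and the evaluation point $\go_i$ depend on $c_3$, I would first write, using $\zeta'=-\wp$ together with the $2\go_i$-periodicity of $\wp$,
\[
\partial_{c_3}\big[\zeta(z+2\go_i)\big]=(\partial_{c_3}\zeta)(z+2\go_i)-2\wp(z)\,\partial_{c_3}\go_i,
\]
so that differentiating \eqref{pairing} produces the master identity
\[
(\partial_{c_3}\zeta)(z+2\go_i)-(\partial_{c_3}\zeta)(z)=2\wp(z)\,\partial_{c_3}\go_i+2\,\partial_{c_3}\eta_i,
\]
which must hold for \emph{all} $z$.

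Next I would evaluate the left-hand side directly from \eqref{used_VI}. Substituting $z\mapsto z+2\go_i$ there and using $\wp(z+2\go_i)=\wp(z)$, $\wp'(z+2\go_i)=\wp'(z)$ and $\zeta(z+2\go_i)=\zeta(z)+2\eta_i$, every bare occurrence of $z$ (from the $\wp z$ and $(24c_3-1)z$ terms) cancels in the difference, which is precisely what makes the argument work. With $\Gd=g_2^3-27g_3^2=c_3^3(1-27c_3)$ the difference collapses to
\[
(\partial_{c_3}\zeta)(z+2\go_i)-(\partial_{c_3}\zeta)(z)=-\tfrac{c_3^2}{48\Gd}\Big(2\wp(z)\big[144\eta_i-(432c_3-12)\go_i\big]+2(432c_3-12)\eta_i+2(24c_3-1)\go_i\Big),
\]
whose every term is either constant or proportional to $\wp(z)$. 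The crucial structural step is then to read the master identity as an identity in $z$ and match coefficients: the coefficient of $\wp(z)$ gives $\partial_{c_3}\go_i$, and the $\wp$-independent part gives $\partial_{c_3}\eta_i$.

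This matching yields $\partial_{c_3}\go_i=-\tfrac{c_3^2}{48\Gd}\big(144\eta_i-(432c_3-12)\go_i\big)$ and $\partial_{c_3}\eta_i=-\tfrac{c_3^2}{48\Gd}\big((432c_3-12)\eta_i+(24c_3-1)\go_i\big)$. Finally I would simplify with $c_3^2/\Gd=1/(c_3(1-27c_3))$ and $(432c_3-12)/48=-\tfrac14(1-36c_3)$, which turns these two expressions into the claimed matrix \eqref{used_VII}; the derivation for $i=1$ and $i=2$ is verbatim identical. I expect the only genuine obstacle to be the bookkeeping in the middle step — correctly isolating the $\wp(z)$-linear part and confirming that the explicit $z$-terms cancel. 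As a consistency check, the matrix in \eqref{used_VII} is traceless, so the Wronskian $\eta_1\go_2-\eta_2\go_1$ of the two solution vectors is conserved, reproducing the Legendre relation $\eta_1\go_2-\eta_2\go_1=i\pi/2$ of \eqref{pairing}.
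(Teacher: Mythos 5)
Your proposal is correct, and it verifiably reproduces both rows of the matrix: matching the coefficient of $\wp(z)$ gives $\partial_{c_3}\go_i=\tfrac{1}{c_3(1-27c_3)}\bigl(-3\eta_i-\tfrac14(1-36c_3)\go_i\bigr)$ and the constant part gives $\partial_{c_3}\eta_i=\tfrac{1}{c_3(1-27c_3)}\bigl(\tfrac14(1-36c_3)\eta_i+\tfrac1{48}(1-24c_3)\go_i\bigr)$, exactly \eqref{used_VII}. However, your route is genuinely different from the paper's. The paper does not differentiate the quasi-periodicity relation; instead it differentiates a collection of \emph{special-value} identities: $\wp(\tf)=\tfrac1{12}$ with $\tf=\tfrac23\go_1$ (giving $\partial_{c_3}\go_1$ via $\partial_{c_3}\wp|_{\tf}+\wp'(\tf)\,\partial_{c_3}\tf=0$ and \eqref{used_VI}), then $\zeta(\tf)=\tfrac16+\tfrac23\eta_1$ for $\eta_1$, and $\wp(\go_2)=e_2$ together with the expression for $\zeta(\go_2+\tf)$ in lem.\ref{lem_spec_value_zeta} for the $i=2$ case. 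Your argument has two advantages: it treats $i=1$ and $i=2$ on an identical footing (the paper's $i=2$ case needs extra input, including knowledge of $\partial_{c_3}e_2$), and it needs no special values at all --- only the $2\Go$-periodicity of $\wp,\wp'$, the quasi-periodicity of $\zeta$, and the linear independence of $1$ and $\wp(z)$, which is what legitimises the coefficient matching. The paper's approach, on the other hand, reuses quantities ($\wp(\tf)$, $\zeta(\tf)$) it has already computed for other purposes and only requires the $\partial_{c_3}\wp$ and $\partial_{c_3}\zeta$ formulae at isolated points rather than as identities in $z$. Both proofs rest on the same essential input, namely the $g_{2,3}$-derivative formulae \eqref{used_VI}, and your closing observation that the tracelessness of the matrix preserves the Legendre relation \eqref{pairing} is the same consistency check the paper records after the lemma.
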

Note the matrix above is traceless and so it will preserve the condition \eqref{pairing}.
\begin{proof}
  For $\go_1$, since $\tf=2/3\go_1$, we can differentiate $\wp(\tf)=1/12$ and get $\partial_{c_3}\wp|_{t=\tf}+\wp'(\tf)\partial_{c_3}\tf=0$, then we get from \eqref{used_VI}
  \bea \partial_{c_3}\tf=\frac{1}{c_3}\partial_{c_3}\wp|_{t=\tf}
  =\frac{c_3}{12(g_2^3-27g_3^2)}\big(\frac12(1-36c_3) - 3c_3(12\zeta(\tf) +\tf - 36c_3\tf)  + \frac12 + 24c_3  - 1\big)\nn\eea
  we get the result.

  For $\eta_1$, one can differentiate $\zeta(\tf)=1/6+2/3\eta_1$, while for $\go_2$ and $\eta$ one differentiates $\wp(\go_2)=e_2$ and the second equation of lem.\ref{lem_spec_value_zeta}.
\end{proof}
One can define the combination
\bea \tilde \eta_i=\eta_i+\go_i(\frac{1}{12}-3c_3),\label{eta_tilde}\eea
then \eqref{used_VII} reads
\bea \partial_{c_3}\tilde\eta_i=-2\go_i,~~\partial_{c_3}\go_i=\frac{-3\tilde\eta_i}{c_3(1-27c_3)}.\label{ode_periods}\eea
In particular $\go_1$ satisfies $\partial_{c_3}(c_3(1-27c_3)\partial_{c_3}\go_1)=6\go_1$
and the solution is given by the hyper-geometric functions \cite{clemensscrapbook}.
We list the values of $\tilde \eta_{1,2}$ and $\go_{1,2}$ at $c_3=0,1/27$ for completeness
\bea c_3=0,~~\go_1=\infty,~~\go_2=i\pi,~~\tilde\eta_1=\frac12,~~\eta_2=-\frac{i\pi}{12},\label{lim_value_1}\\
c_3=\frac{1}{27},~~\go_1=\sqrt3\pi,~~\go_2=i\infty,~~\eta_1=\frac{\sqrt{3}\pi}{36},~~\tilde\eta_2=-\frac{i\sqrt3}{6}.\label{lim_value_2}\eea
If we write $c_3=\ep$ then $\go_1\sim \ep^{-1/2}$ in the first line, or if we write $1/27-c_3=\ep$ then $\go_2\sim i\ep^{-1/4}$ for the second line.

Finally we compute the $c_3$ derivative of the functions $y^{1,2,3}$. We have demonstrated in the main text that, in order to exploit the symmetry between $y^{1,2,3}$, we should use the $c_3$ and $t$ coordinate to parametrise the contour in fig.\ref{fig_contour_CP2}. In particular we introduce the time coordinate $s$
\bea t=\go_2+\frac{s\go_1}{\pi},\label{used_VIII}\eea
so that $s\in\BB{R}$ and is of period $2\pi$ regardless of $c_3$.
\begin{lemma}\label{lem_partial_c_3}
  Holding $s$ in \eqref{used_VIII} fixed, and differentiate with respect to $c_3$
  \bea
  \partial_{c_3}y^i=\frac{1}{c_3(1-27c_3)}\bigg\{\frac12((y^i)^2-y^i)-9c_3y^i+6c_3
+3y^i(y^{i+1}-y^{i+2})\vgs(s+(i-3)\frac{2\pi}{3})\bigg\},\nn\eea
where the index $i$ is taken mod 3 and $\vgs(s)$ is given in def.\ref{def_vgs}.
\end{lemma}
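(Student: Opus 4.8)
The plan is to treat $y^i$ as $y^i=c_3/(\tfrac{1}{12}-\wp(z_i))$ with argument $z_i=\go_2+\tfrac{\go_1}{\pi}(s+\tfrac{2i\pi}{3})$, and to differentiate in $c_3$ with $s$ held fixed. Since both the half-periods $\go_{1,2}$ (hence $z_i$) and the Weierstrass data $g_{2,3}$ depend on $c_3$, the chain rule gives the total derivative $\tfrac{d}{dc_3}\wp(z_i)=(\partial_{c_3}\wp)\big|_{z_i}+\wp'(z_i)\,\partial_{c_3}z_i$, where the first piece is the explicit $c_3$-derivative \eqref{used_VI} and the second accounts for the motion of the argument through the periods.

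First I would record two input identities. Inverting the definition gives $\wp(z_i)=\tfrac{1}{12}-c_3/y^i$, and differentiating $y^i(t)$ in the flow time $t$ together with the flow equation \eqref{flow_eqn} yields $\wp'(z_i)=c_3(y^{i+1}-y^{i+2})/y^i$ with indices mod $3$. For the argument I would use \eqref{ode_periods}, namely $\partial_{c_3}\go_j=-3\tilde\eta_j/(c_3(1-27c_3))$, to obtain $\partial_{c_3}z_i=-\tfrac{3}{c_3(1-27c_3)}(\tilde\eta_2+\tfrac{w_i}{\pi}\tilde\eta_1)$ with $w_i:=s+\tfrac{2i\pi}{3}$.

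The heart of the computation is to show that the transcendental pieces organise into $\vgs$. Writing $\tfrac{c_3^2}{12(g_2^3-27g_3^2)}=\tfrac{1}{12c_3(1-27c_3)}$ in \eqref{used_VI}, the $\zeta$-term $3\wp'\cdot 12\zeta(z_i)$ combines with $\wp'(z_i)\,\partial_{c_3}z_i$ to produce $\tfrac{3\wp'(z_i)}{c_3(1-27c_3)}\big[\zeta(z_i)-\tilde\eta_2-\tfrac{w_i}{\pi}\tilde\eta_1\big]$. Using $\tilde\eta_j=\eta_j+\go_j(\tfrac{1}{12}-3c_3)$ from \eqref{eta_tilde} and the definition of $\vgs$ in def.\ref{def_vgs}, the bracket equals $\vgs(w_i)-(\tfrac{1}{12}-3c_3)z_i$. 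I expect the explicit linear-in-$z_i$ term inside \eqref{used_VI}, namely $3\wp'(z_i-36c_3z_i)$, to cancel exactly against the leftover $-(\tfrac{1}{12}-3c_3)z_i$ contribution, since $3(\tfrac{1}{12}-3c_3)=\tfrac{1}{4}(1-36c_3)$. This cancellation is the main obstacle and the step I would verify most carefully; once it holds, all dependence on the bare argument $z_i$ disappears and only $\vgs(w_i)$ survives in the transcendental part.

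What remains is purely algebraic. After the cancellation the total derivative reads $\tfrac{d}{dc_3}\wp(z_i)=\tfrac{1}{12c_3(1-27c_3)}(6\wp(1-36c_3)+72\wp^2+24c_3-1)+\tfrac{3\wp'(z_i)\vgs(w_i)}{c_3(1-27c_3)}$. I would then assemble $\partial_{c_3}y^i=\tfrac{y^i}{c_3}+\tfrac{(y^i)^2}{c_3}\tfrac{d}{dc_3}\wp(z_i)$, substitute $\wp(z_i)=\tfrac{1}{12}-c_3/y^i$ and $\wp'(z_i)=c_3(y^{i+1}-y^{i+2})/y^i$, and expand. The rational part collapses to $\tfrac{1}{c_3(1-27c_3)}(\tfrac12((y^i)^2-y^i)-9c_3y^i+6c_3)$ once the constant and $c_3^2$-terms combine, while the $\vgs$-part becomes $\tfrac{3y^i(y^{i+1}-y^{i+2})}{c_3(1-27c_3)}\vgs(w_i)$. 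Finally, the $2\pi$-periodicity of $\vgs$ lets me rewrite $\vgs(w_i)=\vgs(s+\tfrac{2i\pi}{3})=\vgs(s+(i-3)\tfrac{2\pi}{3})$, which yields exactly the stated formula.
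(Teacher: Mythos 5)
Your proposal is correct and follows essentially the same route as the paper: split the total $c_3$-derivative into the explicit dependence through $g_{2,3}$ (via \eqref{used_VI}) and the implicit dependence through the argument (via the period derivatives), observe that the bare linear-in-$z$ terms cancel so only $\vgs$ survives, and then substitute $\wp=\tfrac{1}{12}-c_3/y^i$, $\wp'=c_3(y^{i+1}-y^{i+2})/y^i$ to collapse the rational part. Your packaging of the cancellation via $\tilde\eta_j=\eta_j+\go_j(\tfrac1{12}-3c_3)$ is just a tidier bookkeeping of the same cancellation the paper exhibits between the $t(1-36c_3)$ terms.
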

By using the shift relation in def.\ref{def_vgs}, one can check that $\partial_{c_3}(y^1+y^2+y^3)=0$ and $\partial_{c_3}(y^1y^2y^3)=1$.
\begin{proof}
  We hold $s$ fixed when taking the partial derivative wrt $c_3$.
  \bea \partial_{c_3}(y^3(t))=\partial_{c_3}\frac{c_3}{1/12-\wp(t)}+\dot y^3(t)\partial_{c_3}t
  =\frac{y^3(t)}{c_3}+\frac{c_3}{(1/12-\wp)^2}\partial_{c_3}\wp\big|_{t=\go_2+s\go_1/\pi}+y^3(y^1-y^2)\partial_{c_3}t\nn\eea
  One plugs in \eqref{used_VI} for $\partial_{c_3}\wp$ and \eqref{used_VII} for $\partial_{c_3}\go_i$,
  \bea \partial_{c_3}(y^3(t))&=&\frac{y^3}{c_3}+y^3(y^1-y^2)\partial_{c_3}t\nn\\
  &&+\frac{(1/12-\wp)^{-2}}{12(1-27c_3)}\bigg\{6\wp(t)(1-36c_3)+3y^1y^2(y^1-y^2)(12\zeta(t)+t(1-36c_3))+72\wp(t)^2+24c_3-1\bigg\}\nn\\
&=&\frac{y^3}{c_3}+\frac{y^3(y^1-y^2)}{4c_3(1-27c_3)}\bigg\{-12\eta_2-\go_2(1-36c_3)
  -\frac{s}{\pi}(12\eta_1+\go_1(1-36c_3))\bigg\}\nn\\
&&+\frac{(1/12-\wp)^{-2}}{12(1-27c_3)}
\bigg\{6(\wp-1/12)(3-36c_3)+3y^1y^2(y^1-y^2)(12\zeta(t)+t(1-36c_3))\nn\\
&&+72(\wp(t)-1/12)^2+6c_3\bigg\}\nn\\
&=&\frac{y^3}{c_3}+\frac{y^3(y^1-y^2)}{4c_3(1-27c_3)}\bigg\{-12\eta_2-\cancel{t(1-36c_3)}-\frac{12s}{\pi}\eta_1\bigg\}\nn\\
&&+\frac{1}{c_3(1-27c_3)}\bigg\{-\frac12y^3(3-36c_3)+\frac14y^3(y^1-y^2)(12\zeta(t)+\cancel{t(1-36c_3)})+6c_3+\frac12(y^3)^2\bigg\}\nn\\
&=&\frac{1}{c_3(1-27c_3)}\bigg\{\frac12((y^3)^2-y^3)-9c_3y^3+6c_3
+3y^3(y^1-y^2)(\zeta(\go_2+s\go_1/\pi)-\eta_2-s\eta_1/\pi)\bigg\}.\nn\eea
We replace the last term with the function $\vgs$ given in def.\ref{def_vgs}. The computation for $y^{1,2}$ are similar since they are obtained from $y^3$ by shifting $s\to s\pm 2/3\pi$.
\end{proof}
The Jacobian can be computed as
\bea \frac{\partial{(c_3,s)}}{\partial (y^1,y^2)}=\frac{\pi}{\go_1}.\nn\eea
Finally we perform an amusing calculation of the area of the triangle $\Gd=\{y^{1,2}\in[0,1],\;y^1+y^2\leq 1\}$
\bea \int_{\Gd} dy^1\wedge dy^2
=\int_{\Gd}\frac{\go_1}{\pi}dc_3\wedge ds=2\int_0^{1/27}\go_1dc_3
\stackrel{\eqref{ode_periods}}{=}-\tilde\eta_1\big|_0^{1/27}=\frac12.\nn\eea
This is admittedly a pretentious way of doing some grade school level maths, though a reassuring check of the extensive computations involving elliptic functions.

\bibliographystyle{alpha}

\end{document}